\DeclareMathOperator{\argmin}{arg\,min}
\newcommand{\restr}[1]{\lower3pt\hbox{$|_{#1}$}} 
\newcommand{\cconv}{\overline{\mathrm{co}}}
\def\eps{{\varepsilon}}
\def\N{\mathbb{N}}
\def\la{\lambda}
\def\R{\mathbb{R}}
\def\HH{\mathscr{H}}
\def\CC{\mathcal{C}}
\def\E{\mathcal{E}}
\def\calQ{\mathcal{Q}}
\def\d{\mathsf{d}}
\def\dom{{\mathbb V}}
\newcommand{\be}{\begin{equation}}
\newcommand{\ee}{\end{equation}}
\numberwithin{equation}{section}
\theoremstyle{plain}
\newtheorem{theorem}{Theorem}[section]
\newtheorem{lemma}[theorem]{Lemma}
\newtheorem{corollary}[theorem]{Corollary}
\newtheorem{proposition}[theorem]{Proposition}
\newtheorem{definition}[theorem]{Definition}
\newtheorem{remark}[theorem]{Remark}
\newcommand{\mean}[1]{\,-\hskip-1.08em\int_{#1}} 
\newcommand{\mylabel}[2]{#2\def\@currentlabel{#2}\label{#1}}
\newcommand{\mm}{{\mathfrak m}}
\renewcommand{\d}{\mathrm d}
\newcommand{\sfD}{{\mathsf D}}
\newcommand{\sfQ}{{\mathsf Q}}
\newcommand{\rmL}{{\mathrm L}}
\newcommand{\rmR}{{\mathrm R}}
\newcommand{\rmU}{{\mathrm U}}
\newcommand{\brmU}{\boldsymbol{\mathrm{U}}}
\newcommand{\functional}{\mathscr F}
\newcommand{\FF}{\mathscr F}
\newcommand{\tFF}{\mathscr G}
\newcommand{\GG}{\mathscr K}
\newcommand{\Vmin}{V_{\rm min}}
\newcommand{\llambda}{{\boldsymbol \lambda}}
\newcommand{\ssigma}{{\boldsymbol \sigma}}
\newcommand{\mmu}{{\boldsymbol \mu}}
\newcommand{\uU}{{\boldsymbol U}}
\newcommand{\uu}{{\boldsymbol u}}
\newcommand{\ww}{{\boldsymbol w}}
\newcommand{\Hilb}{{\mathbb H}}
\newcommand{\V}{\dom}
\newcommand{\K}{{\mathbb K}}
\newcommand{\Kc}[1]{\K[#1]}
\newcommand{\J}{J}
\newcommand{\rmB}{{\mathrm B}}
\newcommand{\rmC}{{\mathrm C}}
\newcommand{\supp}{\mathrm{supp}}
\newcommand{\weakto}{\rightharpoonup}
\newcommand{\subH}{\mathrm S}
\newcommand{\Int}{\operatorname{Int}}
\DeclareMathOperator{\Ort}{Ort}
\DeclareMathOperator{\Span}{Span}
\title{$L^2$-Gradient Flows of Spectral Functionals}
\author{Dario Mazzoleni}
\address{Dipartimento di Matematica\\
University of Pavia\\
Via Ferrata 5\\
27100 Pavia, Italy}
\email{dario.mazzoleni@unipv.it}
\author{Giuseppe Savar\'e}
\address{Department of Decision Sciences and BIDSA, Bocconi University\\
Via Roentgen 1\\
20136 Milan, Italy}
\email{giuseppe.savare@unibocconi.it}
\thanks{G.S.
  gratefully acknowledges the support of the Institute of Advanced
  Study of the Technical University of Munich and of 
  IMATI-CNR, Pavia. G.S. has been supported by the MIUR-PRIN 2017
  project \emph{Gradient flows, Optimal Transport and Metric Measure
    Structures.}
  The authors are members of the 
  Gruppo Nazionale per l'Analisi Matematica, la Probabilit\`a
				e le loro Applicazioni (GNAMPA) of the 
				Istituto Nazionale di Alta Matematica (INdAM).
				D.M.~has been partially supported by the INdAM-GNAMPA 2019 project ``Ottimizzazione spettrale non lineare'' and MIUR-PRIN 2020 \emph{Mathematics for Industry 4.0}}
\keywords{Gradient flows, eigenvalue problems, minimizing movements, Schr\"odinger potentials.}
\begin{document}

\begin{abstract}
  We study the $L^2$-gradient flow
  of functionals $\FF$ depending on 
the eigenvalues of Schr\"odinger potentials $V$ for a wide class of
differential operators associated with closed,
symmetric, and coercive bilinear forms,
including the case of all the Dirichlet forms
(such as for second order elliptic operators in Euclidean domains or Riemannian manifolds).

We suppose that
$\FF$ arises as the sum of a 
$(-\theta)$-convex functional $\GG$ with proper domain $\K\subset L^2$, forcing the admissible potentials to stay above a constant $V_{\rm min}$, and a term $\HH(V)=\varphi(\lambda_1(V),\cdots,\lambda_\J(V))$
which depends on the first $\J$ eigenvalues associated with $V$ through a  $\rmC^1$ function $\varphi$.

Even though $\HH$ is not a smooth perturbation of a convex functional (and it
is in fact concave in simple important cases as the sum of the first $\J$
eigenvalues) and we do not assume any compactness of the sublevels of
$\GG$, we prove the convergence of the Minimizing Movement method
to a solution $V\in
H^1(0,T;L^2)$
of the differential inclusion $V'(t)\in -\partial_L^-\FF(V(t))$, which
under suitable compatibility conditions on $\varphi$ can be
written as
\[
  V'(t)+\sum_{i=1}^\J\partial_i\varphi(\la_1(V(t)),\dots, \la_\J(V(t)))u_i^2(t)\in -\partial_F^-\GG(V(t))
\]
where $(u_1(t),\dots, u_\J(t))$ is an orthonormal system of
eigenfunctions
associated with the eigenvalues $(\la_1(V(t)),
,\dots,\la_\J(V(t)))$ and $\partial^-_L$ (resp. $\partial^-_F$) denotes the limiting (resp. Fr\'echet) sub\-differential.
\end{abstract}

\maketitle
{\centering\footnotesize
  \it Dedicated to J.L.~Vazquez in occasion of his 75th birthday \par}

\section{Introduction}
Optimization problems for eigenvalues of elliptic operators have been
a subject of great interest in the last few years, due both to the
possible applications and to the
challenging mathematical questions
arising from these topics.

In particular, 
shape optimization problems for the eigenvalues of the Dirichlet Laplacian have been deeply investigated and
many results concerning existence of optimal shapes in suitable
admissible classes of domains, together with regularity results, have
been proved, see~\cite{henrot2,hpnew} for an overview.

A point of view that has not completely been understood yet for this
class of problems is an evolutionary approach through a gradient 
flow of shapes associated with a functional depending on the
eigenvalues. One of the main issues 
is the choice of the natural
metric driving the evolution and of a corresponding topology well
adapted to shape optimization problems.
In the case of stationary variational problems, 
the best approach in order to prove existence results (see~\cite{bdm})
is to relax the problem in the class of capacitary measures,
i.e.~Borel measures that vanish on sets of zero capacity, where the
$\gamma$-convergence provides a compact topology
sufficiently strong to guarantee the continuity of the eigenvalues of the Dirichlet Laplacian.
In the framework of capacitary measures, a first gradient flow evolution for this problem was proposed by Bucur, Buttazzo and Stefanelli in~\cite{bbs}.
They prove existence of (generalized) Minimizing Movements for a large class of functionals, but they do not characterize explicitly the gradient flow equation. 
A very interesting observation from their work is that, even in cases in which the evolution starts from a ``nice'' shape, then the relaxation in the capacitary measures can actually happen. 
{We also quote the approach of
  \cite{Dogan-Morin-Nochetto-Verani} in shape optimization problems.
}

\paragraph{\bf{Eigenvalue problems associated with Schr\"odinger potentials}}
In the present paper we propose a different approach, and we focus on
the evolution, driven by the $L^2$-metric, of a special class of
capacitary measures, that is, those absolutely continuous with respect
to a given reference measure (such as the Lebesgue measure of $\R^d$).
Even though in the strong $L^2$-framework the driving functionals are not
smooth nor convex and their sublevels are not compact, we are still able to prove that the
Minimizing Movements solve a natural differential inclusion.
Our approach is sufficiently strong to deal with eigenvalues of a wide class of
operators,
not only those of the Dirichlet Laplacian, avoiding the relaxation phenomenon.

In fact, we will 
address the problem in the general setting of a (weakly) coercive, symmetric bilinear form
$\E:\dom\times \dom\to\R$ on a Hilbert space 
$\V$ densely and compactly embedded in
$\Hilb=L^2(\sfD,\mm)$ for a finite measure space $(\sfD,\mm)$.
Since $\E$ is a nonnegative quadratic form we have
\begin{equation}
  \label{eq:1}
  \alpha:=\min_{u\in \V\setminus\{0\}}\frac{\E(u,u)}{\int_\sfD
    u^2\,\d\mm}
  \ge 0.
\end{equation}
We consider a convex subset $\K$ of $L^2(\sfD,\mm)$
whose elements $V$ satisfy the uniform lower bound $V(x)\ge V_{\rm
  min}$ for a fixed constant $\Vmin$.
Given a
(Schr\"odinger) potential
$V\in \K$
(that we can identify with the capacitary measure
$\mu=V \mm$ absolutely continuous with respect to
the reference measure $\mm$),
we can introduce the symmetric bilinear form
\begin{equation}
  \label{eq:52}
  \E_V(u,v):=\E(u,v)+\int_\sfD Vuv\,\d\mm\quad
  D(\E_V):=\Big\{u\in \V: \int_\sfD (V_+)u^2\,\d\mm<+\infty\Big\},
\end{equation}
and we say that $\la$ is an eigenvalue associated with
$V$ if there exists a nonzero eigenfunction $u\in D(\E_V)$
solving 
\begin{equation}
  \label{eq:49}
  \E(u,w)+\int_\sfD V\,uw\,\d\mm=\la\int_\sfD uw\,\d\mm
  \quad\text{for every }w\in D(\E_V).
\end{equation}
When $D(\E_V)=\V$ \eqref{eq:49} corresponds to the weak formulation of
the equation
\begin{equation}
  \label{eq:53}
  \rmL u+Vu=\lambda u\quad\text{in }\sfD,
\end{equation}
where $\rmL$ is the linear selfadjoint operator associated with $\E$.

Since $\V$ is compact in $L^2(\sfD,\mm)$ 
the standard spectral theory 
allows to prove that there exists a sequence
$\llambda(V)=(\lambda_1(V),\cdots,\lambda_k(V),\cdots)$ of eigenvalues satisfying
\begin{equation}
\lambda_{\rm min}
\leq \la_1(V)\leq \dots\leq \la_k(V)\leq \dots,\quad
\lambda_{\rm min}:=\alpha+V_{\rm min},\quad
\lim_{k\to\infty}\lambda_k(V)=+\infty,\label{eq:3}
\end{equation}
and a corresponding sequence $\uu=(u_1,u_2,\cdots,u_k,\cdots)\in \big(D(\E_V)\big)^\N$ of
eigenfunctions which provides an $L^2$-orthonormal basis for
$\overline{D(\E_V)}^{L^2}$.

Optimization problems  for eigenvalues of potentials have been recently treated in~\cite{bgrv}, and they can be used, somehow, to approximate shape optimization problems, as highlighted in~\cite[Example~5.8]{bgrv}. 

Our main structural assumption on $\V,\E,\sfD,\mm$
is that for every choice of positive constants $C,\bar\lambda\in \R^+$
the set of
eigenfunctions satisfying \eqref{eq:49}
for $\lambda\le \bar\lambda$ and $V\in \K$ with $\|V\|_{L^2(\sfD,\mm)}\le C$
is relatively compact in $L^4(\sfD,\mm)$.
This property is always satisfied if, e.g., $\V$ is compactly embedded in
$L^4(\sfD,\mm)$ or $\E$ is a Dirichlet form.

Apart from the (finite-dimensional, but still interesting) case when
$\sfD$ is
a finite set, simple relevant examples covered by our setting are
provided by a bounded Lipschitz open set $\sfD$ of $\R^d$
with the usual Lebesgue measure $\mm=\mathcal L^d$
(or a compact smooth Riemannian manifold
endowed with the Riemannian volume measure)
and
\begin{enumerate}[1.]
\item The Dirichlet (resp.~Neumann) Laplacian $\rmL u=-\Delta u$
  (the Laplace-Beltrami operator in the Riemannian case) corresponding to 
  $\dom=H^1_0(\sfD)$ (resp.~$\dom=H^1(\sfD)$) and
  $\E(u,v)=\int_\sfD\nabla u\cdot \nabla v\,\d x$.
\item The elliptic operator associated with
  the Dirichlet form
  $\E(u,v)=\int_\sfD A(x)\nabla u\cdot \nabla v\,\d x$
  in $H^1_0(\sfD)$ or $H^1(\sfD)$, 
  where $A$ satisfies the usual uniform ellipticity condition $\alpha
  |\xi|^2\le A(x)\xi\cdot \xi\le \alpha^{-1} |\xi|^2$ for some
  $\alpha>0$ and every $x\in
  \sfD,$ $\xi\in \R^d$.
  
\item The fractional Laplacian, for $s\in(0,1)$, with \[
    \E(u,v)=\int_\sfD\int_\sfD\frac{(u(x)-u(y))(v(x)-v(y))}{|x-y|^{d+2s}}
    \,\d x\,
    \d y,
  \]
  where the integral should be read in the principal value sense, and $\dom=H^s(\sfD)$ or $\dom=H^s_0(\sfD)$.
  \item
    The Dirichlet (resp.~Neumann) Bilaplacian corresponding to
    $\dom=H^2_0(\sfD)$ (or $H^2(\sfD)$) and
  $\E(u,v)=\int_\sfD\nabla^2 u: \nabla^2 v\,\d\mm$ in dimension
  $d\le 8$.
\item We can also consider the Dirichlet form induced by a nondegenerate
  Gaussian measure $\mm$ in a separable Hilbert space $\sfD$, see
  e.g.~\cite[Chap.~10]{DaPrato}.
\end{enumerate}
\paragraph{\bfseries $L^2$-gradient flows}
The aim of this paper is to study the $L^2$-gradient flow
\begin{equation}
  \label{eq:4}
  \frac\d{\d t}V(t)\in -\partial_L^-\FF(V(t))\quad t>0,\quad
  V(0)=V_0,
\end{equation}
of
potentials driven by the limiting subdifferential (known also as Mordukhovich subdifferential~\cite{krmo,mor1, mor}) of a functional
$\functional:L^2(\sfD,\mm)\to \R\cup\{+\infty\}$
arising from the sum of two competing terms $\GG$ and $\HH$:
\begin{enumerate}[(1)]
\item a convex (or a quadratic perturbation of a convex)
  confining term $\GG$ (typically nonsmooth, such as the indicator function
  of
  a closed convex set of $L^2(\sfD,\mm)$) 
  such that $\GG(V)=+\infty$ iff $V\not\in \K$,
  which in
  particular forces the potential $V$ to stay above the constant
  $V_{\rm min}$.
  $\GG$ will keep track of the class of admissible potentials (see,
  e.g., formula \eqref{eq:admi} below).
\item A term 
  \begin{equation}
    \label{eq:50}
    \HH(V):=\varphi(\lambda_1(V),\cdots,\lambda_\J(V))
  \end{equation}
  which depends on the first $\J$ eigenvalues associated with $V$ through a function
  $\varphi
  \in\rmC^1(\Lambda^\J)$
  where $\Lambda^\J $
  is the
subset of $[\lambda_{\rm min},+\infty)^\J$ spanned by all the ordered
vectors made of $\J$ real numbers.
\end{enumerate}
At least when all the first $\J+1$ eigenvalus are distinct, the gradient flow equation~\eqref{eq:4}
reads as 
\begin{equation}
V'(t)+\sum_{i=1}^\J\partial_i
\varphi(\la_1(V(t)),\dots,\la_j(V(t)))u_i^2(t)\in
-\partial^-\GG(V(t))\label{eq:5}
\end{equation}
where $(u_1(t),u_2(t),\cdots,u_\J(t))$
is an orthonormal system of eigenfunctions associated with the potential
$V(t)$
and to the eigenvalues $\lambda_i(V(t))$.
When some of the eigenvalues are multiple the function
$\HH$ loses its differentiability properties;
however, we will still be able to recover \eqref{eq:5}
at least when $\varphi$ satisfies a suitable compatibility condition
at the boundary of $\Lambda^\J$. 
Among the interesting examples that are covered 
we mention the monotonically increasing composition of symmetric
functions of the first $k$-eigenvalues, $k\le \J$,
as (here $\lambda_{\rm min}>0$)
\begin{equation}
  \label{eq:142}
  \sum_{j=1}^k \lambda_j,\quad
  \sum_{i\neq j,\, i,j\le k}\lambda_i\lambda_j,\quad
  2\lambda_1+\sqrt{\lambda_1\lambda_2\lambda_3},\quad
  (\lambda_1+\lambda_2+\lambda_3)(\lambda_1^2+\lambda_2^2).
\end{equation}
The special case
when $\varphi(\lambda_1,\cdots,\lambda_\J)=\lambda_1+\cdots+\lambda_\J$
(also with $\J=1$)
is a quite interesting example of concave functional
leading to the differential inclusion
\begin{equation}\label{gfsumj}
  V'(t)+\sum_{i=1}^\J u_i^2(t)\in -\partial_F^- \GG(V(t)).
\end{equation}
From the viewpoint of gradient flows,
the main difficulty and challenging feature arising from
increasing functions of eigenvalues is that
even the simplest map $V\mapsto \lambda_j(V)$
is not even a smooth perturbation of a convex function
with respect to the potential $V$ (when $j=1$ it is in fact a concave
function, which is not differentiable when $\lambda_1$ is a multiple eigenvalue).
Therefore many standard results of gradient flow theory do not apply.
We are thus led to follow and adapt results for gradient flows of
highly non-convex functionals proposed in~\cite{rs}.
We also have to circumvent a second
important difficulty, related to the lack of compactness of the sublevels of
$\FF$. By analyzing the structure of the limiting subdifferential of
(suitable regularizations of) $\HH$ and employing a sort of
compensated-compactness argument, we are eventually able to prove
the strong convergence of the Minimizing Movement scheme for $\FF$ and
to to show that all the limits 
satisfy \eqref{eq:4} (and \eqref{eq:5} for compatible $\varphi$).

\paragraph{\bf Plan of the paper.}

Section~\ref{sec:settingresults} is devoted to
clarify the structural assumptions we will refer in the paper.
The discussion of the main examples and of some applications
covered by the theory is carried on in Section \ref{sec:examples}.

Section \ref{sec:main} contains the precise statement
of our main results.

The crucial tools concerning the
regularity and the differentiability
properties of the functionals $\HH$ and $\FF$ are
developed in Sections \ref{sec:regularity}
and \ref{section:F} respectively.

The last Section \ref{sec:minmov} collects the main estimates
concerning the Minimizing Movement scheme
and is then devoted to the proof of its strong convergence.

The Appendix contains some basic material concerning convergence of
eigenvalues and eigenfunctions and a useful result of convex analysis.

\section{Notation and assumptions}
\label{sec:settingresults}
We briefly collect here the abstract setting 
in which we work for the whole paper and a few preliminary results.
Let $(\sfD,\mm)$ be a finite measure space with
$\Hilb:=L^2(\sfD,\mm)$ separable.
We will denote by $|\cdot|$ and $\langle\cdot,\cdot\rangle$
the norm and the scalar product of $\Hilb$.
For the sake of simplicity, in the following we will assume that $\mathrm
{dim}(\Hilb)=+\infty$; it will be easy to adapt the various statements
to the case when $\Hilb$ has finite dimension (e.g.~when $\sfD$ is a
finite set and we can identify $L^2(\sfD,\mm)$ with some $\R^d$).

\subsection*{2.A
  Closed, symmetric, and coercive bilinear forms}

We will consider a Hilbert space $\V$ satisfying
\begin{equation}
\text{$\dom\hookrightarrow \Hilb$
is densely and compactly imbedded in
$\Hilb$,}\label{eq:7}
\end{equation}
and a
\begin{equation}
  \begin{gathered}
    \text{continuous and symmetric bilinear form
      $ \E\colon \dom\times\dom\rightarrow \R$ satisfying}\\
   \E(u):= \E(u,u)\ge \alpha|u|^2,\quad
    M^{-1}\|u\|_\dom^2\le \E(u,u)+|u|^2\le
    M\|u\|_\dom^2
    \quad\text{for every }u\in \dom
  \end{gathered}
  \label{eq:8}
\end{equation}
for constants $\alpha\ge0$ and $M>0$.

The bilinear form
$  \widetilde \E(u,v)=\E(u,v)+\langle u,v\rangle
$
is a scalar product for $\dom$ inducing an equivalent norm.

\subsection*{2.B
  Admissible Shr\"odinger potentials}
  We will deal with a 
  \begin{equation}
  \begin{gathered}    
  \text{lower-semicontinuous $(-\theta)$-convex functional $\GG:\Hilb\to
    (-\infty,+\infty]$}\\
  \text{with proper domain }
  \K=D(\GG):=\big\{V\in \Hilb:\GG(V)<+\infty\big\}
\end{gathered}
  \label{eq:66}
\end{equation}
such that
\begin{equation}
  \label{eq:12}
  V\in \K\quad\Rightarrow\quad
  V\ge V_{\rm min}\ \mm\text{-a.e.~in $\sfD$},
\end{equation}%
where $\theta\ge 0,\ V_{\rm min}\in \R$ are suitable constants
that we will keep fixed throughout the paper.
Notice that the domain $\K$ of the functional $\GG$
characterizes the set of admissible potentials.

Recall that $\GG$ is $(-\theta)$-convex if
the function $\GG_\theta:V\mapsto \GG(V)+\frac
\theta2|V|^2$ is convex; for later use, we will set
\begin{equation}
  \label{eq:55}
  \Kc c :=\Big\{V\in \K: |V|\le c,\
  \GG_\theta(V)
  \le
  c\Big\}\quad c\ge 0,
\end{equation}
so that $\Kc c $, $c\ge 0$, is an increasing family of closed and bounded convex
subsets of $\Hilb$ 
whose union is $\K$. Since $\K$ is not empty, it contains at least an
element $V_o$, so that setting $c_o:=\max(|V_o|,\GG_\theta(V_o))$ the set
$\Kc c$ is not empty for every $c\ge c_o$.

\subsection*{2.C 
  Eigenvalues}
For every $V\in \K$
we consider the symmetric bilinear form
\begin{equation}
  \label{eq:9}
  \E_V(u,v):=\E(u,v)+\int_\sfD V\,uv\,\d\mm,\quad
  D(\E_V):=\Big\{u\in \dom:\int_\sfD V_+u^2\,\d\mm<+\infty\Big\}.
\end{equation}
Denoting by $\Hilb_V$ the closure of $D(\E_V)$ in $\Hilb$,
it is clear that $\E_V$ is a closed and symmetric bilinear form, whose
domain $D(\E_V)$ is compactly imbedded in $\Hilb_V$.
It is also clear that 
\begin{equation}
  \label{eq:67}
  \V_4:=\V\cap L^4(\sfD,\mm)\subset D(\E_V)\quad
\text{for every }V\in \K.
\end{equation}
We say that $\lambda\in \R$ is an eigenvalue for (the linear operator induced by)
$\E_V$
if there exists $u\in D(\E_V)\setminus \{0\}$ such that
\begin{equation}
  \label{eq:10}
  \E(u,v)+\int_\sfD V\,uv\,\d\mm=\lambda\int_\sfD uv\,\d\mm\quad
  \text{for every }v\in D(\E_V).
\end{equation}
Any nontrivial solution $u$ to \eqref{eq:10} is called a
$(V,\lambda)$-eigenfunction
($u$ is also normalized if $|u|=1$).

The standard spectral theory applied to the bilinear form $\E_V$ 
allows us to prove that there exists a sequence
$\llambda(V)=(\lambda_1(V),\cdots,\lambda_k(V),\cdots)$ of eigenvalues
satisfying
\begin{equation}
\lambda_{\rm min}
\leq \la_1(V)\leq \dots\leq \la_k(V)\leq \dots,\quad
\lambda_{\rm min}:=\alpha+V_{\rm min},\quad
\lim_{k\to\infty}\lambda_k(V)=+\infty.\label{eq:3bis}
\end{equation}
Notice that $\lambda_{\rm min}$ is defined in terms of $\alpha$ and
$V_{\rm min}$ and it will remain fixed throughout the paper.
Moreover the sequence of the eigenvalues
can be characterized by means of a min-max principle,
\begin{equation}
  \label{eq:21}
  \begin{aligned}
    \la_j(V)&=\min_{E_j \subset D(\E_V)}\max_{u\in E_j\setminus
      \{0\}}\frac{\E(u)+\int_\sfD{V
        u^2\,\d\mm}}{\int_\sfD{u^2\,\d\mm}}
    \\&=\min_{E_j \subset D(\E_V)}\max_{u\in
      E_j\setminus \{0\}}\left\{
      \E(u)
      +\int_\sfD{V
        u^2}\,\d\mm\;:\;\int_\sfD{u^2}\,\d\mm=1\right\},
  \end{aligned}
\end{equation}
where the minimum is taken over the subspaces $E_j\subset
D(\E_V)$ of dimension $j$.
For a given $\J\in \N$
$\llambda^\J=\llambda^\J(V)\in
\R^\J$ will denote the vector of the first $\J$ eigenvalues;
we will denote by $\uU^\J(V)$ the collection of all the orthonormal systems of eigenfunctions associated with $\llambda^\J(V)$, namely
\begin{equation}
  \label{eq:11}
  \begin{aligned}
    \uU^\J(V):=\Big\{&\uu=(u_1,u_2,\cdots,u_\J)\in D(\E_V)^\J:
    u_i\text{ is a normalized $(V,\lambda_i)$-eigenfunction,}\\
    &\int_\sfD u_iu_j\,\d\mm=0\text{ for every }1\le i,j\le \J,\ i\neq
    j\Big\}.
  \end{aligned}
\end{equation}
We note that the eigenvalues satisfy the following monotonicity property with respect to the potential:
\begin{equation}\label{eq:moneig}
  \text{if }V_1\leq V_2\ \text{$\mm$-a.e. in $\sf D$}
  \quad\text{then}\quad  \la_k(V_1)\leq\la_k(V_2)\quad\text{for all $k\in\N$.}
\end{equation}

\subsection*{2.D
  $L^4$-summability of eigenfunctions}
We will assume that
  every eigenfunction $u$ solving \eqref{eq:10} for some $V\in \K$
  belongs to $L^4(\sfD,\mm)$ and for every constant $c\ge c_o$ and
  $\bar \lambda>\lambda_{\rm min}$ the set
  \begin{equation}
    \label{ass:main}
    \begin{gathered}
      \mathrm U[c,\bar \lambda]:=\Big\{u\text{ is a normalized
        $(V,\lambda)$-eigenfunction with } V\in \Kc c ,\ \lambda\le
      \bar\lambda\Big\}\\
      \text{  is relatively compact in $L^4(\sfD,\mm)$.}
    \end{gathered}
  \end{equation}%
\begin{remark}
  \label{rem:equivalent}
Clearly $\K[c]$ is weakly compact in $\Hilb$. Further, we will see that
  from every sequence $u_n$ of $(V_n,\lambda_n)$-eigenfunctions, $n\in \N$, 
  with $\lambda_n\to \lambda$ and $V_n\weakto V$ in $\Hilb$
  it is possible to extract a subsequence $k\mapsto u_{n(k)}$ 
  strongly converging to a $(\lambda,V)$-eigenfunction $u$ in $\V$. Then,~\eqref{ass:main} is in fact equivalent to the compactness of $\mathrm U[c,\bar
  \lambda]$ in $L^4(\sfD,\mm)$ and can also be formulated as a continuity property:
  \begin{equation}
    \label{eq:143}
    \begin{gathered}
      \text{for every sequence $(u_n)_{n\in \N}$ of normalized
  $(V_n,\lambda_n)$-eigenfunctions:}\\  
      V_n\weakto V\text{ in }\Kc c,\ 
      \lambda_n\to\lambda,\
      u_n\to u\text{ strongly in }\V\quad\Rightarrow\quad
      u_n\to u\ \text{strongly in }L^4(\sfD,\mm).
    \end{gathered}
  \end{equation}
\end{remark}

Assumption \eqref{ass:main}
and \eqref{eq:67}
guarantee that every $(V,\lambda)$-eigenfunction $u$
belongs to $\V_4\subset D(\E_W)$ for every $W\in \K$ and
\begin{gather}
  \label{eq:58}
    \V_4=\V\cap L^4(\sfD,\mm)\text{ is dense in $D(\E_V)$
      for every $V\in \K$},\\
    \label{eq:68}
    \Hilb_4:=\overline{\V_4}^{\Hilb}=
    \overline {D(\E_V)}^{\Hilb}\quad\text{for every
    }V\in \K,\\
    \label{eq:69}
    \text{$\V_4$ is dense in $\V$ and $\Hilb_4=\Hilb$}\quad
    \text{if\quad
      $\K\cap L^\infty(\sfD,\mm)\neq \emptyset.$}
\end{gather}
In fact, if $V\in \K$ and $(u_k)_{k\in \N}$ 
is an orthonormal system of
the eigenfunctions of $\E_V$,
the space $E:=\Span\{u_k:k\in \N\}$
is contained in $\V_4$ and it
is dense in $D(\E_V)$.
In particular 
$\overline E^{\Hilb}=\overline{\V_4}^{\Hilb}=\overline{D(\E_V)}^{\Hilb}$.
If $V\in \K\cap L^\infty(\sfD,\mm)$, then
$D(\E_V)=\V$ which is dense in $\Hilb$.

Moreover, Assumption \eqref{ass:main} yields in particular that
for every $c\ge c_o,\ \bar\lambda>\lambda_{\rm min}$ there exists
a constant $C$ such that
\begin{equation}
  \label{eq:56}
  u\in \mathrm U[c,\bar\lambda]\quad\Rightarrow\quad
  \|u\|_{L^4(\sfD,\mm)}\le C.
\end{equation}
Conversely, if for every $c\ge c_o,\ \bar\lambda>\lambda_{\rm min}$
there exists $p>4$ and $C>0$ such that
\begin{equation}
  \label{eq:57}
  u\in \mathrm U[c,\bar\lambda]\quad\Rightarrow\quad
  \|u\|_{L^p(\sfD,\mm)}\le C,
\end{equation}
then Assumption \eqref{ass:main} is satisfied, since $\mathrm
U[c,\bar\lambda] $ is clearly bounded in $\V$ (thus relatively compact
in $L^2(\sfD,\mm)$) and bounded in $L^p(\sfD,\mm)$ for $p>4$,
and therefore relatively compact in $L^4(\sfD,\mm)$. 
Here are a few simple examples where Assumption \eqref{ass:main} is satisfied:
\begin{enumerate}[(a)]
\item $\sfD$ is a finite set.
\item $\E$ is a Dirichlet form.
\item $\dom$ is continuously embedded in $L^4(\sfD,\mm)$.
\item For every $c\ge c_o$ $\K[c]$ is bounded in $L^\infty(\sfD,\mm)$ (so that $D(\E_V)=\dom$ and
  $\mathrm L_V=\mathrm L+V$ for every $V\in \K$)
  and the resolvent operator
  $(\mathrm I+\mathrm L)^{-1}$ is bounded from $L^2(\sfD,\mm)$
  to $L^p(\sfD,\mm)$ for some $p>4$.
\end{enumerate}
We will discuss these cases in the next Section \ref{sec:examples}

\subsection*{2.E
  Functionals depending on the first $\J$ eigenvalues}
We denote by $\Lambda^\J$ the
subset of $\R^\J$ spanned by all the ordered vectors made of $\J$ real numbers, namely, 
\begin{equation}
  \label{eq:6}
  \Lambda^\J:=\Big\{(\lambda_1,\cdots,\lambda_\J)\in
  \R^\J:
  \lambda_{\rm min}\le
  \lambda_1\le \lambda_2\le \cdots\,\le \lambda_\J\Big\}.
\end{equation}
This can be seen as the set of all the possible first $\J$ eigenvalues.
Notice that $\Lambda^\J$ is a closed convex subset of $\R^\J$ whose
interior in $[\lambda_{\rm min},+\infty)^\J$ is
\begin{equation}
  \label{eq:131}
  \Int(\Lambda^\J)=
  \Big\{(\lambda_1,\cdots,\lambda_\J)\in
  \R^\J:
  \lambda_{\rm min}\le\lambda_1< \lambda_2< \cdots\,< \lambda_\J\Big\}.
\end{equation}
We consider a function
$\varphi:\Lambda^J\to \R$
of class $\rmC^1$ satisfying
\begin{equation}
  \label{eq:65}
  \quad
  \varphi(\llambda)\ge -A(1+\lambda_J\lor 0)
  \quad\text{for every }\llambda\in \Lambda^J,
\end{equation}
for some constant $A\ge0$.
We can then define the functionals
\begin{equation}
  \label{eq:2}
  \HH:\K\to \R,\quad
  \HH(V):=\varphi(\llambda^\J(V))\quad\text{for every }V\in \K
\end{equation}
and
\begin{equation}
  \label{eq:13}
  \FF:\Hilb\to (-\infty,+\infty],\quad
  \FF(V):=\HH(V)+\GG(V)\text{ if }V\in \K,\quad
  \FF(V):=+\infty\text{ if }V\not\in \K.
\end{equation}
\subsection*{2.F
  Additional structural compatibility}
Our main existence result will only rely on the assumptions
2.A--2E we have discussed above. 
To gain a more refined characterization of
the gradient flows, we will sometimes assume
a further structural property on $\varphi$ in addition to $\rmC^1$ regularity.
More precisely, we will suppose that
for every 
$\llambda=(\lambda_1,\cdots,\lambda_\J) \in \Lambda^\J$
  \begin{equation}
    \label{eq:29}
    \begin{gathered}
      \partial_{J}\varphi(\llambda)\ge0,\\
      k\in
      \{2,\cdots,\J\},\ \lambda_{k-1}=\lambda_k\quad \Rightarrow\quad
      \partial_{k-1}\varphi(\llambda)\ge \partial_k\varphi(\llambda).
    \end{gathered}
\end{equation}
Notice that if $\varphi$ is considered as
a function
depending on the first $K$ eigenvalues
with $K>\J$ (and therefore is trivially extended to $\Lambda^{K}$),
then \eqref{eq:29} also holds on $\Lambda^{K}$.
Conversely, if $\varphi$ just depends on the first $I$ eigenvalues,
$I<J$, then $\partial_k\varphi(\llambda)=0$ for $k>I$ and
\eqref{eq:29} yields $\partial_I\varphi(\llambda)\ge0$.

If  \eqref{eq:29} holds, then 
for every $k\in
\{1,\cdots,\J\}$ and every $(\lambda_1,
\cdots \lambda_{k-1})\in \Lambda^{k-1}$ the map 
\begin{equation}
  \label{eq:91b}
  z\mapsto \varphi(\lambda_1,\cdots,\lambda_{k-1},z,\cdots,z)
  \quad\text{is increasing in }[\lambda_{k-1},+\infty),
\end{equation}
so that
\begin{align*}
  \varphi(\lambda_{\rm min}, \lambda_{\rm min},\cdots, \lambda_{\rm
  min})
  &\le
    \varphi(\lambda_1, \lambda_1,\cdots, \lambda_1)
    \le
    \varphi(\lambda_1, \lambda_2,\cdots, \lambda_2)
    \le
    \varphi(\lambda_1, \lambda_2,\lambda_3,\cdots, \lambda_3)
    \le \cdots
    \\&\le \varphi(\lambda_1, \lambda_2,\cdots, \lambda_J);
\end{align*}
therefore $\varphi$ is bounded from below
and \eqref{eq:65} holds as well with
$A:=\big(\varphi(\lambda_{\rm min}, \lambda_{\rm min},\cdots, \lambda_{\rm
  min})\big)_-$.

Clearly \eqref{eq:29} characterizes a convex cone in $\rmC^1(\Lambda^\J)$.
It is not difficult to check that if 
$\phi\in \rmC^1([\lambda_{\rm min},+\infty)^K)$, $K\le \J$, is a
symmetric function satisfying
\begin{equation}
  \label{eq:30}
  \begin{gathered}
    \phi(\sigma(\llambda))=\phi(\llambda)\quad\text{for every
    }\llambda\in [\lambda_{\rm min},+\infty)^K,\quad \sigma\in
    \operatorname{Sym}(K),\quad
    \big(\sigma(\llambda)\big)_k=\llambda_{\sigma(k)},\\
    \partial_K\phi(\llambda)\ge0,
    \end{gathered}
\end{equation}
then
$\varphi(\lambda_1,\cdots,\lambda_\J):=\phi(\lambda_1,\cdots,\lambda_K)$
satisfies \eqref{eq:29}.

So, if e.g.~$\lambda_{\rm min}>0$, monotone compositions of
symmetric functions such as
\begin{displaymath}
  2\lambda_1+\sqrt{\lambda_1\lambda_2}+
  \ln(\lambda_1\lambda_2\lambda_3),\quad
  (\lambda_1+\lambda_2+\lambda_3)(\lambda_1^2+\lambda_2^2)
\end{displaymath}
satisfy \eqref{eq:29}.
Other examples are provided by the functions
\begin{displaymath}
  \llambda\mapsto h(\lambda_j-\lambda_{j-1}),\quad
  1\le j\le \J,\quad h\in \rmC^1([0,+\infty)),\quad
  0=h'(0)\le h'(r)\text{ for every }r\ge0.
\end{displaymath}
\begin{remark}
  From now on we will always operate in the setting described in
  2.A--2.E;
  in particular, the constants $\alpha, M, \theta,V_{\rm min},
  \lambda_{\rm min},c_o,\J$ will be considered as fixed throughout the paper.  
\end{remark}
\section{Examples and applications}
\label{sec:examples}
Let us briefly show a few examples where
the assumptions of Section 2.A--2.D apply, considering in particular
the cases (a,\ldots,d) of 2.D.
\subsection{The finite dimensional case}
Let $\sf D$ be a finite set which we can identify with
the set of indices $\{1,\cdots,d\}$
so that $\Hilb=L^2(\sf D,\mm)$ can be identified to $\R^d$ for some $d\geq 1$.
In this case the bilinear form $\E$ can be identified with a $d\times d$ matrix $L=(a_{ij})$, symmetric and nonnegative definite, so that \[
  \E(u,v)=\sum_{i,j=1}^d a_{ij}u_iv_j=\langle L u,v\rangle\qquad
  \text{for $u,v\in \R^d$}.
\]
We can take as 
$\GG$ any convex and lower semicontinuous function in $\R^d$
whose proper domain is a closed convex set
$\K\subset [v_{\rm min},+\infty)^d$, for some constant $v_{\rm min}\in \R$.

For every $V\in \K$ we have the symmetric bilinear form \[
  \E_V(u,v):=\E(u,v)+\sum_{i=1}^dV_iu_iv_i=
  \langle Lu+Vu,v\rangle,\quad
  (Vu)_i:=V_i u_i,
\]
and $\lambda\in \R$ is a $(\lambda,V)$-eigenvalue if
there exists $u\in \R^d \setminus \{0\}$ such that \[
  Lu+Vu=\lambda u.
\]
Since we are in a finite dimensional setting,
Assumption~\eqref{ass:main} is trivially satisfied.

\subsection{The case of a Dirichlet form}\label{sec:dirform}
Let us now consider the case when $\E$ is a Dirichlet form, thus 
satisfying the Markov condition
(see~\cite{fukushima})
\begin{equation}\label{eq:markovian}
\text{for all $u\in \V$, then $v:=\min\{\max\{u,0\},1\}\in \V$ and $\E(v)\leq \E(u)$.}
\end{equation}
Since, for every $V\in \K$, $\V\cap L^\infty(\sfD,\mm)\subset D(\E_V)$
and $\V\cap L^\infty(\sfD,\mm)$ is also dense in $\V$ and thus in
$\Hilb$, we deduce that $D(\E_V)$ is dense in $\Hilb$.
If $\beta:=1+(\lambda_{\rm min})_-$ the quadratic form
$\E_V+\beta|\cdot|^2$ is a Dirichlet form associated with the selfadjoint
operator
by $\rmL_V+\beta$ whose inverse 
$\mathrm R_V^\beta:=(\rmL_V+\beta)^{-1}:\Hilb \to\Hilb$
is a sub-Markovian compact selfadjoint operator
(and in particular a
contraction) in $\Hilb$;
it is well known that
$u$ is a $(\lambda,V)$ eigenfunction if and only if
$u$ is a $(\lambda+\beta)^{-1}$-eigenfunction of the operator
$\rmR_V^\beta$ (see also Appendix \ref{sec:appmosco}).
The restriction of $\rmR_V^\beta$ to $L^p(\sfD,\mm)$ is also a contraction.
By \cite[Theorems 1.6.1-2-3]{davies}
the spectrum and the eigenfunctions of $\rmR_V^\beta $ are
independent of $p\in [2,+\infty)$ and in particular all the eigenfunctions associated with
$V$ belong to $L^p(\sfD,\mm)$ for every $p\in [2,+\infty)$.

Let now $u_n$ be a sequence of
normalized $(V_n,\lambda_n)$-eigenfunctions with
$\lambda_n\le \bar\lambda$ and $V_n\in \K[c]$.
Up to extracting a suitable subsequence, it is not restrictive to
assume that $\lambda_n\to \lambda$, $V_n\weakto V$ in $\Hilb$,
$u_n\to u$ strongly in $\V$ for some
$\lambda\le \bar \lambda$, $V\in \K[c]$, $u\in D(\E_V)$.

By Lemma \ref{le:A1} in the Appendix,
$\rmR_{V_n}^\beta$ converge
uniformly to $\rmR_V^\beta $ in $\mathcal L(\Hilb)$: this implies that
$u$ is a normalized $(V,\lambda)$-eigenfunction.
We want to show that $\|u_n-u\|_{L^4}\to 0$; we fix $p>4$ and we show
that the $L^p$-norm of $u_n$ is bounded.
We argue by contradiction, assuming that $\|u_n\|_{L^p}\to+\infty$
along a (not relabeled) subsequence.

Since $\rmR_V$ and $\rmR_{V_n}$ are contractions in $L^q(\sfD,\mm)$ for any $q>p>4$,
by Riesz-Thorin interpolation we also have that
$\|\rmR_V^\beta -\rmR_{V_n}^\beta \|_{\mathcal L(L^p)}\to0$ as $n\to+\infty$.
Setting $\tilde u_n:=\|u_n\|_{L^p}^{-1}u_n$
by \cite[Thm. 7.4, p. 690]{Babuska-Osborn}
we find a $(V,\lambda)$-eigenfunction $v_n$ such that
$\|\tilde u_n-v_n\|_{L^p}\to0$.
We deduce that $\|v_n\|_{L^p}$ is bounded; since $v_n$ belongs to a
finite dimensional space, it admits a subsequence $v_{n(k)}$ strongly
convergent to some limit $v$ in $L^p(\sfD,\mm)$.
Therefore $\tilde u_n\to v$ strongly in $L^p(\sfD,\mm)$ with $\|v\|_{L^p}=1$; however
$\tilde u_n\to0$ in $L^2(\sfD,\mm)$, a contradiction.
\begin{remark}
  All the examples 1,2,3,5 considered in the Introduction fit in the
  framework of Dirichlet forms, with domain $\V$ which is compact in $L^2(\sfD,\mm)$.
\end{remark}

\subsection{The case when $\V\subset L^4(\sfD,\mm)$ or the resolvent has
a regularizing effect}
This case follows immediately from the equivalent characterization
\eqref{eq:143}
of \eqref{ass:main}. Notice that the example 4 in the Introduction
corresponds to this situation, thanks to the Sobolev imbedding of
$H^2(\sfD)$ in $L^4(\sfD)$ when the dimension $d\le 8$.

The last case (d) considered in Section 2.D can be easily discussed by
observing that if $V\in L^\infty(\sfD,\mm)$ then a normalized
$(V,\lambda)$-eigenfunction $u$ satisfies
the equation
\begin{displaymath}
  \rmL u+u=f\quad
  \text{with }
  f:=\lambda u-Vu;
\end{displaymath}
since $\|f\|_{L^2}\le |\lambda|+\|V\|_{L^\infty}$, we immediately
recover a uniform estimate of $u$ in $L^p(\sfD,\mm)$ as in \eqref{eq:57}.

\medskip\noindent
We conclude this section by briefly discussing two possible
applications of gradient flows of spectral functionals.

\subsection{Optimal design problems arising from population dynamics and reaction diffusion equations}
Let
$\sfD\subset\R^d$ be an open
and bounded set, let 
$V^-<v_0<V^+\in\R$ be real constants with $V^-<0<V^+$, 
and let
$$\K:=\Big\{V\in L^\infty(\sfD;\mm):V^-\le V\le V^+,\
\mean\sfD V(x)\,\d x\geq v_0
\Big\}.$$
For $V\in \K$, the classical reaction-diffusion model in an heterogeneous environment proposed by Fisher and Kolmogorov can be generalized as:
\begin{equation}\label{FK}
\left\{
\begin{split}
u_t&=\Delta u-uV(x)-u^2,\qquad &&\mbox{in }\sfD\times\R^+,\\
u&=0,\quad\text{(or $\partial_\nu u=0$)},\qquad &&\mbox{on }\partial \sfD\times \R^+,\\
u(x,0)&\geq 0, \qquad u(x,0)\not \equiv 0,\qquad &&\mbox{in }\overline \sfD,
\end{split}
\right.
\end{equation}
where $u(x,t)$ represents the population density at time $t$ and position $x$, and $(-V(x))$ is the intrinsic grow rate of the species at the spatial point $x$. 
The condition proved in~\cite{bhr} for the survival of the species for large times (as $t\rightarrow \infty$) is that the first eigenvalue $\la_1(V)$ for the associated linearized problem (we stress that here we have the opposite sign in front of the potential, with respect to~\cite{bhr}) which is defined as\[
\left\{
\begin{split}
-\Delta u+Vu&=\lambda_1(V)u,\qquad &&\mbox{in }\sfD,\\
u&=0, \quad\text{(or $\partial_\nu u=0$)},\qquad &&\mbox{on }\partial \sfD,
\end{split}
\right.
\]
should be negative, so it is natural to try to minimize it under the
constraint $V\in \K$.
This problem has been widely studied (see for example~\cite{cc,llnp}
and the references therein): it is known that an optimal potential
$V^*$ is of bang-bang type, i.e. $V^*=V^+$ on $\sfD^+\subset \sfD$ and
$V^*=V^-$ on $\sfD^-=\sfD\setminus \sfD^+$. On the other hand, there
are still many open problems concerning the shape of the partition
$\sfD^\pm$.
The $L^2$-gradient flow of the functional
\begin{displaymath}
  \FF(V):=
  \begin{cases}
    \lambda_1(V)&\text{if }V\in \K,\\
    +\infty&\text{otherwise},
  \end{cases}
\end{displaymath}
can provide some useful new insights.
\subsection{{Optimization of eigenvalues of potentials}} 
In the paper~\cite{bgrv} some optimization problems for eigenvalues of
potentials in the case of the Dirichlet Laplacian, i.e. when
$\sfD\subset\R^d$ is an open and bounded set, $\dom=H^1_0(\sfD)$ and
$\E(u,v)=\int_\sfD\nabla u\cdot\nabla v\,\d x$,  were considered. 
The authors studied the minimization problem 
\begin{equation}\label{eq:potopt}
\min\Big\{\varphi(\la_1(V),\dots,\la_\J(V)) : V\in \K\Big\},
\end{equation}
for all $\varphi\colon \R^\J\to \R$ regular and increasing in each
variables,
with the class of admissible potentials $\K$
defined as follows:
\begin{equation}\label{eq:admi}
  \K=\left\{V\in L^2(\sfD,\mm):V\ge 0,\  \mean \sfD{\Psi(V)\leq c}\right\},
\end{equation}
where $\Psi: [0,+\infty]\rightarrow[0,+\infty]$ 
denotes a strictly decreasing convex function 
and
$c\in \R$ is such that
\[
 \lim_{r\to+\infty}\Psi(r) < c < \Psi(0).
\]
It is clear that $\K$ is convex and closed in $L^2(\sf D,\mm)$.
Some remarks about the choice of the class of potentials are in order.
First of all, we note that examples of function satisfying the
hypotheses above are $\Psi(s)=s^{-\beta}$ or $\Psi(s)=e^{-\beta s}$,
for some $\beta>0$. It is immediate to check that $\K$ is not
empty
and that $0\not \in \K$, so that
the trivial potential $V=0$ is not allowed. 

By choosing $\GG$ as the indicator function of $\K$ (which is clearly convex and lower semicontinuous)
and $\HH(V)=\varphi (\la_1(V),\dots, \lambda_j(V))$,
we provide a gradient flow evolution for the minimization problems studied in~\cite[Section~4]{bgrv}. We note that in our $L^2$ setting, the existence of minimizers for the problem \[
\min{\Big\{\varphi(\lambda_1(V),\dots, \lambda_\J(V)) : V\in \K\Big\}},
\]
follows easily since the functional is weakly lower semicontinuous in $L^2(\sf D,\mm)$.

When $\Psi(s)=e^{-\beta s}$ the interest for problem~\eqref{eq:potopt}
also lies in the fact that
it can be used as an approximation of a shape optimization problem (see~\cite[Example~5.8]{bgrv}), namely \[
\min\Big\{\varphi(\la_1(\Omega),\dots,\la_\J(\Omega)) : \Omega\subset \sfD,\; \mm(\Omega)=c\leq \mm(\sf D)\Big\}.
\]

\section{Main results}
\label{sec:main}
In order to make precise the notion of gradient flows we are going to
study, let us first recall the main definitions of
subdifferentials which are involved. We refer to~\cite[Chap. 8]{rowe} for more details.
\begin{definition}[Fr\'echet and limiting subdifferentials]\label{subdiff}
Let $\tFF:\Hilb\to \R\cup\{+\infty\}$ 
and let $v\in D(\tFF)\subset \Hilb,\ \xi\in \Hilb.$
We say that 
$\xi$ belongs to the \emph{Fr\'echet subdifferential} $\partial_F^-\tFF(v)$
if
\[
  \begin{aligned}
    \liminf_{w\rightarrow v}\frac{\tFF(w)-\tFF(v)-\langle \xi,
      w-v\rangle }{|w-v|}&\geq 0;
\end{aligned}
\]
equivalently, by using the viscosity characterization
\cite[Remark 1.4]{Borwein-Zhu99},
there exist $\varrho>0$ and a function
\begin{equation}
  \label{eq:barrier}
  \omega:\Hilb\to[0,+\infty)\quad\text{of class $\rmC^1$, convex, and
    satisfying}
  \quad
  \omega(0)=0,
\end{equation}
such that 
\begin{equation}
  \tFF(w)-\tFF(v)-\langle \xi,w-v\rangle \geq -\omega(w-v)\quad 
\text{for every }w\in \rmB(v,\varrho)=\{w\in \Hilb : |w-v|<\rho\}.\label{eq:136}
\end{equation}
$\xi$ belongs
to the \emph{limiting subdifferential} (known also as \emph{Mordukhovich subdifferential}~\cite{krmo,mor1,mor}) $\partial_L^-\tFF(v)$ if there exist sequences $v_n,\xi_n\in \Hilb$ such that 
\begin{equation}\label{eq:limitsubdiff}
\xi_n\in \partial_F^-\tFF(v_n),\quad v_n\rightarrow v\;\text{strongly
  in $\Hilb$},\quad \xi_n\rightharpoonup \xi\;\text{weakly in
  $\Hilb$},\quad \tFF(v_n)\to \tFF(v).
\end{equation}
We denote by $\partial_{L}^{\circ}\tFF(v)$ the element of minimal norm in $\partial_{L}^-\tFF(v)$.
\end{definition}
\begin{remark}[On the definition of Fr\'echet and limiting subdifferential]
  \label{rem:tedious}
  If we restrict the functions $\omega$ to the class
  $\omega(\delta):=
  q|\delta|^2$ then \eqref{eq:136} corresponds to the definition of
  proximal subdifferential.
  Notice that here we adopted a definition of
  limiting subdifferential which is stronger than
  the one considered in \cite{rs} (and denoted by $\partial_\ell^-\tFF$), since
  in \eqref{eq:limitsubdiff} 
  we require the convergence of the functionals 
  $\tFF(v_n)\to \tFF(v)$ instead of their boundedness.
  This choice is justified by the better regularity properties of
  the functionals which we are considering,
  but in the case of $\FF$ the two definition will lead to the same object.
\end{remark}
It is well known that when $\tFF$ is $(-\eta)$-convex and
lower semicontinuous, then $\partial_F^-\tFF$ and
$\partial_L^-\tFF$ coincide
\cite{Clarke} and can also be characterized by
  \begin{equation}
  \label{eq:73}
  \xi\in \partial_F^-\tFF(v)
  \quad\Leftrightarrow\quad
  \tFF(w)\ge \tFF(v)+\langle \xi,w-v\rangle-
  \frac\eta 2|w-v|^2\quad
  \text{for every }w\in \Hilb.
\end{equation}
In particular, when $\eta=0$ and $\tFF$ is convex we recover 
the usual subdifferential
of convex analysis which we will simply denote by $\partial^-\tFF$.

For a given time interval $[0,T]$, $T>0$, 
the gradient flow of a convex functional then reads as the solution
$v:[0,T]\to \Hilb$ of the differential inclusion
\begin{equation}
  \label{eq:127}
  v'(t)\in -\partial_F^-\tFF(v(t))\quad\text{for a.e.~$t\in (0,T)$},
\end{equation}
and for every given initial condition $v_0\in D(\tFF)$
there exists a unique solution $v\in H^1(0,T;\Hilb)$ satisfying
\eqref{eq:127} and $v(0)=v_0$ \cite{Brezis73}.

In our case, the interesting functionals $\FF$ are typically neither convex nor
$(-\eta)$-convex for any choice of $\eta>0$, since even simple examples
such as $\HH(V)=\sum_{j=1}^J\lambda_j(V)$ are nonsmooth concave
functionals. In this case the graph of the proximal (and also of the Fr\'echet) subdifferential is
not closed
and it is then natural to study the corresponding equation
of \eqref{eq:127} in terms of the limiting subdifferential (see
e.g.~the discussion in \cite{rs}).
A further difficulty arises by the fact that we did not assume any
compactness on the sublevels of $\FF$.

In order to circumvent these difficulties, we adopt the variational
approach of the Minimizing Movement method
\cite{rs,ags}, trying to obtain the gradient flow as a limit
of a discrete approximation.

We introduce a uniform partition of the interval $[0,T]$:\[
  0=t_0<t_1<\dots<t_{N-1}<T\leq t_N,\quad
  t_n:=n\tau,\ n\in \{0,\cdots,N\},\ N=N(\tau):=\lceil T/\tau\rceil
\]
corresponding to a step size $\tau>0$, the 
perturbed functionals
\begin{equation}
  \label{eq:104}
  \Phi(\tau,V;W):=\FF(W)+\frac 1{2\tau}|W-V|^2,\quad V,W\in \Hilb,
\end{equation}
and we consider the
discrete solutions $\{V^n_\tau\}_{n\in \N}$ in $\Hilb$ of the
variational iterative scheme
starting from a given initial datum $V_0\in D(\FF)$:
\begin{equation}\label{impleuler}
  V^{n}_\tau\in \argmin_{V\in \Hilb}\Phi(\tau, V^{n-1}_\tau;V),
  \quad
  n=1,\cdots,N(\tau),\quad
  V^0_\tau:=V_0 \in D(\FF).
\end{equation}
We will show (see Lemma \ref{le:Flsc}) that
a discrete solution always exists for every initial datum
  $V_0\in D(\FF)$.

We then call $\bar V_\tau$ the piecewise constant interpolant
and 
by $V_\tau$ the piecewise linear interpolant
of the discrete values $\{V^n_\tau\}$:
\begin{equation}\label{eq:interpolantlpc}
  \bar V_\tau(t)=V^n_\tau,\qquad V_\tau
  (t):=\frac{t_n-t}{\tau}V^{n-1}_\tau+\frac{t-t_{n-1}}{\tau}V^n_\tau\qquad
  \text{if }t\in(t_{n-1},t_n].
\end{equation}

\begin{definition}[Generalized Minimizing Movements]
  \label{def:GMM}
We say that a curve $V\colon [0,T]\rightarrow \Hilb$ is a
(strong) Generalized Minimizing Movement
for $\Phi$ starting from $V_0\in \Hilb$ in $[0,T]$ if there exists a
decreasing vanishing sequence of step sizes $(\tau(k))_{k\in \N}$,
$\tau(k)\downarrow0$ as $k\to\infty$, 
and a corresponding sequence of discrete
solutions $\bar V_{\tau(k)}$ such that
\begin{equation}
  \label{eq:129}
  V_{{\tau}(k)}(t)\to V(t)\quad\text{strongly in $\Hilb$ for every $t\in [0,T]$.}
\end{equation}
We denote by
$\mathrm{GMM}(\Phi, V_0,T)$
the collection of all the (strong) Generalized Minimizing Movements for $\Phi$
starting from $V_0$
in the interval $[0,T]$.
\end{definition}

Our first result reads as follows.
\begin{theorem}
  \label{thm:main1}
  In the setting of Section \ref{sec:settingresults} under
  the assumptions stated in 2.A--2.E,
  for every choice of $V_0\in \K$
  the set $\mathrm{GMM}(\Phi,V_0,T)$ is not empty.
  Every $V\in \mathrm{GMM}(\Phi,V_0,T) $ belongs to $H^1(0,T;\Hilb)$, it satisfies for almost every $t\in (0,T)$
\begin{equation}\label{eq:127ter}
\text{$V'(t)$ is the projection of the origin on the affine hull $\mathrm{aff}\Big(-\partial_L^-\FF(V(t))\Big)$},
\end{equation}
it solves the Cauchy problem, 
  \begin{equation}
  \label{eq:127bis}
  V'(t)= -\partial_L^\circ\FF(V(t))\quad\text{for a.e.~$t\in
    (0,T)$},\quad
  V(0)=V_0,
\end{equation}
and satisfies the Energy-Dissipation Identity
\begin{equation}
  \label{eq:128}
  \FF(V(t))=\FF(V_0)-\int_0^t |V'(s)|^2\,\d s\quad\text{for every
  }t\in [0,T]
\end{equation}
Finally, if $k\mapsto\tau(k)$ is a vanishing sequence as in
\eqref{eq:129} we also have
\begin{equation}
  \label{eq:130}
  V_{\tau(k)}'\to V'\quad\text{strongly in }L^2(0,T;\Hilb),\quad
  \FF(\bar V_{\tau(k)}(t))\to \FF(V(t))\quad\text{for every }t\in [0,T].
\end{equation}
\end{theorem}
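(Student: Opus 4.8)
The plan is to adapt the minimizing–movement analysis of \cite{rs} for gradient flows of highly non-convex functionals. Two features prevent a direct application: the sublevels of $\FF$ are not compact (indeed not even bounded, already for $\K$ as in \eqref{eq:admi}), and $\HH$ is not a $C^1$ perturbation of a convex functional. The first obstacle will be bypassed by a priori estimates confining \emph{all} the discrete solutions to a fixed bounded convex set $\Kc c$, on which weak $\Hilb$-compactness holds; the second by systematically exploiting Assumption \eqref{ass:main} (equivalently Remark \ref{rem:equivalent}): the eigenfunctions entering the subdifferential of $\HH$ range in a set relatively compact in $L^4(\sfD,\mm)$, which is the ``compensated compactness'' ingredient.

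\textbf{Step 1: discrete scheme and a priori estimates.} By Lemma \ref{le:Flsc}, $\FF$ is lower semicontinuous; moreover $\HH(V)\ge -A(1+\lambda_\J(V)\lor0)$ by \eqref{eq:65}, and testing \eqref{eq:21} with the span of the first $\J$ eigenfunctions of $\E$ (which lie in $L^4$) shows $\lambda_\J(V)$ grows at most linearly in $|V|$, while $\GG_\theta$ is bounded below by an affine function. Hence for $\tau<1/\theta$ the functional $W\mapsto\Phi(\tau,V^{n-1}_\tau;W)$ is coercive and weakly lower semicontinuous, so the iterates $V^n_\tau$ of \eqref{impleuler} exist. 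From the discrete energy inequality
\[
  \FF(V^n_\tau)+\tfrac12\sum_{k=1}^n\frac{|V^k_\tau-V^{k-1}_\tau|^2}{\tau}\le \FF(V_0),
\]
together with $|V^n_\tau-V_0|\le \sqrt{T}\bigl(\sum_k|V^k_\tau-V^{k-1}_\tau|^2/\tau\bigr)^{1/2}$ and the linear growth above, a Gronwall argument (iterated over time subintervals of length $<1/(2\theta)$ when $\theta>0$) gives $\sup_{t}|\bar V_\tau(t)|\le R(T)$ and $\|V_\tau\|_{H^1(0,T;\Hilb)}\le C$, uniformly in $\tau$. Thus, up to a subsequence, $V_\tau\weakto V$ in $H^1(0,T;\Hilb)$, $\bar V_\tau(t),V_\tau(t)\weakto V(t)$ in $\Hilb$ for every $t$, and all interpolants lie in a fixed $\Kc c$.

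\textbf{Step 2: sharp discrete inequality and energy identity.} Using De Giorgi's variational interpolant $\hat V_\tau$ as in \cite{rs,ags}, the iterates also satisfy, for $\bar t$ the mesh point $\ge t$,
\[
  \FF(\bar V_\tau(t))+\tfrac12\int_0^{\bar t}|V_\tau'|^2\,\d s+\tfrac12\int_0^{\bar t}|\partial\FF|^2(\hat V_\tau)\,\d s\le \FF(V_0),
\]
where $|\partial\FF|$ denotes the relaxed slope (lower semicontinuous with respect to weak $\Hilb$-convergence on $\Kc c$ by construction). The decisive point, proved in Sections \ref{sec:regularity}--\ref{section:F}, is that this relaxed slope is a \emph{strong upper gradient} for $\FF$ along absolutely continuous curves and controls the limiting subdifferential: here one writes the minimal-norm element of $\partial_L^-\FF(V)$ as $\sum_{i=1}^\J\partial_i\varphi(\llambda^\J(V))\,u_i^2+g$ with $g\in\partial^-\GG(V)$ and $(u_1,\dots,u_\J)$ a suitable orthonormal system of eigenfunctions, so that when the potentials converge weakly Remark \ref{rem:equivalent} forces the $u_i$ to converge strongly in $L^4$ — hence $\sum_i\partial_i\varphi\,u_i^2$ strongly in $\Hilb$, by \eqref{eq:56} with a uniform bound — while the convex term $g$ is handled by maximal monotonicity of $\partial^-\GG_\theta$. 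Passing to the limit, using weak lower semicontinuity of $\FF$ and of $|\partial\FF|$ on $\Kc c$ and of the $L^2$ norm of the derivative, yields
\[
  \FF(V(t))+\tfrac12\int_0^t|V'|^2\,\d s+\tfrac12\int_0^t|\partial\FF|^2(V)\,\d s\le \FF(V_0).
\]
The upper-gradient chain rule gives the reverse inequality, so every inequality is an equality: this is \eqref{eq:128}; it forces $|V'(t)|=|\partial\FF|(V(t))$ a.e., and since $-V'(t)\in\partial_L^-\FF(V(t))$ with $|\partial\FF|(V(t))$ equal to the minimal norm of $\partial_L^-\FF(V(t))$ we obtain \eqref{eq:127bis}; the refinement \eqref{eq:127ter} (that $V'(t)$ is the projection of the origin onto $\mathrm{aff}(-\partial_L^-\FF(V(t)))$) follows from the structure of the limiting subdifferential exactly as in \cite{rs}, and $V(0)=V_0$ is immediate since $V_\tau(0)=V_0$.

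\textbf{Step 3: strong convergence.} Equality in the energy identity turns the $\liminf$'s into limits: $\int_0^T|V_\tau'|^2\to\int_0^T|V'|^2$, which with $V_\tau'\weakto V'$ in $L^2(0,T;\Hilb)$ gives $V_\tau'\to V'$ strongly there; then $V_\tau(t)=V_0+\int_0^tV_\tau'\,\d s\to V_0+\int_0^tV'\,\d s=V(t)$ strongly in $\Hilb$, uniformly in $t$, proving \eqref{eq:129} (so $\mathrm{GMM}(\Phi,V_0,T)\neq\emptyset$) and the first part of \eqref{eq:130}; combining the discrete inequality with lower semicontinuity yields $\FF(\bar V_\tau(t))\to\FF(V(t))$ for every $t$. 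The main obstacle throughout is Step 2 — simultaneously establishing that the relaxed slope is a strong upper gradient and is weakly lower semicontinuous on bounded sets, and reading it off the limiting subdifferential — which is exactly where the $L^4$-compactness of eigenfunctions from Assumption \eqref{ass:main} is indispensable and which occupies Sections \ref{sec:regularity} and \ref{section:F}.
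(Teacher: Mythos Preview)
Your Step 2 contains a genuine gap. You assert that the relaxed slope is ``lower semicontinuous with respect to weak $\Hilb$-convergence on $\Kc c$ by construction'', but the relaxed slope of \cite{rs} is the lsc envelope with respect to \emph{strong} convergence, and this is not a cosmetic distinction here. Your attempt to upgrade it via the decomposition $\xi=\sum_i\partial_i\varphi\,u_i^2+g$ runs into the same wall: even granting that the eigenfunction part is strongly compact (Proposition~\ref{prop:compactness}), the convex piece $g\in\partial^-\GG_\theta$ is only \emph{strongly--weakly} closed by maximal monotonicity, not weakly--weakly closed. With $V_n\weakto V$ and $g_n\weakto g$ you cannot conclude $g\in\partial^-\GG_\theta(V)$, because passing to the limit in $\GG_\theta(W)\ge\GG_\theta(V_n)+\langle g_n,W-V_n\rangle$ needs control of $\GG_\theta(V_n)$ and of the cross term $\langle g_n,V_n\rangle$, neither of which is available. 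Consequently you cannot pass to the limit in the discrete energy--slope inequality, and the whole of Step~3 (which you correctly say follows from equality in the energy identity) never gets off the ground. Note also that the specific decomposition $\sum_i\partial_i\varphi\,u_i^2+g$ is only established under the extra hypothesis \eqref{eq:29} of Section~2.F (Lemma~\ref{ELk}); under 2.A--2.E alone one has the weaker form \eqref{eq:109} of Lemma~\ref{Elk1}.

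The paper's route is the opposite of yours: it proves \emph{strong} convergence of the discrete solutions \emph{first} (Proposition~\ref{primastima}), and only then invokes \cite[Theorem~3]{rs} together with the chain rule (Proposition~\ref{prop:chain}). The strong-convergence proof is a time-integrated compensated-compactness argument rather than a pointwise slope argument: one writes $-V_k'(t)=A_k(t)+B_k(t)-\theta\bar V_k(t)$ with $A_k(t)\in\subH(\bar V_k(t))$ and $B_k(t)\in\partial^-\GG_\theta(\bar V_k(t))$; the primitive $\mathcal A_k(t)=\int_0^t\mathrm e^{-2\theta s}A_k(s)\,\d s$ is relatively compact in $C^0([0,T];\Hilb)$ by Ascoli--Arzel\`a (here the $L^4$-compactness of eigenfunctions enters), so integrating by parts in time one passes to the limit in $\int_0^T\mathrm e^{-2\theta t}\langle A_k,V_k\rangle\,\d t$ despite having only weak convergence of each factor. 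This yields a limsup inequality for $\int_0^T\mathrm e^{-2\theta t}\langle B_k,V_k\rangle\,\d t$, which combined with the monotonicity of $\partial^-\GG_\theta$ (used now as an inequality between pairings, not as a closure statement) forces $|V_k(S)|\to|V(S)|$ for every $S$. Once strong convergence is in hand, the limiting subdifferential is genuinely closed along the sequence and \cite{rs} applies.
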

\begin{remark}[Affine projection and minimal selection]\label{minimalselection}
We recall that the affine hull of a set $A\subset\Hilb$ is defined as \[
{\mathrm{aff}}(A)=\Big\{\sum_it_ia_i : t_i\in\mathbb{R},\;\sum_it_i=1,\;a_i\in A\Big\}.
\] 
Notice that we have retrieved the minimal section
principle~\eqref{eq:127bis} (even in the stronger formulation~\eqref{eq:127ter}) in this non convex case:
though in general $\partial_L^-\FF(V(t))$ is not convex, $V'(t)$ is its unique element of minimal norm for a.e. $t\in(0,T)$.
\end{remark}
Under the sole $\rmC^1$ assumption on $\varphi$ of Section 2.E,
the precise characterization of
$\partial_L^\circ\FF(V(t))$ is not immediate.
A first piece of information is provided by the following proposition.
\begin{proposition}
  \label{prop:better}
  Let $V$ be a solution to \eqref{eq:128} and let us denote by $O\subset [0,T]$ the open set
  \begin{equation}
    \label{eq:132}
    O:=\Big\{t\in [0,T]:\llambda^{\J+1}(V(t))\in \Int(\Lambda^{\J+1})\Big\}.
  \end{equation}
  For every $t\in O$ the set $\uU^\J(V(t))$ satisfies the minimality
  property
  \begin{subequations}
    \begin{equation}
      \label{eq:minimality}
      \text{if }\uu',\uu''\in \uU^\J(V(t))\text{ then there exists
      }\boldsymbol \nu\in \{-1,1\}^\J:\quad
      u'_j=\nu_ju''_j\quad j=1,\cdots,\J,    
    \end{equation}
    so that
    \begin{equation}
    \big\{(u_1^2,\cdots,u_J^2):
    (u_1,u_2,\cdots,u_J)\in \uU^\J(V(t)) \big\}\quad
    \text{contains a unique
  element},\label{eq:minimality2}
\end{equation}
  \end{subequations}
  and for every $\uu(t)\in
  \uU^\J(V(t))$ we have
  \begin{equation}
    \label{eq:133}
    V'(t)+\sum_{j=1}^J \partial_i\varphi(\llambda^\J(V(t)))u_i^2(t)\in - \partial_F^-\GG(V(t)).
  \end{equation}
\end{proposition}
The refined structural condition \eqref{eq:29} of Section 2.F
guarantees that the decomposition \eqref{eq:133} holds a.e.~in $(0,T)$.
\begin{theorem}
  \label{thm:main2}
  Under the same assumptions of Theorem \ref{thm:main1}, let us also
  assume that \eqref{eq:29} of Section 2.F holds, let 
  $V$ be a solution of \eqref{eq:127bis}, and let $D$ be the set
  (of full Lebesgue measure) where $V$ is differentiable and the
  inclusion \eqref{eq:127bis} holds.
  Then for every $t\in D$
  there exists $\uu(t)\in \uU^\J(V(t))$ satisfying \eqref{eq:133}.
\end{theorem}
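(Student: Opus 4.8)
The plan is to localise the problem at a fixed time, to reduce \eqref{eq:133} to a pointwise description of the limiting subdifferential $\partial_L^-\FF(V(t))$, and to reconcile the non-convex and only directionally differentiable first-order behaviour of the ordered eigenvalues at crossings with the $-\theta$-convex term $\GG$ by means of a majorization inequality which the compatibility condition \eqref{eq:29} is designed to exploit. Fix $t\in D$ and abbreviate $V:=V(t)$, $\llambda:=\llambda^\J(V)$, $\xi:=-V'(t)$; by Theorem~\ref{thm:main1} one has $\xi\in\partial_L^\circ\FF(V)\subseteq\partial_L^-\FF(V)$, and for $\uu=(u_1,\dots,u_\J)\in\uU^\J(V)$ set $\eta(\uu):=\sum_{i=1}^\J\partial_i\varphi(\llambda)\,u_i^2$, which lies in $\Hilb$ because eigenfunctions belong to $L^4(\sfD,\mm)$. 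One must produce some $\uu$ with $\xi-\eta(\uu)\in\partial_F^-\GG(V)$. If $t$ belongs to the open set $O$ of \eqref{eq:132}, i.e.\ $\lambda_1(V),\dots,\lambda_{\J+1}(V)$ are pairwise distinct, then $\uu$ is unique, $\HH$ is of class $\rmC^1$ near $V$, and the assertion is Proposition~\ref{prop:better}; the real content therefore lies on the closed set $D\setminus O$, where crossings occur among the first $\J+1$ eigenvalues and $\HH$ fails to be differentiable.

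The core of the argument will be a \emph{selection lemma}: under \eqref{eq:29}, for \emph{every} $W\in\K$ and \emph{every} $\uu\in\uU^\J(W)$ the function $\eta_W(\uu):=\sum_{i=1}^\J\partial_i\varphi(\llambda^\J(W))u_i^2$ belongs to the Fr\'echet \emph{super}differential $\partial_F^+\HH(W)$. Its proof rests on first-order perturbation theory for ordered eigenvalues (to be developed in Section~\ref{sec:regularity}): if $Q$ is a maximal cluster of coincident eigenvalues among $\lambda_1(W),\dots,\lambda_\J(W)$, located in positions $q_1<\dots<q_{|Q|}$, with common value $\mu$ and eigenspace $E$ of dimension $m\ge|Q|$, then along $W+s\Psi$, as $s\downarrow0$, the eigenvalues in positions $Q$ expand as $\mu+s\,\nu_j^\uparrow(A_Q(\Psi))+o(s)$, $j=1,\dots,|Q|$, where $A_Q(\Psi)$ is the symmetric $m\times m$ matrix with entries $\int_\sfD\Psi\,u_au_b\,\d\mm$ in an orthonormal basis $(u_a)$ of $E$ extending the eigenfunctions of $\uu$ in positions $Q$, and $\nu_1^\uparrow(A)\le\dots\le\nu_m^\uparrow(A)$ denote its ordered eigenvalues. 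Hence the cluster contributes $\sum_{j=1}^{|Q|}\partial_{q_j}\varphi(\llambda^\J(W))\,\nu_j^\uparrow(A_Q(\Psi))$ to the one-sided directional derivative $\HH'(W;\Psi)$, while it contributes $\sum_{j=1}^{|Q|}\partial_{q_j}\varphi(\llambda^\J(W))\,\big(A_Q(\Psi)\big)_{jj}$ to $\langle\eta_W(\uu),\Psi\rangle$. Now \eqref{eq:29} furnishes exactly $\partial_{q_1}\varphi(\llambda^\J(W))\ge\dots\ge\partial_{q_{|Q|}}\varphi(\llambda^\J(W))$, together with $\partial_{q_{|Q|}}\varphi(\llambda^\J(W))\ge0$ in the case $q_{|Q|}=\J$ (the only case with $m>|Q|$); combining this with the elementary majorization inequality
\begin{equation*}
  \sum_{j=1}^{k}c_j\,\nu_j^\uparrow(A)\ \le\ \sum_{j=1}^{k}c_j\,A_{jj}
  \qquad\Big(A=A^\top\in\R^{m\times m},\ \ c_1\ge\dots\ge c_k,\ \ c_k\ge0\ \text{unless}\ k=m\Big),
\end{equation*}
which follows by Abel summation from $\sum_{l\le j}A_{ll}=\operatorname{tr}(P_U A)\ge\min_{\dim U=j}\operatorname{tr}(P_U A)=\sum_{l\le j}\nu_l^\uparrow(A)$ (taking $U$ the span of the first $j$ basis vectors, with equality when $j=m$), one obtains $\HH'(W;\Psi)\le\langle\eta_W(\uu),\Psi\rangle$ for all $\Psi$; the uniform first-order expansion of $\HH$ then upgrades this directional estimate to $\eta_W(\uu)\in\partial_F^+\HH(W)$. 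This selection lemma is the main obstacle of the whole proof: the functionals $V\mapsto\lambda_j(V)$ are genuinely non-convex across crossings and only directionally differentiable there, and it is solely the matching between the weights $\partial_{q_j}\varphi$ and the ordered perturbed eigenvalues imposed by \eqref{eq:29} that makes a \emph{linear} majorant $\eta_W(\uu)$ of $\HH$ available.

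Granted the lemma, the decomposition is immediate: given any $\zeta\in\partial_F^-\FF(W)=\partial_F^-(\HH+\GG)(W)$, adding the defining ($\liminf$) inequality for $\zeta$ to the ($\limsup$) inequality expressing $\eta_W(\uu)\in\partial_F^+\HH(W)$ gives directly $\zeta-\eta_W(\uu)\in\partial_F^-\GG(W)$; thus, for any $\uu\in\uU^\J(W)$, every Fr\'echet subgradient of $\FF$ at $W$ splits as $\eta_W(\uu)+g$ with $g\in\partial_F^-\GG(W)$. To conclude for $\partial_L^-\FF$, pick — according to Definition~\ref{subdiff} — sequences $V_n\to V$ strongly in $\Hilb$ with $\FF(V_n)\to\FF(V)$ and $\zeta_n\in\partial_F^-\FF(V_n)$, $\zeta_n\weakto\xi$, and split $\zeta_n=\eta_{V_n}(\uu_n)+g_n$ with $\uu_n=(u_{n,i})\in\uU^\J(V_n)$, $g_n\in\partial_F^-\GG(V_n)$. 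By the continuity of eigenvalues under strong $L^2$-convergence of potentials, Remark~\ref{rem:equivalent} and Assumption~\eqref{ass:main}, along a subsequence $\lambda_i(V_n)\to\lambda_i(V)$ and $u_{n,i}\to u_i$ strongly in $\V$ and in $L^4(\sfD,\mm)$ with $\uu:=(u_i)\in\uU^\J(V)$; hence $u_{n,i}^2\to u_i^2$ strongly in $\Hilb$ and, $\varphi$ being of class $\rmC^1$, $\eta_{V_n}(\uu_n)\to\eta(\uu)$ strongly in $\Hilb$. Therefore $g_n=\zeta_n-\eta_{V_n}(\uu_n)\weakto\xi-\eta(\uu)$ in $\Hilb$, while $\GG(V_n)=\FF(V_n)-\HH(V_n)\to\GG(V)$ by continuity of $\HH$; since $\GG$ is lower semicontinuous and $-\theta$-convex, $\partial_F^-\GG=\partial_L^-\GG$, and the very definition of $\partial_L^-\GG$ yields $\xi-\eta(\uu)\in\partial_L^-\GG(V)=\partial_F^-\GG(V)$, which is \eqref{eq:133}. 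The possible non-compactness of the sublevels of $\FF$ plays no role here, as the terms $\eta_{V_n}(\uu_n)$ stay precompact in $\Hilb$ thanks to the $L^4$-compactness of eigenfunctions in \eqref{ass:main}.
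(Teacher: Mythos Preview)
Your overall architecture matches the paper exactly: a superdifferentiability result for $\HH$ (your ``selection lemma'' is the paper's Theorem~\ref{thm:superdiff}) followed by a closure argument under limits (your final paragraph is the paper's Lemma~\ref{ELk}(2)). The closure step is carried out identically in both.

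Where you differ is in the proof of the selection lemma. You compute one-sided directional derivatives of the ordered eigenvalues via perturbation theory, and bound the cluster contribution by a majorization inequality; the paper instead rewrites $\HH$ through the partial sums $\sigma_k=\sum_{h\le k}\lambda_h$, exploits that each $\sigma_{k,c}$ is \emph{globally concave} as an infimum of affine functionals (Lemma~\ref{le:Lip}), and observes that $\sigma_{k,c}$ is Fr\'echet differentiable precisely at the non-crossing indices (Lemma~\ref{le:1}). Both approaches use the same Abel summation (your majorization proof and the paper's \eqref{eq:141}), so the algebraic content is equivalent.

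The one point where your argument is incomplete is the sentence ``the uniform first-order expansion of $\HH$ then upgrades this directional estimate to $\eta_W(\uu)\in\partial_F^+\HH(W)$''. A bound on all one-sided directional derivatives of a Lipschitz function does not, in infinite dimensions, yield Fr\'echet superdifferentiability without additional structure; one needs $\HH(W+h)-\HH(W)\le \HH'(W;h)+o(|h|)$ uniformly for $|h|$ small, and individual eigenvalue maps $\lambda_j$ are neither convex nor concave, so this uniformity is not free. The paper sidesteps the issue because concavity of $\sigma_{k,c}$ gives the \emph{exact} inequality $\sigma_{k,c}(W+h)-\sigma_{k,c}(W)\le\langle\xi_k,h\rangle$ for crossing indices (no $o(|h|)$ remainder), while at non-crossing indices $\sigma_{k,c}$ is genuinely Fr\'echet differentiable; multiplying by the signs $\gamma_k$ prescribed by \eqref{eq:29} and summing yields the Fr\'echet superdifferential directly. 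Your perturbation argument can be completed along the same lines --- indeed the shortest way to justify your uniformity claim is to invoke exactly this concavity of the partial sums --- but as written the step is asserted rather than proved.
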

The proof of our main results will follow from the analysis carried
out in the next Sections.
First of all, in Section~\ref{sec:regularity}, we study the regularity and the differentiability
properties of the functional $\HH$.
In Section \ref{section:F} we use these results in order to prove the
existence of discrete solutions to the Minimizing Movement scheme and
to obtain crucial structural properties of the limiting
subdifferential of $\FF$. A crucial step will also be the
Chain Rule formula in Proposition \ref{prop:chain}.

Section \ref{sec:minmov} contains the basic estimates on the
Minimizing Movement solutions.
It is not difficult to show that
a \emph{weak} Generalized Minimizing Movement exists (i.e.~a curve $V$
which is the pointwise \emph{weak} limit of a subsequence
$V_{\tau(k)}$).
The main improvement is to show that such a curve is also a
\emph{strong}
Generalized Minimizing Movement according to \eqref{eq:129}.
This fact is not obvious, since we did not assume that $\FF$ has
compact sublevels: it will be obtained by
using the compactness properties of the subdifferential of $\HH$
and a compensated compactness argument, see Proposition
\ref{primastima}.

At that point we will have all the ingredients to apply the results of
\cite{rs}:
the final discussion will be carried out in Section \ref{subsec:proofs}.

\section{Regularity and differentiability properties of eigenvalues and eigenfunctions}\label{sec:regularity}
In this Section we will always keep the structural assumptions 2.A--2.E of
Section 2;
we will explicitly mention the more refined property \eqref{eq:29} of
Section 2.F, whenever it is involved.

We will study the regularity properties of $\HH$ with
respect to $V$.
We will still denote by $\E_V:\Hilb\to (-\infty,+\infty]$ the (extended) quadratic form in
$\Hilb$ induced
by $\E_V$:
\begin{equation}
  \label{eq:15}
  \E_V(u):=
  \begin{cases}
    \E_V(u,u)&\text{if }u\in D(\E_V),\\
    +\infty&\text{if }u\in \Hilb\setminus D(\E_V).
  \end{cases}
\end{equation}
It is not difficult to check that $u\mapsto \E_V(u)+(\lambda_{\rm
  min})_-\,|u|^2$
is a convex and
lower semicontinuous functional on $\Hilb$.
\subsection{Weak continuity and Lipschitzianity}
\label{subsec:regularity}
\begin{lemma}[Weak continuity of eigenvalues and eigenfunctions]
  \label{le:conteigpot}
Let $V_n\in \K$, $n\in \N$, be a sequence weakly converging in
$\Hilb$ to $V\in
\K$ as $n\to\infty$.
For all $k,\J\in \N$ we have \[
\la_k(V_n)\rightarrow \la_k(V)\qquad \text{as }n\to+\infty
\]
and every sequence $\uu_n\in \uU^\J(V_n)$ admits a subsequence $m\mapsto
\uu_{n(m)}$ and a limit $\uu\in \uU^\J(V)$ such that
\begin{equation}
  \label{eq:19}
  u_{n(m),k}\to u_k\quad\text{strongly in }
  \dom\cap L^4(\sfD,\mm)\quad\text{for every }k\in \{1,\cdots,\J\}.
\end{equation}
\end{lemma}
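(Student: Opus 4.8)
The plan is to prove Lemma~\ref{le:conteigpot} by combining the min-max characterization~\eqref{eq:21} of the eigenvalues with a compactness argument for the eigenfunctions, exploiting the compact embedding~\eqref{eq:7} and the $L^4$-compactness assumption~\eqref{ass:main}. The key structural fact in the background is the Mosco convergence of the quadratic forms $\E_{V_n}$ to $\E_V$ along the weakly convergent sequence $V_n \weakto V$ (this is the content of the Appendix material referenced by Lemma~\ref{le:A1}); I would invoke it in the following concrete form. Since the $V_n$ all lie in some fixed $\K[c]$ (a weakly convergent sequence is bounded, so $|V_n|\le c$ and $\GG_\theta(V_n)\le c$ for $c$ large, using lower semicontinuity to stay in $\K[c]$), we have the uniform lower bound $V_n \ge V_{\rm min}$, hence $\E_{V_n}(u) \ge \lambda_{\rm min}|u|^2$ for all $n$.

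First I would prove the upper bound $\limsup_n \la_k(V_n) \le \la_k(V)$. Fix an optimal (or nearly optimal) $k$-dimensional subspace $E_k \subset D(\E_V)$ for $\la_k(V)$; by the density property~\eqref{eq:58} I may take $E_k \subset \V_4 = \V \cap L^4(\sfD,\mm)$, spanned by $k$ functions $w_1,\dots,w_k$. For such test functions, $\int_\sfD V_n w_i w_j \,\d\mm \to \int_\sfD V w_i w_j\,\d\mm$ because $w_iw_j \in L^2(\sfD,\mm)$ and $V_n \weakto V$ in $L^2$; also $\E(w_i,w_j)$ is fixed. Hence the $k\times k$ matrices $(\E_{V_n}(w_i,w_j))$ converge to $(\E_V(w_i,w_j))$ and the Gram matrices $(\langle w_i,w_j\rangle)$ are fixed, so the generalized Rayleigh quotient on $E_k$ passes to the limit, giving $\limsup_n \la_k(V_n) \le \max_{u\in E_k\setminus\{0\}} \E_V(u)/|u|^2$, which is (nearly) $\la_k(V)$.

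Next, the harder direction: $\liminf_n \la_k(V_n) \ge \la_k(V)$, together with strong convergence of eigenfunctions. Fix $\J \ge k$ and pick $\uu_n \in \uU^\J(V_n)$. From the normalization $|\uu_{n,i}|=1$ and $\E_{V_n}(\uu_{n,i}) = \la_i(V_n)$, together with the already-established upper bound $\la_i(V_n) \le \la_i(V) + o(1) \le \bar\lambda$ for some fixed $\bar\lambda$, and the coercivity $\E(u)+|u|^2 \ge M^{-1}\|u\|_\V^2$ (using $\int V_n \uu_{n,i}^2 \ge V_{\rm min}$ to control $\E(\uu_{n,i})$ from $\E_{V_n}$), the sequence $\uu_{n,i}$ is bounded in $\V$. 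By~\eqref{eq:7} it is relatively compact in $\Hilb$, and by~\eqref{ass:main} (applicable since $\uu_{n,i} \in \mathrm U[c,\bar\lambda]$) relatively compact in $L^4(\sfD,\mm)$; passing to a subsequence, $\uu_{n,i} \weakto u_i$ in $\V$, strongly in $\Hilb$ and in $L^4$, and $\la_i(V_n) \to \ell_i \le \la_i(V)$. I then pass to the limit in the eigenvalue equation~\eqref{eq:10}: for a fixed test function $v \in \V_4$, $\E(\uu_{n,i},v) \to \E(u_i,v)$ (weak convergence in $\V$), $\int_\sfD V_n \uu_{n,i} v\,\d\mm \to \int_\sfD V u_i v\,\d\mm$ (here I use $\uu_{n,i} \to u_i$ strongly in $L^4$, $v \in L^4$, so $\uu_{n,i}v \to u_i v$ strongly in $L^2$, against $V_n \weakto V$ in $L^2$), and $\ell_i \int_\sfD \uu_{n,i}v\,\d\mm \to \ell_i \int_\sfD u_i v\,\d\mm$. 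By density~\eqref{eq:58} this identifies $u_i$ as a $(V,\ell_i)$-eigenfunction; strong $\Hilb$-convergence preserves $|u_i|=1$ and orthogonality, so $\uu := (u_1,\dots,u_\J)$ is an orthonormal system of eigenfunctions of $\E_V$ with eigenvalues $\ell_1 \le \dots \le \ell_\J$. Since $\ell_k \le \la_k(V)$ and the eigenvalues are listed in increasing order, a counting argument (the $\ell_i$ for $i\le k$ give $k$ orthonormal eigenfunctions with eigenvalues $\le \la_k(V)$, forcing $\ell_i = \la_i(V)$ by the min-max principle) yields $\ell_i = \la_i(V)$ for all $i$, so $\uu \in \uU^\J(V)$ and $\la_k(V_n) \to \la_k(V)$. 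Finally, strong convergence in $\V$ (not merely weak) follows from convergence of norms: $\E(\uu_{n,i}) = \la_i(V_n) - \int_\sfD V_n \uu_{n,i}^2\,\d\mm \to \la_i(V) - \int_\sfD V u_i^2\,\d\mm = \E(u_i)$, using strong $L^4$-convergence of $\uu_{n,i}$ to handle the quadratic term $\int V_n \uu_{n,i}^2$; combined with weak $\V$-convergence and the equivalence of $\E(\cdot)+|\cdot|^2$ with $\|\cdot\|_\V^2$, this gives $\uu_{n,i} \to u_i$ strongly in $\V$, and~\eqref{eq:143} then upgrades this to strong convergence in $L^4$, establishing~\eqref{eq:19}.

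The main obstacle is the passage to the limit in the potential term $\int_\sfD V_n u_n^2\,\d\mm$ and $\int_\sfD V_n u_n v\,\d\mm$: weak convergence $V_n \weakto V$ alone does not suffice against merely weakly convergent $u_n$, so the argument genuinely relies on the $L^4$-compactness built into assumption~\eqref{ass:main} to make $u_n^2$ and $u_n v$ strongly convergent in $L^2$; without it the identification of the limit as an eigenfunction would fail. A secondary subtlety is the careful eigenvalue-counting step needed to conclude $\ell_i = \la_i(V)$ rather than merely $\ell_i \le \la_i(V)$, which must account for possible multiplicities.
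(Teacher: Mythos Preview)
Your argument is correct, but takes a different route from the paper. The paper's proof (Appendix~\ref{sec:appmosco}) works at the operator level: it first shows Mosco convergence of the quadratic forms $\E_{V_n}\to\E_V$ (via a truncation argument for the $\liminf$ inequality and the density~\eqref{eq:58} for the recovery sequence), then deduces \emph{norm} convergence of the resolvents $\rmR_{V_n}^\beta\to\rmR_V^\beta$ in $\mathcal L(\Hilb)$ (Lemma~\ref{le:A1}), and finally invokes classical spectral perturbation theory for compact self-adjoint operators to obtain convergence of eigenvalues and, by passing to the limit in the resolvent equation, of eigenfunctions. Your approach is instead direct: a min-max upper bound with fixed test functions in $\V_4$, a compactness extraction, passage to the limit in the weak eigenvalue equation~\eqref{eq:10}, and a counting step. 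The chief structural difference is where assumption~\eqref{ass:main} enters: in your scheme the $L^4$-compactness is used centrally---to make $\uu_{n,i}v$ and $\uu_{n,i}^2$ strongly $L^2$-convergent against the merely weakly convergent $V_n$, which is exactly what is needed to identify the limit equation---whereas the paper's resolvent argument yields eigenvalue convergence and strong $\V$-convergence of eigenfunctions without~\eqref{ass:main}, invoking it only at the very end for the $L^4$ upgrade. Your route is shorter and avoids the resolvent machinery; the paper's route decouples the general spectral-continuity statement from the $L^4$ refinement. One small slip: weak convergence of $V_n$ bounds $|V_n|$ but does \emph{not} bound $\GG_\theta(V_n)$, so your claim that $V_n\in\Kc c$ for some fixed $c$ is unjustified as written; this is the same informality present in the paper's juxtaposition of the lemma's hypotheses with the formulation of~\eqref{ass:main}, and is harmless since all applications in the paper have $V_n\in\Kc c$.
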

The proof of Lemma~\ref{le:conteigpot} is well-known
(see e.g.~\cite[Proposition~2.5]{bgrv} for the first part of the claim
and~\cite[Proposition~3.69 and Theorem~3.71]{attouch} for the second
one
in the case of elliptic operators such as the Dirichlet Laplacian).
The $L^4(\sfD,\mm)$-convergence of eigenfunctions in
\eqref{eq:19} is a consequence of assumption \eqref{ass:main} in Section 2.D.
We provide a detailed proof of Lemma \ref{le:conteigpot}
in Appendix~\ref{sec:appmosco}.

Before stating the next corollary, we recall that $\uU^\J(V)$ denotes the collection of all the orthonormal systems of eigenfunctions associated with $\llambda^\J(V)$, see~\eqref{eq:11}.
\begin{corollary}
  \label{cor:compactnessU}
  For every $c\ge c_o$ the sets
  \begin{equation}
    \label{eq:77}
    \begin{aligned}
      \brmU^\J[c]:={}&\bigcup \Big\{\uU^\J(V):V\in \Kc c \Big\}\\
      \rmU_j[c]:={}& \Big\{u\text{ is a normalized
        $(V,\lambda_j(V))$-eigenfunction with $V\in \Kc c$}
      \Big\},       
    \end{aligned}
  \end{equation}
  are nonempty and compact in $(\V_4)^\J\subset \big(L^4(\sfD,\mm)\big)^\J$ and
  in $\V_4$ respectively.
  In particular, for every $c\ge c_o$ 
\begin{equation}
  \label{eq:56bis}
  A(c):=\sup\Big\{\|u\|_{L^4(\sfD,\mm)} :u\in \mathrm U_j[c],\ 1\le
  j\le \J\Big\}<+\infty.
\end{equation}
\end{corollary}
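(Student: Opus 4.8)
The plan is to deduce Corollary \ref{cor:compactnessU} almost immediately from Lemma \ref{le:conteigpot}, treating it as a "sequential compactness implies compactness" statement, together with the fact that the spaces $\V_4$ and $(\V_4)^\J$ are metrizable. First I would observe that $\brmU^\J[c]$ and $\rmU_j[c]$ are nonempty: indeed $\Kc c$ is nonempty for $c\ge c_o$ (as recorded right after \eqref{eq:55}), and for any $V\in \Kc c$ the spectral theory of $\E_V$ summarized around \eqref{eq:21} produces an orthonormal system of eigenfunctions, so $\uU^\J(V)\ne\emptyset$ and each $\rmU_j[c]$ contains at least one element.

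Next I would prove sequential compactness. Take an arbitrary sequence $\uu_n\in \brmU^\J[c]$; by definition each $\uu_n\in \uU^\J(V_n)$ for some $V_n\in \Kc c$. Since $\Kc c$ is a bounded, closed, convex subset of the Hilbert space $\Hilb$, it is weakly compact, so up to a subsequence $V_n\weakto V$ for some $V\in\Kc c$. Now Lemma \ref{le:conteigpot} applies verbatim: a further subsequence $\uu_{n(m)}$ converges componentwise, strongly in $\V\cap L^4(\sfD,\mm)=\V_4$, to some $\uu\in\uU^\J(V)\subset\brmU^\J[c]$. Hence every sequence in $\brmU^\J[c]$ has a subsequence converging in $(\V_4)^\J$ to a point of $\brmU^\J[c]$; since $(\V_4)^\J$ is a metric space, this is exactly compactness. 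The argument for $\rmU_j[c]$ is identical: if $u_n$ is a normalized $(V_n,\lambda_j(V_n))$-eigenfunction with $V_n\in\Kc c$, extract $V_n\weakto V\in\Kc c$, complete $u_n$ to a system $\uu_n\in\uU^\J(V_n)$ with $u_{n,j}=u_n$, apply the lemma to get $\uu_{n(m)}\to\uu\in\uU^\J(V)$ strongly in $(\V_4)^\J$, and note that the $j$-th component of the limit is a normalized $(V,\lambda_j(V))$-eigenfunction, so $u_{n(m)}\to u_j\in\rmU_j[c]$ strongly in $\V_4$.

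Finally, \eqref{eq:56bis} is a direct consequence: a compact subset of the normed space $\big(L^4(\sfD,\mm)\big)^\J$ (into which $(\V_4)^\J$ injects continuously, since $\|\cdot\|_{L^4}$ is controlled on $\V_4$) is bounded, so $\sup\{\|u\|_{L^4}:u\in\rmU_j[c],\ 1\le j\le\J\}<+\infty$; alternatively one can invoke \eqref{eq:56} with $\bar\lambda:=\max_{1\le j\le\J}\sup_{V\in\Kc c}\lambda_j(V)$, which is finite because $V\mapsto\lambda_j(V)$ is weakly continuous (again Lemma \ref{le:conteigpot}) on the weakly compact set $\Kc c$.

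The only point requiring a little care — and the closest thing to an obstacle — is the passage from sequential compactness to genuine compactness, i.e. making sure $\V_4$ (equivalently $\big(L^4(\sfD,\mm)\big)^\J$, or the metrizable weak topology on bounded sets of $\Hilb$ used implicitly in Lemma \ref{le:conteigpot}) is a metric space so that the Bolzano–Weierstrass characterization of compactness is legitimate; this is immediate since $L^4$ and finite products thereof are Banach spaces. Everything else is bookkeeping on top of Lemma \ref{le:conteigpot} and the weak compactness of $\Kc c$.
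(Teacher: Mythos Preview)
Your proof is correct and follows essentially the same approach as the paper: weak compactness of $\Kc c$ together with Lemma \ref{le:conteigpot} yields sequential compactness in $(\V_4)^\J$, hence compactness. The paper is slightly more concise (it treats only $\brmU^\J[c]$, noting that the case of $\rmU_j[c]$ follows, and records the finiteness of $\ell_\J(c)=\sup_{V\in\Kc c}\lambda_\J(V)$ explicitly), but the argument is the same.
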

\begin{proof}
  It is clearly sufficient to prove the statement for $\brmU^\J[c]$.
  Thanks to Lemma~\ref{le:conteigpot}, the map
  $\lambda_\J:\K\to \R$ 
  is continuous with respect to the weak topology of $\Hilb$.
  Since $\Kc c $ is weakly compact (being
  a closed bounded convex set of $\Hilb$), we have that
  \begin{equation}
    \ell_\J(c):=\sup_{V\in \Kc c }\la_\J(V)<+\infty,\quad
    \brmU^\J[c]\subset \big(\rmU[c,\ell_\J(c)]\big)^\J,\label{eq:84}
  \end{equation}
see~\eqref{ass:main}. 
If $\uu_n\in \uU^\J(V_n)$, $n\in \N$, is a sequence
  with
  $V_n\in \Kc c $,  
we can find an increasing subsequence $k\mapsto n(k)$
and a limit $V\in \Kc c $ such that $V_{n(k)}\weakto V$ weakly in
$\Hilb$;
up to extracting a further (not relabeled)
subsequence,
Lemma \ref{le:conteigpot} shows that
$\uu_{n(k)}\to\uu\in \uU^\J(V)\subset \brmU^\J[c]$
strongly in $(\V_4)^\J$ as $k\to\infty$.
\end{proof}
We now introduce the  family of functions $\sigma_k:\K\to \Hilb$
\begin{equation}
  \label{eq:70}
  \sigma_k(V):=\sum_{h=1}^k \lambda_h(V)\quad\text{for every }V\in \K,
  \ k\in \N,
\end{equation}
which will play a crucial role
in the following, since they have a nice representation formula,
which involves orthonormal sets of cardinality $k$.
We refer to \cite{hiye,ovwo} for a more refined investigation in
finite dimension.

If $E\subset \Hilb$ is a subspace of $\Hilb$, we denote by $\Ort_k(E)$
the subset of orthonormal frames of $E^k$
\begin{equation}
  \label{eq:27}
  \Ort^k(E):=\Big\{\ww=(w_1,\cdots,w_k)\in E^k:\langle w_i,w_j\rangle =\delta_{ij}\Big\},
\end{equation}
and we have
\begin{equation}
\label{eq:71}
  \begin{aligned}
    \sigma_k(V)&=\min
    \Big\{\sum_{h=1}^k \E_V(w_h):\ww=(w_1,\cdots,w_k)\in \Ort^k(\V_4)\Big\},
  \end{aligned}
\end{equation}
where the minimum in \eqref{eq:71} is attained precisely at the
elements of $\uU^k(V)$.
An important property of the functions $\lambda_k,\sigma_k$ is their 
Lipschitzianity in $\Kc c $.

\begin{lemma}
  \label{le:Lip}
  Under the assumptions of Section 2.A--2.E, for every
  $k\in \{1,\cdots,\J\}$  and $c\ge c_o$ the functions 
  $V\mapsto \lambda_k(V)$ and $V\mapsto \sigma_k(V)$ are
  weakly continuous in $\K$ and 
  Lipschitz in $\Kc c $.
  Moreover, $\sigma_k$ is concave and
  the
  concave and globally Lipschitz function $\sigma_{k,c}:
  \Hilb\to \R$ defined by
  \begin{equation}
    \label{eq:72}
    \begin{aligned}
       \sigma_{k,c}(V):= \min \Big\{&\sum_{h=1}^k
      \E(w_h)+\int_\sfD V w_h^2\,\d\mm:
      \ww\in \brmU^\J[c]\Big\}
    \end{aligned}
  \end{equation}
  satisfies $  \sigma_{k,c}\ge \sigma_k$ on $\K$ and
  coincides with $\sigma_k$ on $\Kc c $.
\end{lemma}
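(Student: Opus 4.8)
The plan is to establish Lemma~\ref{le:Lip} by exploiting the min-max representation \eqref{eq:71} of $\sigma_k$ together with the compactness result of Corollary~\ref{cor:compactnessU}. First I would recall that weak continuity of $\lambda_k$ (hence of $\sigma_k=\sum_{h\le k}\lambda_h$) on $\K$ is exactly the content of Lemma~\ref{le:conteigpot}, so nothing new is needed there. For the concavity of $\sigma_k$, the key observation is that \eqref{eq:71} expresses $\sigma_k(V)$ as an \emph{infimum} over $\ww\in\Ort^k(\V_4)$ of the quantity $\sum_{h=1}^k\bigl(\E(w_h)+\int_\sfD V w_h^2\,\d\mm\bigr)$, which for fixed $\ww$ is an affine function of $V\in\Hilb$ (the map $V\mapsto\int_\sfD V w_h^2\,\d\mm$ is linear, using $w_h\in L^4$ so $w_h^2\in L^2$). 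An infimum of affine functions is concave, so $\sigma_k$ is concave on $\K$; note the infimum is finite because it is attained on $\uU^k(V)$.

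Next I would treat the globally Lipschitz regularization $\sigma_{k,c}$. Here the point is that in \eqref{eq:72} the minimization is restricted to the \emph{fixed} set $\brmU^\J[c]$, which by Corollary~\ref{cor:compactnessU} is a nonempty compact subset of $(\V_4)^\J$; in particular there is a uniform bound $\sup\{\|w_h\|_{L^4}:\ww\in\brmU^\J[c]\}=:A(c)<\infty$. For each fixed admissible frame $\ww$, the function $V\mapsto\sum_{h=1}^k\bigl(\E(w_h)+\int_\sfD V w_h^2\,\d\mm\bigr)$ is affine with slope $\sum_h w_h^2\in L^2$ of norm controlled by $k\,A(c)^2$; hence $\sigma_{k,c}$, being the pointwise infimum of this equi-Lipschitz family of affine functions over a nonempty index set, is a well-defined, real-valued, concave, globally Lipschitz function on all of $\Hilb$ with Lipschitz constant at most $k\,A(c)^2$. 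Restricting to $\Kc c$ then shows $\sigma_k$ is Lipschitz there. The Lipschitzianity of each individual $\lambda_k$ on $\Kc c$ follows from that of the $\sigma_h$'s via $\lambda_k=\sigma_k-\sigma_{k-1}$ (with $\sigma_0\equiv 0$).

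It remains to compare $\sigma_{k,c}$ with $\sigma_k$. The inequality $\sigma_{k,c}\ge\sigma_k$ on $\K$ holds because the minimization in \eqref{eq:72} is over the smaller set $\brmU^\J[c]\subset(\V_4)^\J$, and truncating each $\ww=(w_1,\dots,w_\J)$ to its first $k$ components gives an element of $\Ort^k(\V_4)$ admissible in \eqref{eq:71}; thus the minimum over the larger class in \eqref{eq:71} is no bigger. For the reverse inequality on $\Kc c$: if $V\in\Kc c$, then any $\uu\in\uU^\J(V)$ lies in $\brmU^\J[c]$ by definition of the latter, and by \eqref{eq:71} the value $\sum_{h=1}^k\E_V(u_h)$ realized by such $\uu$ equals $\sigma_k(V)$; since $\uu$ is admissible in \eqref{eq:72}, we get $\sigma_{k,c}(V)\le\sigma_k(V)$. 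Combined with the global inequality this yields $\sigma_{k,c}=\sigma_k$ on $\Kc c$.

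The main obstacle, and the place requiring genuine care rather than bookkeeping, is the \emph{equi-Lipschitz} claim underlying the global regularization: one must verify that the affine functions indexed by $\ww\in\brmU^\J[c]$ have uniformly bounded slopes in $\Hilb$, which is precisely where the $L^4$-compactness of eigenfunctions from Assumption~\eqref{ass:main} (packaged in Corollary~\ref{cor:compactnessU} and the bound $A(c)<\infty$) is indispensable — without it the term $\int_\sfD V w_h^2\,\d\mm$ need not even be finite for $V\in L^2$, let alone Lipschitz in $V$. Everything else (concavity as an infimum of affine maps, the two-sided comparison of minimization domains) is a routine consequence of the representation formula \eqref{eq:71} and the definition of $\brmU^\J[c]$.
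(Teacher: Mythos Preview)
Your proposal is correct and follows essentially the same route as the paper's proof: weak continuity from Lemma~\ref{le:conteigpot}, concavity of $\sigma_k$ as an infimum of affine functions via \eqref{eq:71}, the global Lipschitz bound on $\sigma_{k,c}$ from the uniform $L^4$-bound $A(c)$ of Corollary~\ref{cor:compactnessU}, and the two-sided comparison $\sigma_{k,c}\ge\sigma_k$ on $\K$ with equality on $\Kc c$ by admissibility of $\uU^\J(V)$ in \eqref{eq:72}. One small expositional point: you invoke the Lipschitzianity of $\sigma_k$ on $\Kc c$ before establishing $\sigma_{k,c}=\sigma_k$ there, so it would read more cleanly to swap the order of those two paragraphs; also, the finiteness of $\sigma_{k,c}(V)$ (i.e.\ the infimum not being $-\infty$) uses $\E(w_h)\ge0$ together with the slope bound, which you might make explicit.
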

\begin{proof}
  The weak continuity is a consequence of Lemma \ref{le:conteigpot};
  the regularity of $\lambda_k$ clearly follows from the analogous
  property of $\sigma_k$ since $\lambda_k=\sigma_k-\sigma_{k-1}$.
  We can thus focus on the case of $\sigma_k$.

  The fact that $\sigma_k$ is concave clearly follows from
  \eqref{eq:71},
  which represents $\sigma_k$ as a minimum of a family of bounded
  linear functionals on $\Hilb$.
  It is also clear that
  $\sigma_k\le  \sigma_{k,c}$.
  
  In order to prove that the local representation given by \eqref{eq:72}
  coincides with $\sigma_k$ if $V\in \Kc c$,
  it is sufficient to notice that
  the choice $\ww\in \uU^k(V)$ is admissible in the minimization
  \eqref{eq:72} of $ \sigma_{k,c}(V)$ (by the very definition
  \eqref{eq:77})
  so that 
\begin{equation}
  \sigma_k(V)= \sigma_{k,c}(V)
  \quad\text{for every }V\in \Kc c .
  \label{eq:22bis}
\end{equation}
We now observe that for every $\ww\in \brmU^k[c]$
the norm of the linear functionals
\begin{equation}
  \label{eq:23}
  \ell_{w_h}:V\to 
  \int_\sfD V\,w_h^2\,\d\mm,\quad
  h=1,\cdots,k,
\end{equation}
is uniformly bounded in $L^2(\sfD,\mm)$ by the constant
$A^2(c)$ given by \eqref{eq:56bis}, since 
\begin{displaymath}
\big  \|w_h^2\big\|_{L^2(\sfD,\mm)}
  \le \big\|w_h\big\|^2_{L^4(\sfD,\mm)}\le A^2(c),
\end{displaymath}
so that $\sigma_{k,c}$ satisfies
\begin{displaymath}
  \sigma_{k,c}(V)\ge \min\Big\{\sum_{h=1}^k\ell_{w_h}(V) : \ww\in \brmU^k[c]\Big\}\ge -kA^2(c)|V|
\end{displaymath}
and it is finite everywhere.
Moreover, $ \sigma_{k,c}$ is
the infimum of a family of $kA^2(c)$-Lipschitz functions on $\Hilb$
so it is $kA^2(c)$-Lipschitz as well. 
Thanks to \eqref{eq:22bis} we deduce that $\sigma_k$ is
$kA^2(c)$-Lipschitz in $\Kc c $.
\end{proof}
\subsection{Compactness properties of the limiting subdifferential of
  $\HH$.}

Let us now compute the superdifferential
of the concave functions
$\sigma_{k,c}$ defined by \eqref{eq:72};
we recall that
the Fr\'echet superdifferential $\partial^+_F\tFF$ of
a function $\tFF:\Hilb\to \R\cup\{-\infty\}$
is defined as $-\partial^-_F(-\tFF)$.
We will just write $\partial^+\tFF$ if $\tFF$ is concave.

For every $V\in \Hilb$ and $c\ge c_o$ we set
\begin{equation}
  \label{eq:85}
  \begin{aligned}
    \uU^{k,c}(V):={}&\Big\{\uu\in \brmU^k[c]: \uu \text{ is a minimizer of
    }\eqref{eq:72}\Big\},
    \\
    \Sigma_{k,c}(V):={}&\Big\{\sum_{h=1}^ku_h^2: \uu=(u_1,\cdots,u_k)\in
    \uU^{k,c}(V)\Big\},\\
    \Sigma_{k}(V):={}&\Big\{\sum_{h=1}^ku_h^2: \uu=(u_1,\cdots,u_k)\in
    \uU^{k}(V)\Big\}\quad\text{if }V\in \K.
  \end{aligned}
\end{equation}
Notice that
\begin{equation}
  \label{eq:93}
  \uU^{k,c}(V)=\uU^k(V)\quad\text{and}\quad
  \Sigma_{k,c}(V)=\Sigma_{k}(V)
  \quad\text{if $V\in \Kc c $.}
\end{equation}
\begin{lemma}
  \label{le:1}
  For every $V\in \Hilb$ and $c\ge c_o$ we have
  \begin{equation}
    \label{eq:86}
    \partial^+\sigma_{k,c}(V)=\cconv\Big(\Sigma_{k,c}(V)\Big);
  \end{equation}
  in particular if $V\in \Kc c $ 
  \begin{equation}
    \label{eq:89}
    \partial^+\sigma_{k,c}(V)=\cconv\Big(\Sigma_{k}(V)\Big).
  \end{equation}
 For $V\in \K$ we also get
  \begin{equation}
    \label{eq:134}
    \xi \in \cconv\Big(\Sigma_{k}(V)\Big)
    \quad\Rightarrow\quad
    \sigma_k(W)-\sigma_k(V)\le \langle \xi,W-V\rangle
    \quad\text{for every }W\in \K.
  \end{equation}
  Finally, $\partial^+ \sigma_{k,c}$ takes compact values
  and it is upper semicontinuous w.r.t.~the weak topology.
  $ \sigma_{k,c}$ is also Fr\'echet differentiable at every
  $V\in \Kc c $ such that $\lambda_k(V)<\lambda_{k+1}(V)$.
\end{lemma}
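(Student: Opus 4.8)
The plan is to exploit the fact that, by \eqref{eq:72}, $\sigma_{k,c}$ is a pointwise infimum over the fixed compact set $\brmU^\J[c]\subset (\V_4)^\J$ of the affine (indeed linear, up to the constant $\sum_h\E(w_h)$) functionals $V\mapsto \sum_{h=1}^k\E(w_h)+\langle V,\sum_h w_h^2\rangle$, each of which is $kA^2(c)$-Lipschitz by the bound $\|w_h^2\|_{L^2}\le A^2(c)$ from Lemma \ref{le:Lip}. For such an infimum of a family of affine functionals parametrized by a compact set with continuous dependence, the superdifferential at $V$ is the closed convex hull of the gradients of the \emph{active} functionals, i.e.\ those $\ww\in \brmU^\J[c]$ attaining the minimum; since the minimizers are exactly the elements of $\uU^{k,c}(V)$ and the corresponding gradient is $\sum_{h=1}^k u_h^2$, this gives \eqref{eq:86}. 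This is a standard fact of convex analysis (Danskin-type formula for suprema of affine functions, or equivalently the formula for the subdifferential of a supremum of convex functions over a compact index set with a continuity assumption); I would either cite the convex-analysis result announced for the Appendix or give the short direct argument below.

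For the direct argument: the inclusion $\supseteq$ is immediate, since for $\uu\in \uU^{k,c}(V)$ and any $W$,
\[
\sigma_{k,c}(W)\le \sum_{h=1}^k\E(u_h)+\langle W,\textstyle\sum_h u_h^2\rangle
= \sigma_{k,c}(V)+\langle W-V,\textstyle\sum_h u_h^2\rangle,
\]
so $\sum_h u_h^2\in\partial^+\sigma_{k,c}(V)$, and $\partial^+$ is closed and convex. For $\subseteq$, suppose $\xi\notin \cconv(\Sigma_{k,c}(V))$; since $\Sigma_{k,c}(V)$ is compact in $L^2$ (image under the continuous map $\ww\mapsto\sum_h w_h^2$ of the set $\uU^{k,c}(V)$, which is compact by Corollary \ref{cor:compactnessU} and closedness of the minimizer set under the strong $(\V_4)^\J$-convergence together with continuity of the functional in \eqref{eq:72}), its closed convex hull is closed and we may strictly separate: there is $Z\in\Hilb$, $|Z|=1$, with $\langle Z,\xi\rangle > \langle Z,\eta\rangle +\delta$ for all $\eta\in\Sigma_{k,c}(V)$ and some $\delta>0$. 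Moving along $V+tZ$ and using that any minimizer $\ww_t$ for $V+tZ$ subconverges as $t\downarrow 0$ to an element of $\uU^{k,c}(V)$ (again Corollary \ref{cor:compactnessU}), one gets $\sigma_{k,c}(V+tZ)=\sigma_{k,c}(V)+t\langle Z,\eta_0\rangle+o(t)$ for some $\eta_0\in\Sigma_{k,c}(V)$, whence $\sigma_{k,c}(V+tZ)-\sigma_{k,c}(V)-\langle\xi,tZ\rangle\le -t\delta+o(t)<0$ for small $t$, so $\xi\notin\partial^+\sigma_{k,c}(V)$. Then \eqref{eq:89} follows from \eqref{eq:93}, and \eqref{eq:134} follows from \eqref{eq:89} combined with $\sigma_{k,c}\ge\sigma_k$ on $\K$ and $\sigma_{k,c}=\sigma_k$ on $\Kc c$ (take $c$ large enough that $V,W\in\Kc c$, using that $\{\Kc c\}$ exhausts $\K$, and that a supporting hyperplane at $V$ for $\sigma_{k,c}$ lies above $\sigma_{k,c}\ge\sigma_k$).

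The compactness of the values of $\partial^+\sigma_{k,c}$ is exactly the compactness of $\cconv(\Sigma_{k,c}(V))$ noted above (closed convex hull of a compact subset of the Hilbert space $\Hilb$ is compact). Weak upper semicontinuity: if $V_n\weakto V$ and $\xi_n\in\partial^+\sigma_{k,c}(V_n)$ with $\xi_n\weakto\xi$, write $\xi_n$ as a limit of convex combinations $\sum_j\alpha_{n,j}\sum_h (u^{n,j}_h)^2$ with $\uu^{n,j}\in\uU^{k,c}(V_n)$; each such $\uu^{n,j}$ lies in the fixed compact set $\brmU^\J[c]$ and, by passing through the continuity of the functional in \eqref{eq:72} in $V$ (Lemma \ref{le:Lip}), limit points are minimizers for $V$, so the $L^2$-strong limits of the $\sum_h(u^{n,j}_h)^2$ lie in $\Sigma_{k,c}(V)$; a Carath\'eodory/diagonal argument (finitely many — at most $\dim\Hilb+1$, and here one can reduce to a bounded number by Carath\'eodory in the compact convex hull) then shows $\xi\in\cconv(\Sigma_{k,c}(V))=\partial^+\sigma_{k,c}(V)$. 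Finally, Fr\'echet differentiability at $V\in\Kc c$ with $\lambda_k(V)<\lambda_{k+1}(V)$: in this simple-gap case the sum of the first $k$ eigenspaces $\sum_{h\le k}u_h^2$ is independent of the choice of orthonormal frame $\uu\in\uU^k(V)$ (the orthogonal projector onto the first $k$ eigenfunctions is uniquely determined), so $\Sigma_k(V)$ is a singleton, hence $\partial^+\sigma_{k,c}(V)=\cconv(\Sigma_k(V))$ is a singleton, which for a concave function is equivalent to Fr\'echet differentiability.

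\textbf{Main obstacle.} The technical heart is the separation/first-variation step establishing the inclusion $\partial^+\sigma_{k,c}(V)\subseteq\cconv(\Sigma_{k,c}(V))$: one must show that along any direction $Z$ the one-sided derivative of $\sigma_{k,c}$ is controlled by $\max_{\eta\in\Sigma_{k,c}(V)}\langle Z,\eta\rangle$, which requires the upper semicontinuity of the minimizer map $V\mapsto\uU^{k,c}(V)$ — precisely the content packaged by Corollary \ref{cor:compactnessU} and the weak continuity in Lemma \ref{le:Lip}. Getting the selection of a limiting minimizer $\eta_0$ right (rather than merely a limsup bound) is where care is needed; everything else is routine convex analysis in a Hilbert space.
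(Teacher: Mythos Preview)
Your approach is correct and essentially identical to the paper's: the paper simply invokes its Appendix Lemma~\ref{le:a1} (exactly the Danskin-type result you sketch) with the identification $C=\brmU^\J[c]$, $f(\ww)=\sum_{h\le k}w_h^2$, $g(\ww)=\sum_{h\le k}\E(w_h)$, and then cites Corollary~\ref{cor:a1} to see that $\Sigma_k(V)$ is a singleton when $\lambda_k(V)<\lambda_{k+1}(V)$. One small fix worth noting: your Carath\'eodory step in the weak upper semicontinuity argument does not work as written in infinite dimensions (there is no finite bound on the number of extreme points needed); the paper's appendix handles this by representing each $\xi_n$ as the barycenter $\int_C f\,\d\mu_n$ of a probability measure $\mu_n$ supported on $M(V_n)$ and passing to a weak limit in $\mathscr P(C)$, or equivalently one may simply observe that all $\xi_n$ lie in the fixed compact set $\cconv\big(f(\brmU^\J[c])\big)$, so $\xi_n\to\xi$ strongly and the concavity inequality $\sigma_{k,c}(W)\le\sigma_{k,c}(V_n)+\langle\xi_n,W-V_n\rangle$ passes directly to the limit. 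Also, for \eqref{eq:134} you only need $V\in\Kc c$ (not $W$), since $\sigma_k(W)\le\sigma_{k,c}(W)$ holds for every $W\in\K$.
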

\begin{proof}
  We want to apply
  Lemma \ref{le:a1} in the appendix and we observe that 
  the functions $ \sigma_{k,c}$
  can be represented as in \eqref{eq:87},
  where
  \begin{equation}
    \label{eq:92}
    C:=\brmU^\J[c]\subset (\V_4)^\J,\quad
    f(\ww):=\sum_{h=1}^k w_h^2,\quad
    g(\ww):=\sum_{h=1}^k \E(w_h)\quad
   \text{for every }\ww\in \brmU^\J[c].
  \end{equation}
  We thus obtain all the properties stated for $\sigma_{k,c}$;
  notice that \eqref{eq:89} just follows by \eqref{eq:86} and \eqref{eq:93}.
  It is also worth noticing that
\begin{equation}
\text{if $V\in \Kc c $ and $\lambda_k(V)<\lambda_{k+1}(V)$ then
  $\Sigma_{k,c}(V)=\Sigma_k(V)$ is a singleton}\label{eq:94}
\end{equation}
thanks to
Corollary \ref{cor:a1}. This implies that $\sigma_{k,c}$ is
Fr\'echet differentiable at $V$ by Lemma~\ref{le:a1}.

Eventually, \eqref{eq:134} follows from \eqref{eq:89} by choosing $c$
sufficiently large so that $V\in \Kc c$ and using the fact that
$\sigma_k(V)=\sigma_{k,c}(V)$, $\sigma_k(W)\le \sigma_{k,c}(W)$.
\end{proof}
We now want to study the structure of the subdifferential of $\HH$.
We fix a constant $c\ge c_o$ and we denote by
$\varphi_c:\R^\J\to \R$ a $\rmC^1$ and Lipschitz function
whose restriction to $\Lambda^\J\cap [\lambda_{\rm
  min},1+\ell_\J(c)]^\J$ coincides with $\varphi$
(recall \eqref{eq:84}).
We introduce the function $\psi_c\in \rmC^1(\R^\J)$
\begin{equation}
  \label{eq:78}
  \psi_c(s_1,s_2,\cdots,s_\J):=\varphi_c(s_1,s_2-s_1,s_3-s_2,\cdots,s_\J-s_{\J-1})
\end{equation}
which clearly satisfies
\begin{equation}
  \label{eq:96}
  \partial_j\psi_c=\partial_j\varphi_c-\partial_{j+1}\varphi_c\quad  \text{if
  }1\le j<\J,\quad
  \partial_\J\psi_c=\partial_\J\varphi_c,
\end{equation}
and we set
\begin{equation}
  \label{eq:95}
   \HH_c(V):=\psi_c(\ssigma_c(V)),\quad
  \ssigma_c(V):=( \sigma_{1,c}(V),
  \sigma_{2,c}(V),\cdots,
   \sigma_{\J,c}(V))\quad\text{for every }V\in \Hilb.
\end{equation}
It turns out that $\HH_c$ is a weakly continuous and strongly Lipschitz function which
coincides with $\HH$ on $\Kc c$.
Calling the map \[
{\mathbf \Lambda}^\J\colon \Lambda^\J\to \R^\J,\qquad {\mathbf \Lambda}^\J(\lambda_1,\dots,\lambda_\J)=(\lambda_1,\lambda_1+\lambda_2,\dots, \lambda_1+\dots+\lambda_\J),
\]
we define $\psi$ as the restriction of $\psi_c$ to ${\mathbf \Lambda}^\J\Big(\Lambda^\J\cap [\lambda_{\rm
  min},1+\ell_\J(c)]^\J\Big)$. In particular, \[
  \psi(s_1,s_2,\cdots,s_\J)=\varphi(s_1,s_2-s_1,\cdots,s_\J-s_{\J-1}), \qquad \text{for }(s_1,s_2,\cdots,s_\J)\in {\mathbf \Lambda}^\J\Big(\Lambda^\J\cap [\lambda_{\rm
  min},1+\ell_\J(c)]^\J\Big).
\]
Let us now introduce the multivalued map
$\subH_c:\Hilb\to 2^{\Hilb}$,
\begin{equation}
  \label{eq:93b}
  \subH_c(V):=\sum_{j=1}^\J
  \partial_j\psi_c(\ssigma_c(V))\partial^+\sigma_{j,c}(V)=
  \Big\{\sum_{j=1}^\J
\gamma_j \xi_j:
    \gamma_j=\partial_j\psi_c(\ssigma_c(V)),\
    \xi_j\in \partial^+\sigma_{j,c}(V)\Big\}.
  \end{equation}
  When $V\in \Kc c$, $\subH_c(V)$ is independent of $c$ and can be
  written as
  \begin{equation}
    \label{eq:108}
    \subH_c(V)=\subH(V)=
    \Big\{\sum_{j=1}^\J
    \gamma_j \xi_j:
    \gamma_j=\partial_j\psi(\ssigma(V)),\
    \xi_j\in \cconv\Big(\Sigma_j(V)\Big)\Big\}.
  \end{equation}

  \begin{proposition}[Compactness of the limiting subdifferential of $\HH_c$]
    \label{prop:compactness}
  Let $c\ge c_o$ be given.
  \begin{enumerate}
  \item For every weakly compact set $B\subset \Hilb$
    (in particular for $B=\Kc c$) the set
    \begin{equation}
      \label{eq:101}
      \bigcup_{V\in B}\subH_c(V)\quad\text{is strongly compact in }\Hilb
    \end{equation}
    and the graph of $\subH_c$ is weakly closed in $\Hilb\times \Hilb$:
    \begin{equation}
      \label{eq:97}
      (V_n,\xi_n)\weakto (V,\xi)\quad \xi_n\in
      \subH_c(V_n)\quad\Rightarrow
      \quad
      \xi\in \subH_c(V).
    \end{equation}
  \item
    For every $V\in \Hilb$, 
    $\partial_L^- \HH_c(V)$ is not empty and
    $\partial_L^- \HH_c(V)\subset \subH_c(V)$.
  \end{enumerate}
\end{proposition}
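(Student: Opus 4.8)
The proof will be organized so that everything follows from two ingredients: the strong compactness in part~(1), and the \emph{pointwise} inclusion $\partial_F^-\HH_c(W)\subset\subH_c(W)$, valid at \emph{every} $W\in\Hilb$. Granting these, part~(2) becomes soft: $\partial_L^-\HH_c(V)$ is nonempty because $\HH_c$ is globally Lipschitz on $\Hilb$ and Fréchet subdifferentiable on a dense set (see the last paragraph), and for the inclusion $\partial_L^-\HH_c(V)\subset\subH_c(V)$ one picks $v_n\to V$ strongly with $\xi_n\weakto\xi$ and $\xi_n\in\partial_F^-\HH_c(v_n)\subset\subH_c(v_n)$; since $\{v_n\}$ is bounded, part~(1) upgrades $\xi_n\to\xi$ to strong convergence along a subsequence and then forces $\xi\in\subH_c(V)$ by the weak closedness of the graph~\eqref{eq:97}.

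\emph{Part (1).} Fix a weakly compact $B\subset\Hilb$, take $V_n\in B$ and $\xi_n\in\subH_c(V_n)$, and write $\xi_n=\sum_{j=1}^\J\gamma_{j,n}\,\xi_{j,n}$ with $\gamma_{j,n}=\partial_j\psi_c(\ssigma_c(V_n))$ and $\xi_{j,n}\in\partial^+\sigma_{j,c}(V_n)=\cconv\big(\Sigma_{j,c}(V_n)\big)$ as in Lemma~\ref{le:1}. The crucial observation is that all the $\xi_{j,n}$ stay in one fixed \emph{strongly} compact subset of $\Hilb$: by Corollary~\ref{cor:compactnessU} the frames in $\brmU^\J[c]$ form a compact subset of $(L^4(\sfD,\mm))^\J$, the map $\ww\mapsto\sum_{h\le j}w_h^2$ is continuous from $(L^4)^\J$ into $\Hilb=L^2(\sfD,\mm)$ by Hölder's inequality, hence each $\Sigma_{j,c}(V_n)$ lies in a fixed compact set $S_j\subset\Hilb$, and $\cconv(S_j)$ is again strongly compact (closed convex hull of a compact set). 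Extracting subsequences, $V_n\weakto V\in B$ and $\xi_{j,n}\to\xi_j$ strongly; since $\ssigma_c$ is weakly continuous on $\Hilb$ (as in the proof of Lemma~\ref{le:Lip}, exploiting the $L^4$-compactness of $\brmU^\J[c]$) and $\psi_c\in\rmC^1$, we get $\gamma_{j,n}\to\gamma_j:=\partial_j\psi_c(\ssigma_c(V))$, so $\xi_n\to\xi:=\sum_j\gamma_j\xi_j$ strongly in $\Hilb$. Finally $\xi_j\in\partial^+\sigma_{j,c}(V)$ by the weak upper semicontinuity of $\partial^+\sigma_{j,c}$ (Lemma~\ref{le:1}), so $\xi\in\subH_c(V)$. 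This gives simultaneously the strong compactness~\eqref{eq:101} ($\bigcup_{V\in B}\subH_c(V)$ is sequentially compact and, by the last sentence, closed, since $B$ is weakly closed) and the weak closedness of the graph~\eqref{eq:97} (if $(V_n,\xi_n)\weakto(V,\xi)$ with $\xi_n\in\subH_c(V_n)$, then $\{V_n\}$ is bounded, hence contained in a ball $B$, so a subsequence of $\xi_n$ converges strongly, necessarily to $\xi$, and $\xi\in\subH_c(V)$).

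\emph{The pointwise inclusion.} Fix $W\in\Hilb$, set $s_0=\ssigma_c(W)$ and $\gamma_j=\partial_j\psi_c(s_0)$. Since $\psi_c\in\rmC^1$ and every $\sigma_{j,c}$ is globally Lipschitz on $\Hilb$ (Lemma~\ref{le:Lip}), the first-order expansion of $\psi_c$ at $s_0$ yields
\[
  \HH_c(Z)=\HH_c(W)+\sum_{j=1}^\J\gamma_j\big(\sigma_{j,c}(Z)-\sigma_{j,c}(W)\big)+o(|Z-W|)\qquad\text{as }Z\to W,
\]
so, by the $\liminf$-characterization of the Fréchet subdifferential, $\partial_F^-\HH_c(W)\subset\partial_F^-\Psi(W)$ with $\Psi:=\sum_j\gamma_j\sigma_{j,c}$. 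I would then split $\Psi=\Psi_P+\Psi_Q$, where $\Psi_P:=\sum_{\gamma_j\ge0}\gamma_j\sigma_{j,c}$ is concave and $\Psi_Q:=\sum_{\gamma_j<0}\gamma_j\sigma_{j,c}$ is convex, so that $-\Psi_P$ is convex and continuous and $\Psi=\Psi_Q-(-\Psi_P)$ is a difference of convex functions. Fixing any $\eta\in\partial^+\Psi_P(W)$ (nonempty, since $-\Psi_P$ is convex and continuous, hence subdifferentiable at $W$), the subgradient inequality for $-\Psi_P$ bounds $\Psi_P(Z)-\Psi_P(W)\le\langle\eta,Z-W\rangle$ and thus turns the defining inequality of $\xi\in\partial_F^-\Psi(W)$ into $\xi-\eta\in\partial_F^-\Psi_Q(W)=\partial^-\Psi_Q(W)$; hence $\xi\in\partial^+\Psi_P(W)+\partial^-\Psi_Q(W)$. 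By the Moreau–Rockafellar sum rule applied to the continuous convex functions $-\sigma_{j,c}$ one has $\partial^+\Psi_P(W)=\sum_{\gamma_j\ge0}\gamma_j\,\partial^+\sigma_{j,c}(W)$ and $\partial^-\Psi_Q(W)=\sum_{\gamma_j<0}\gamma_j\,\partial^+\sigma_{j,c}(W)$, so that $\partial_F^-\HH_c(W)\subset\sum_{j=1}^\J\gamma_j\,\partial^+\sigma_{j,c}(W)=\subH_c(W)$, by the definition~\eqref{eq:93b}.

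\emph{Non-emptiness and the main obstacle.} For the non-emptiness of $\partial_L^-\HH_c(V)$: since $\Hilb$ is a separable Hilbert space (in particular Asplund), the continuous convex functions $-\sigma_{j,c}$ are Fréchet differentiable on dense $G_\delta$ subsets, hence $\HH_c=\psi_c\circ\ssigma_c$ is Fréchet differentiable at all points of a dense set, where $\partial_F^-\HH_c(w)=\{\nabla\HH_c(w)\}\ne\emptyset$; choosing $v_n\to V$ with $\xi_n\in\partial_F^-\HH_c(v_n)$ and using $|\xi_n|\le\mathrm{Lip}(\HH_c)$ together with the continuity of $\HH_c$, one extracts a weakly convergent subsequence $\xi_n\weakto\xi\in\partial_L^-\HH_c(V)$. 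Combining this with the pointwise inclusion and part~(1) then gives $\partial_L^-\HH_c(V)\subset\subH_c(V)$, as explained in the first paragraph. The main obstacle is the pointwise inclusion itself: the composition $\psi_c\circ\ssigma_c$ is in general neither convex nor concave — the signs of the $\gamma_j=\partial_j\psi_c(\ssigma_c(W))$ are uncontrolled — so one must genuinely use the $\rmC^1$ regularity of $\psi_c$ to linearize and then exploit the difference-of-convex structure of $\sum_j\gamma_j\sigma_{j,c}$ produced by the concavity of each $\sigma_{j,c}$. A secondary but indispensable point is that the entire strong-compactness mechanism of part~(1) rests on the $L^4$-summability/compactness of eigenfunctions (assumption~2.D, via Corollary~\ref{cor:compactnessU}), which is exactly what keeps the squares $w_h^2$ inside a strongly compact subset of $L^2(\sfD,\mm)$.
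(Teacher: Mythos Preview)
Your proof is correct, and for part~(1) it is essentially an explicit unpacking of the paper's argument (Lemma~\ref{le:1} plus the $L^4$-compactness of $\brmU^\J[c]$ and the $\rmC^1$ regularity of $\psi_c$). The one point you assert without proof is the weak continuity of $\ssigma_c$ on all of $\Hilb$; this does hold, but note that Lemma~\ref{le:Lip} only states weak continuity of $\sigma_k$ on $\K$, so strictly speaking you need the extra observation that if $V_n\weakto V$ and $\ww^n\in\uU^{j,c}(V_n)$ converges in $(\V_4)^\J$ to some $\ww$, then $\sigma_{j,c}(V_n)\to\langle V,\sum_h w_h^2\rangle+\sum_h\E(w_h)\ge\sigma_{j,c}(V)$, while concavity gives the reverse inequality.

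For part~(2) your route is genuinely different from the paper's. The paper invokes ready-made nonsmooth calculus from \cite{Clarke}: the limiting chain rule for the $\rmC^1$ outer function $\psi_c$ composed with the Lipschitz map $\ssigma_c$, together with the sum rule and the fact that $\partial_L^- f\subset\partial^+ f$ for concave $f$, which immediately yields $\partial_L^-\HH_c(V)\subset\subH_c(V)$ and nonemptiness. You instead prove the sharper \emph{pointwise Fr\'echet} inclusion $\partial_F^-\HH_c(W)\subset\subH_c(W)$ by linearizing $\psi_c$ and then handling the resulting linear combination $\sum_j\gamma_j\sigma_{j,c}$ via a DC (difference-of-convex) splitting according to the signs of the $\gamma_j$, using only the Moreau--Rockafellar sum rule for continuous convex functions; the passage to the limiting subdifferential then comes for free from the strong compactness in part~(1). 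Your argument is longer but more self-contained and avoids the black-box calculus for $\partial_L^-$; the paper's is shorter but relies on the Clarke machinery. Both approaches ultimately hinge on the same structural facts: concavity of each $\sigma_{j,c}$, $\rmC^1$ regularity of $\psi_c$, and the $L^4$-compactness from assumption~2.D.
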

\begin{proof}
  Claim (1) is an easy consequence of Lemma \ref{le:1}, the
  representation \eqref{eq:92} in terms of Lemma \ref{le:a1}, 
  and the fact that $\psi_c$ is of class
  $\rmC^1$.

  Claim (2) follows by the application of the calculus properties of limiting
  subdifferentials
  of Lipschitz functions: the chain rule
  \cite[Ch.~1, Thm.~10.4]{Clarke}, the sum
  rule
  \cite[Ch.~1, Prop.~10.1]{Clarke}, and the fact that $\partial_L^- f\subset
  \partial^+f \cup\partial^- f$ for convex or concave functions.
\end{proof}
\subsection{Superdifferentiability}
We want now to show that if $\varphi$ satisfies the further structural
conditions
stated in Section 2.F, then $\HH$ has a nice superdifferentiability
property in $\Kc c$.

\begin{theorem}[Superdifferentiability of $\HH$]
  \label{thm:superdiff}
Let $c\ge c_o$ and let $\HH$ satisfy the structural assumptions
\eqref{eq:29} of Section 2.F.
If $V\in \Kc c$, $\uu=(u_1,\cdots,u_\J)\in \uU^\J(V)$
and $\xi=\sum_{i=1}^\J\partial_i\varphi(\llambda^\J(V))u_i^2$, then 
$\xi\in \partial^+_F \HH_c(V)$; in particular
there exists a positive function $\omega:\Hilb \to \R$
as in \eqref{eq:barrier} and $\varrho>0$ such that 
    \begin{equation}
      \HH(W)-\HH(V)-\langle \xi,W-V\rangle \le\omega(W-V)
      \quad\text{for every }W\in \rmB(V,\varrho)\cap \Kc c.
      \label{eq:44bis}
    \end{equation}
\end{theorem}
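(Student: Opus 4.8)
The plan is to reduce the superdifferentiability of $\HH_c$ to a first-order expansion of the eigenvalue-sum functionals $\sigma_{k,c}$ together with the structural inequalities \eqref{eq:29}. First I would recall that by \eqref{eq:95} we have $\HH_c=\psi_c\circ\ssigma_c$ with $\psi_c\in\rmC^1$, so for $W$ near $V$ we may write, using the chain rule and Taylor expanding $\psi_c$,
\[
\HH_c(W)-\HH_c(V)=\sum_{j=1}^\J \partial_j\psi_c(\ssigma_c(V))\,\big(\sigma_{j,c}(W)-\sigma_{j,c}(V)\big)+o\big(|\ssigma_c(W)-\ssigma_c(V)|\big),
\]
and since each $\sigma_{j,c}$ is globally Lipschitz (Lemma \ref{le:Lip}) the remainder is a $\rmC^1$ convex modulus $\omega_0(W-V)$ as required in \eqref{eq:barrier}. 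Now by concavity of $\sigma_{j,c}$ and Lemma \ref{le:1}, picking any fixed $\eta_j\in\partial^+\sigma_{j,c}(V)$ gives $\sigma_{j,c}(W)-\sigma_{j,c}(V)\le\langle\eta_j,W-V\rangle$. The natural choice is $\eta_j:=\sum_{h=1}^j u_h^2$, which lies in $\Sigma_j(V)\subset\partial^+\sigma_{j,c}(V)$ because $\uu\in\uU^\J(V)$ and $V\in\Kc c$ (so $\uU^{j,c}(V)=\uU^j(V)$ by \eqref{eq:93}). Substituting,
\[
\HH_c(W)-\HH_c(V)\le\sum_{j=1}^\J\partial_j\psi_c(\ssigma_c(V))\Big\langle\sum_{h=1}^j u_h^2,\,W-V\Big\rangle+\omega_0(W-V).
\]

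The second step is the algebraic identity that turns the right-hand side into $\langle\xi,W-V\rangle$. Summing by parts (Abel summation) in the index $j$ and using \eqref{eq:96}, namely $\partial_j\psi_c=\partial_j\varphi_c-\partial_{j+1}\varphi_c$ for $j<\J$ and $\partial_\J\psi_c=\partial_\J\varphi_c$, one gets
\[
\sum_{j=1}^\J\partial_j\psi_c(\ssigma_c(V))\sum_{h=1}^j u_h^2=\sum_{i=1}^\J\partial_i\varphi_c(\llambda^\J(V))\,u_i^2=\xi,
\]
since on $\Kc c$ we have $\varphi_c=\varphi$ and $\partial_i\varphi_c(\llambda^\J(V))=\partial_i\varphi(\llambda^\J(V))$. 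Thus $\HH_c(W)-\HH_c(V)-\langle\xi,W-V\rangle\le\omega_0(W-V)$ for all $W$ in a neighborhood of $V$, which is exactly the Fréchet superdifferential condition, i.e. $\xi\in\partial^+_F\HH_c(V)$; restricting to $W\in\rmB(V,\varrho)\cap\Kc c$, where $\HH_c=\HH$, yields \eqref{eq:44bis}.

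The point where condition \eqref{eq:29} is genuinely needed — and which I expect to be the main obstacle — is the legitimacy of the choice $\eta_j=\sum_{h\le j}u_h^2$ when eigenvalues are multiple. When $\lambda_{j}(V)=\lambda_{j+1}(V)$ the frame $\uu$ is only one of many in $\uU^\J(V)$, and while $\sum_{h\le j}u_h^2\in\Sigma_j(V)$ always holds, the Abel-summation rearrangement implicitly requires the coefficients $\partial_j\psi_c$ to have the right sign where the telescoping "breaks", i.e. precisely when $\lambda_{j}=\lambda_{j+1}$ one needs $\partial_j\psi_c(\ssigma_c(V))=\partial_j\varphi(\llambda^\J(V))-\partial_{j+1}\varphi(\llambda^\J(V))\ge0$ (the second line of \eqref{eq:29}) and, at the top, $\partial_\J\psi_c=\partial_\J\varphi\ge0$ (the first line of \eqref{eq:29}). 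Without these signs, the subgradient inequality for $\sigma_{j,c}$ could point the wrong way after the partial summation, because a multiple eigenvalue makes the individual quantity $u_i^2$ fail to be a subgradient of $\lambda_i$ — only the \emph{partial sums} $\sigma_j$ are concave. So the careful part of the argument is: perform the Abel summation grouping consecutive equal eigenvalues into blocks, apply the subgradient inequality for $\sigma_{j,c}$ only at block boundaries (where it is an honest concave functional), and invoke \eqref{eq:29} to control the within-block terms. Once that bookkeeping is done, combining with the $\rmC^1$ Taylor remainder of $\psi_c$ and the global Lipschitz bound on $\ssigma_c$ closes the estimate.
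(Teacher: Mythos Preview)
Your approach is the same as the paper's: write $\HH_c=\psi_c\circ\ssigma_c$, Taylor-expand $\psi_c$, use supergradients of the concave $\sigma_{j,c}$, and Abel-sum to recover $\xi$. One point needs sharpening, however. The ``Substituting'' step at the end of your first paragraph is not yet justified: replacing $\sigma_{j,c}(W)-\sigma_{j,c}(V)$ by the upper bound $\langle\eta_j,W-V\rangle$ inside the sum $\sum_j\gamma_j(\cdots)$, with $\gamma_j=\partial_j\psi_c(\ssigma_c(V))$, preserves the inequality only when $\gamma_j\ge 0$, which is not assumed for all $j$.

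Your third paragraph recognises this, but the resolution you sketch (grouping into blocks and applying the supergradient inequality only at block boundaries) is the dichotomy stated backwards, and the ``block'' bookkeeping is unnecessary. The paper's cleaner mechanism is: set $H:=\{j<\J:\lambda_j(V)<\lambda_{j+1}(V)\}$. For $j\in H$, Lemma~\ref{le:1} gives that $\sigma_{j,c}$ is actually Fr\'echet \emph{differentiable} at $V$ with gradient $\xi_j=\sum_{h\le j}u_h^2$, so
\[
\gamma_j\big(\sigma_{j,c}(W)-\sigma_{j,c}(V)\big)=\gamma_j\langle\xi_j,W-V\rangle+o(|W-V|)
\]
holds as a two-sided estimate, regardless of the sign of $\gamma_j$. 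For $j\notin H$ (i.e.\ $\lambda_j(V)=\lambda_{j+1}(V)$, or $j=\J$), condition~\eqref{eq:29} gives $\gamma_j\ge 0$ directly, so the one-sided supergradient inequality can be multiplied through. Summing and then performing your Abel summation yields the claim. This is the only place \eqref{eq:29} enters; no further block structure is needed.
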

\begin{proof}
  Let us recall that $\ssigma_c(V)=\ssigma(V)$ since $V\in \Kc c$;
  we set
  \begin{displaymath}
    \lambda_i:=\lambda_i(V),\quad
    p_i:=\partial_i\varphi(\llambda^\J(V)),\quad
    \gamma_i:=\partial_i\psi(\ssigma(V))=p_i-p_{i+1}.
\end{displaymath}
  The differentiability of $\psi_c$ and the fact that $ W\mapsto
  \sigma_{i,c}(W)$ is Lipschitz entail that
  \begin{equation}
    \label{eq:48}
    \psi_c( \ssigma_c(W))-
    \psi_c(\ssigma_c(V))=
    \sum_{i=1}^\J \gamma_i(\sigma_{i,c}(W)-\sigma_{i,c}(V))+
    o(|W-V|)\quad\text{as }W\to V.
  \end{equation}
  Let us consider the set $H$ of indices $\{j:1\le j<\J,\ 
  \lambda_j<\lambda_{j+1}\}$
  and observe that $\gamma_j\ge0$ if $j\not\in H$
  thanks to \eqref{eq:29}.

  By Lemma \ref{le:1} $\sigma_{i,c}$ is Fr\'echet differentiable
  at $V$ for every $i\in H$ and it is Fr\'echet superdifferentiable
  for every $i$. It follows that
  setting
  $\xi_i:=\sum_{k=1}^iu_k^2$, 
  \begin{equation}
    \label{eq:103}
    \gamma_i \xi_i \quad\text{belongs to the Fr\'echet
      superdifferential of}\quad
    W\mapsto \gamma_i\sigma_{i,c}(W)\quad\text{at $V$}.
  \end{equation}
  Using \eqref{eq:48} we find a positive
  function $\omega:\Hilb\to \R$ as
  in \eqref{eq:barrier} and $\varrho>0$ such that 
   \begin{equation}
    \label{eq:48bis}
    \psi_c( \ssigma_c(W))-
    \psi_c(\ssigma_c(V))\le 
    \langle \sum_{i=1}^\J \gamma_i \xi_i, W-V\rangle+
    \omega(W-V)
    \quad\text{for every }W\in \rmB(V,\varrho).
  \end{equation}
  On the other hand
  \begin{equation}
    \label{eq:141}
    \sum_{i=1}^\J \gamma_i \xi_i=p_\J\xi_\J+
    \sum_{i=1}^{\J-1} (p_i-p_{i+1}) \xi_i=
    \sum_{i=2}^{\J} p_i(\xi_i-\xi_{i-1})+p_1\xi_1
    =\sum_{j=1}^\J p_i u_i^2=\xi.
  \end{equation}
Inequality~\eqref{eq:44bis} then follows by \eqref{eq:48bis} and the fact that
  $\HH_c(V)=\HH(V)$ and $\HH_c(W)=\HH(W)$ if $V,W\in \Kc c$.
\end{proof}
\subsection{The case when $\HH$ is concave}
\label{subsec:func}
The result of the previous section can be further refined when
$\varphi$ satisfies the stronger condition,
\begin{equation}
  \label{eq:40bis}
  \partial_{i}\varphi\ge
  \partial_{i+1}\varphi\quad\text{for every }1\le i<\J,\quad
  \partial_\J \varphi\ge 0
  \quad\text{in
  }\Lambda^\J,
\end{equation}
which is related to Schur-concavity \cite{moa}.
Even though the superdifferentiability result, Theorem \ref{thm:superdiff}, covers a
more general setting, let us briefly recap this different approach.

We consider here the situation when $\varphi$ is the restriction
to
$\Lambda^\J$ of a $\rmC^1$ symmetric function $\phi:[\lambda_{\rm
  min},+\infty)^\J\to \R$
(recall \eqref{eq:30}).
We consider the functions $S_k:\R^\J\to \R$, $1\le k\le
\J$, defined by 
\begin{equation}
  \label{eq:34}
  S_k(\mmu)=S_k(\mmu_\uparrow):=\sum_{h=1}^k\mu_{(h)}
\end{equation}
where for every $\boldsymbol \mu=(\mu_1,\cdots,\mu_\J)\in \R^\J$
we will denote by $\boldsymbol
\mu_\uparrow=(\mu_{(1)},\cdots,\mu_{(\J)})\in \Lambda^\J$
the vector obtained by increasing rearrangement of the component of
$\boldsymbol\mu$.

The functions $S_k$ induce a partial order on $\R^\J$ given by
\begin{equation}
  \label{eq:35}
  \mmu'\prec \mmu''\quad\text{if and only if }\quad
  S_k(\mmu')\ge S_k(\mmu'')\quad\text{for every }k=1,\cdots,\J;
\end{equation}
if \eqref{eq:35} holds we say that $\mmu'$ is weakly majorized by
$\mmu''$.

If $E\subset \Hilb$ is a subspace of dimension $d\ge \J$
and $V\in \Hilb$, we can consider the vector
$\llambda^\J(V,E)=(\lambda_1(V,E),\cdots,
\lambda_\J(V,E))$ of the eigenvalues of the restriction of $\E_V$ to
$E$.
The variational characterization easily shows that
\begin{equation}
  \label{eq:36}
  \lambda_k(V)\le \lambda_k(V,E)\quad\text{for every }1\le k\le \J,
\end{equation}
so that in particular $\llambda^\J(V,E)\prec \llambda^\J(V)$.
By a Theorem of Schur \cite[Chap.~9, B1]{moa}, if
$\ww=(w_1,\cdots,w_\J)\in \Ort^\J(E)$
and $\mmu=(\mu_1,\cdots,\mu_\J)$ with $\mu_k=\E_V(w_k)$ we also have
\begin{equation}
  \label{eq:37}
  \mmu=\E_V(\ww)\prec \llambda^\J(V,E).
\end{equation}
By selecting $E=\operatorname{Span}(\ww)$ we conclude that
\begin{equation}
  \label{eq:38}
 \E_V(\ww)\prec \llambda^\J(V)\quad\text{for every }\ww\in \Ort^\J(\Hilb).
\end{equation}
If $\phi\in \rmC^1\big([\lambda_{\rm min},+\infty)^\J\big)$ is
symmetric
then it satisfies the monotonicity condition
\begin{equation}
  \label{eq:39}
  \llambda'\prec \llambda''\quad\Rightarrow\quad \phi(\llambda')\ge \phi(\llambda'')
\end{equation}
if and only if $\phi$ is increasing and Schur-concave
\cite[Chap.~3, A8]{moa}, i.e.
\begin{equation}
  \label{eq:40}
  (\partial_{1}\phi(\llambda)-
  \partial_{2}\phi(\llambda))(\lambda_1-\lambda_2)\le 0
  \quad \text{for every }\llambda\in [\lambda_{\rm min},+\infty)^\J,
\end{equation}
which implies \eqref{eq:40bis} thanks to the symmetry of $\phi$.

It is worth noticing that this class contains all the concave
increasing functions, so that 
\begin{equation}
  \label{eq:41}
  \text{if $\varphi$ is induced by a symmetric, increasing and concave
    function $\phi$, then
    \eqref{eq:39} holds}.
\end{equation}
In particular $S_\J$ satisfies \eqref{eq:39}.
However, the class of symmetric Schur-concave functions
is much wider and stable w.r.t.~various kind of operations, see
\cite{moa}.
In particular
\begin{quote}
  all the elementary symmetric polynomials are Schur-concave 
\end{quote}
and
  increasing if $\lambda_{\rm min}\ge0$.
We deduce the following result.
\begin{proposition}[Concavity of $\HH$]
  \label{phiminlemma}
  Let $k\in \N$, $V\in \K$, $\phi\in \rmC^1([\lambda_{\rm
    min},+\infty)^k)$ be
  a symmetric, increasing and Schur-concave function.
  Then
\begin{equation}\label{phimin}
  \HH(V)=\min{\left\{\phi(\E_V(\ww))\;:\ww\in
    \big(\dom\cap L^4(\sfD,\mm)\big)^k\cap \Ort^k(\Hilb)\right\}}.
\end{equation}
If moreover $\phi$ is concave, then
the function $\HH$ is concave as well.
\end{proposition}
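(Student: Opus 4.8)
The plan is to derive the variational formula \eqref{phimin} from the majorization inequality \eqref{eq:38} together with the Schur-concavity characterization \eqref{eq:39}, and then deduce concavity of $\HH$ from formula \eqref{phimin} exactly as was done for $\sigma_k$ in Lemma \ref{le:Lip}. First I would establish the inequality ``$\le$'' in \eqref{phimin}: for any admissible frame $\ww\in(\dom\cap L^4)^k\cap\Ort^k(\Hilb)$, relation \eqref{eq:38} gives $\E_V(\ww)\prec\llambda^\J(V)=\llambda^k(V)$ (here I take $\J=k$ in the notation of \eqref{eq:38}, or restrict to the first $k$ components), and then \eqref{eq:39} applied to the symmetric increasing Schur-concave $\phi$ yields $\phi(\E_V(\ww))\ge\phi(\llambda^k(V))=\HH(V)$. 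Taking the infimum over all such $\ww$ gives $\inf\ge\HH(V)$. For the reverse inequality, I would simply exhibit a minimizing frame: any $\uu=(u_1,\dots,u_k)\in\uU^k(V)$ lies in $(\V_4)^k\cap\Ort^k(\Hilb)$ by \eqref{eq:67} and the $L^4$-regularity of eigenfunctions guaranteed by Assumption \eqref{ass:main}, and $\E_V(u_h)=\lambda_h(V)$, so $\phi(\E_V(\uu))=\phi(\llambda^k(V))=\HH(V)$. This shows the infimum is attained and equals $\HH(V)$, proving \eqref{phimin}.

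For the concavity statement, assume in addition that $\phi$ is concave. I would argue as in Lemma \ref{le:Lip}: for each fixed admissible frame $\ww$, the map $V\mapsto\E_V(\ww)=(\E(w_1)+\int_\sfD Vw_1^2\,\d\mm,\dots,\E(w_k)+\int_\sfD Vw_k^2\,\d\mm)$ is affine in $V$ (each component is an affine function of $V$), so $V\mapsto\phi(\E_V(\ww))$ is a concave function of $V$ (composition of a concave function with an affine map). By \eqref{phimin}, $\HH$ is the pointwise infimum over $\ww$ of this family of concave functions, hence concave on $\K$.

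The main point requiring care — the ``hard part,'' though it is really a bookkeeping issue rather than a deep obstacle — is making sure the admissible class of frames in \eqref{phimin} is exactly the one for which both inequalities go through: the $L^4$-integrability restriction on $\ww$ is needed so that $\E_V(w_h)=\E(w_h)+\int_\sfD Vw_h^2\,\d\mm$ is well-defined and finite for every $V\in\K$ (using $V\ge V_{\rm min}$ and $w_h^2\in L^2$), and one must recall that the eigenfunctions in $\uU^k(V)$ do satisfy this integrability by \eqref{ass:main}, so the ``$\ge$'' bound is actually attained. One should also note that \eqref{eq:38} is stated for $\Ort^\J(\Hilb)$ while we work with $\Ort^k(\Hilb)$; since the Schur argument of \cite[Chap.~9, B1]{moa} applies verbatim for any number of vectors, this is immediate, but it is worth a sentence. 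No compactness of sublevels or fine subdifferential analysis is needed here — the proposition is a clean consequence of the majorization machinery already set up.
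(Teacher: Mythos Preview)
Your proposal is correct and follows essentially the same argument as the paper: both derive \eqref{phimin} by combining the majorization inequality \eqref{eq:38} with the Schur-concavity monotonicity \eqref{eq:39} to get the lower bound, attain it with an eigenfunction frame $\uu\in\uU^k(V)$, and then deduce concavity of $\HH$ as an infimum of concave functions (each being $\phi$ composed with the affine map $V\mapsto\E_V(\ww)$). Your additional remarks on the role of the $L^4$ restriction and the $\J$ versus $k$ indexing are accurate clarifications of points the paper leaves implicit.
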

\begin{proof}
  \eqref{eq:38} and \eqref{eq:39} yield
  \begin{equation}
    \label{eq:42}
    \phi(\llambda^k(V))\le \phi(\E_V(\ww))\quad
    \text{for every }\ww\in
    \big(\dom\cap L^4(\sfD,\mm)\big)^k\cap \Ort^k(\Hilb).
  \end{equation}
  On the other hand, the equality is attained by selecting $\ww\in
  \uU^k(V)$.

  When $\phi$ is concave the maps
  $V\mapsto \phi(\E_V(\ww)) $ are concave
  since they are the composition of a concave with a linear function
  w.r.t.~$V$. It follows that $V\mapsto \phi(\llambda^k(V))$ is
  concave
  as well, since it is the minimum of a family of concave functions.
\end{proof}

\section{Regularity and subdifferentiability properties of $\FF$}
\label{section:F}
In this section we will collect the main properties of the functional $\FF$ from~\eqref{eq:13},
according to the setting presented in Section 2.A--2.E.
We will eventually discuss a further important consequence of 
\eqref{eq:29} from Section 2.F.
\begin{lemma}[Weak continuity and coercivity of $\FF$]
  \label{le:Flsc}
  For every $\eta>\theta$ the function
  $\FF_\eta:=\FF+\frac\eta2|\cdot|^2$
  is weakly lower semicontinuous and
  there exists a constant $S(\eta)\ge 0$ such that
  \begin{equation}
    \label{eq:107}
    \FF_\eta(V)\ge \delta|V|^2-S(\eta)\quad\text{for every }V\in
    \Hilb,\quad
    \delta:=(\eta-\theta)/6;
  \end{equation}
  in particular the sublevels of $\FF_\eta$ are bounded (thus
  weakly compact) in $\Hilb$.\\
  For every $a\ge 0$ there exists $c=c(a)>0$ such that
  if $|V|\le a$ and $\FF(V)\le a$ then $V\in \Kc c$.\\  
  In particular for every $\tau>0$ such that $\tau\theta<1$
  and every $V\in \Hilb$ the
  functional $\Phi(\tau,V;\cdot):\Hilb\to \R\cup\{+\infty\}$ 
  \begin{equation}
    \label{eq:105}
    \Phi(\tau,V;W):= \frac{1}{2\tau}|W-V|^2+\FF(W),\quad
    W\in \Hilb
  \end{equation}
  has a minimizer.  
\end{lemma}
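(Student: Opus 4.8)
The plan is to reduce everything to an \emph{a priori} lower bound on $\HH$ that grows at most linearly in $|V|$, after which all three assertions follow by convex-analytic bookkeeping and the direct method. First I would bound the $\J$-th eigenvalue: using the min-max formula \eqref{eq:21} with a \emph{fixed} $\J$-dimensional test subspace $E_\J\subset\V_4$ (for instance $E_\J=\Span(e_1,\dots,e_\J)$ with $e_1,\dots,e_\J$ the first $\J$ eigenfunctions of $\E_{V_o}$ for some fixed $V_o\in\K$, which lie in $\V_4\subset D(\E_V)$ by \eqref{eq:67}), and estimating $\int_\sfD V\,u^2\,\d\mm\le\|V\|_{L^2}\,\|u\|_{L^4}^2$ together with the equivalence of all norms on the finite-dimensional space $E_\J$, one gets $\lambda_\J(V)\le C_0(1+|V|)$ for every $V\in\K$, with $C_0$ depending only on $E_\J$. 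Combined with the growth condition \eqref{eq:65} on $\varphi$ this yields $\HH(V)\ge -A-AC_0(1+|V|)$ on $\K$.

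Next I would use that $\GG_\theta=\GG+\tfrac\theta2|\cdot|^2$ is convex and lower semicontinuous by \eqref{eq:66}, hence bounded below by an affine function $\langle\ell,\cdot\rangle-b$. Adding the two bounds and $\tfrac\eta2|V|^2$ gives $\FF_\eta(V)\ge\tfrac{\eta-\theta}2|V|^2-C_1|V|-C_2$ on $\K$ (and trivially $\FF_\eta=+\infty$ off $\K$); completing the square so as to keep a third of the quadratic term free to absorb the linear one produces exactly \eqref{eq:107} with $\delta=(\eta-\theta)/6$, and boundedness (hence weak compactness) of the sublevels of $\FF_\eta$ is then immediate. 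The same two bounds, read in the form $\GG_\theta(V)=\FF(V)-\HH(V)+\tfrac\theta2|V|^2$, show that $|V|\le a$ and $\FF(V)\le a$ force $\GG_\theta(V)\le c(a)$ for an explicit $c(a)$; after enlarging $c(a)$ so that $c(a)\ge a$ we conclude $V\in\Kc{c(a)}$, which is the second assertion.

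For the weak lower semicontinuity of $\FF_\eta$ I would take $V_n\weakto V$ with $L:=\liminf_n\FF_\eta(V_n)<+\infty$ and pass to a subsequence realizing the liminf. By \eqref{eq:107} the norms $|V_n|$ are bounded, and eventually $\FF(V_n)\le\FF_\eta(V_n)\le L+1$, so by the previous step all $V_n$ (for $n$ large) lie in a single set $\Kc c$; since $\Kc c$ is closed, bounded and convex by \eqref{eq:55}, it is weakly closed, whence $V\in\Kc c\subset\K$. Then $\HH(V_n)=\varphi(\llambda^\J(V_n))\to\varphi(\llambda^\J(V))=\HH(V)$ by the weak continuity of the eigenvalues, Lemma~\ref{le:conteigpot}, while $\GG_\theta$ and $|\cdot|^2$ are weakly lower semicontinuous; writing $\FF_\eta(V_n)=\HH(V_n)+\GG_\theta(V_n)+\tfrac{\eta-\theta}2|V_n|^2$ and using $\eta>\theta$ gives $\liminf_n\FF_\eta(V_n)\ge\FF_\eta(V)$.

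Finally, for $\tau>0$ with $\tau\theta<1$ I would set $\eta:=1/\tau>\theta$ and observe that $\Phi(\tau,V;W)=\FF_\eta(W)-\tfrac1\tau\langle W,V\rangle+\tfrac1{2\tau}|V|^2$ is the sum of the coercive, weakly lower semicontinuous functional $\FF_\eta$ and a weakly continuous affine term; hence $\Phi(\tau,V;\cdot)$ is coercive and weakly lower semicontinuous on $\Hilb$, and, being finite at any point of the nonempty set $\K$, it attains its minimum by the direct method. The only step I expect to require genuine thought is the linear a priori bound $\lambda_\J(V)\le C_0(1+|V|)$ and the observation that, via \eqref{eq:65}, it controls $\HH$ from below; a secondary point to keep in mind is that $\K$ itself need not be weakly closed, so one must always descend to a sublevel $\Kc c$ before invoking weak compactness.
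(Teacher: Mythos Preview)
Your proposal is correct and follows the same overall architecture as the paper's proof: obtain a linear-in-$|V|$ lower bound on $\HH$, combine it with an affine lower bound on the convex function $\GG_\theta$ to get \eqref{eq:107}, deduce the $\Kc{c(a)}$ confinement, use weak continuity of $\HH$ on a fixed $\Kc c$ together with weak lower semicontinuity of the convex part to get weak lower semicontinuity of $\FF_\eta$, and conclude by the direct method.

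The one genuine difference is in how the linear bound on $\HH$ is produced. You bound $\lambda_\J(V)\le C_0(1+|V|)$ directly from the min--max formula \eqref{eq:21} by inserting a \emph{single} fixed $\J$-dimensional subspace of $\V_4$ and using $\int_\sfD Vu^2\,\d\mm\le |V|\,\|u\|_{L^4}^2$ together with the equivalence of norms on a finite-dimensional space. The paper instead controls $(\lambda_\J)_+$ by $\sigma_\J(V)$ (up to constants), dominates $\sigma_\J$ by the globally defined concave Lipschitz function $\sigma_{\J,c_o}$ of Lemma~\ref{le:Lip}, and then invokes the fact that a finite continuous concave function on $\Hilb$ lies above an affine function. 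Your route is more elementary and self-contained, as it does not require the $\sigma_{k,c}$ machinery; the paper's route has the advantage of reusing objects (the functions $\sigma_{k,c}$) that are already needed for the subdifferential analysis in Section~\ref{section:F}.
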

\begin{proof}
  By \eqref{eq:65} and the fact that
  \begin{displaymath}
    (\lambda_\J)_+\le (\lambda_{\rm min})_++
    \sum_{j=1}^\J (\lambda_j-\lambda_{\rm min})
    \le (\lambda_{\rm min})_++ \J (\lambda_{\rm min})_-+\sum_{j=1}^\J \lambda_j,\qquad \llambda\in \Lambda^\J,
  \end{displaymath}
  we obtain
  \begin{displaymath}
    \HH(V)\ge -A_1(1+\sigma_\J(V))\ge
    -A_1(1+ \sigma_{\J,c_o}(V))
    \quad\text{for every $V\in \K$}
  \end{displaymath}
  with $A_1:=A(1+\J |\lambda_{\rm min}|)$.
  Since the function
  $V\mapsto -A_1\sigma_{\J,c_o}(V)$ is convex, finite, and 
  continuous in $\K $
  thanks to the representation \eqref{eq:72},
  it is bounded from below by an affine function, so that
  there exists a constant $A_2>0$ such that
  \begin{equation}
    \label{eq:75}
    \HH(V)\ge -A_2(1+|V|)\quad\text{for every }V\in \Hilb.
  \end{equation}
  Setting $\delta:=(\eta-\theta)/6$ and 
  $A_3:=A_2+A_2^2/4\delta$
  we get
  \begin{equation}\label{eq:80}
    \FF_\eta(V)\ge -A_2(1+|V|)+\GG_\theta(V)+
                 3\delta|V|^2\ge -A_3+\GG_\theta(V)+2\delta|V|^2,
  \end{equation}
  showing that every sublevel of $\FF_\eta$ is contained in a suitable
  sublevel of $\GG_\theta$. 
  Since $\GG_\theta$ is convex, we have for some $A_4\ge0$
  \begin{equation}
    \label{eq:79}
    \GG_\theta(V)\ge -A_4(1+|V|) \quad\text{for every }V\in \Hilb,
  \end{equation}
  so that \eqref{eq:80} yields for $A_5:=A_3+A_4$ and
  $A_6:= A_5+A_5^2/4\delta$
  \begin{equation}
    \label{eq:106}
    \FF_\eta(V)\ge -A_5(1+|V|)+2\delta|V|^2
    \ge -A_6+\delta|V|^2,
  \end{equation}
  showing \eqref{eq:107}.
  In particular if $\FF(V)\le a$ and $|V|\le a$ then \eqref{eq:80}
  shows that $V\in \Kc c $
  whenever $c\ge a+\frac12\eta ^2a+A_3$.

  Since the restriction of $\HH$ to $\Kc c$ is weakly continuous
  and $\GG_\eta$ is convex and weakly lower semicontinuous as well, we
  conclude
  that $\FF_\eta$ is also weakly lower semicontinuous.
  Since
  \begin{displaymath}
    \Phi(\tau,V;W)=
    \frac1{2\tau}|V|^2-\frac1\tau\langle W,V\rangle+\FF_{\tau^{-1}}(W),
  \end{displaymath}
  if $\tau^{-1}>\theta$ we immediately get that $\Phi(\tau,V;\cdot)$
  has a minimizer.
\end{proof}
Let us now study the properties of the limiting subdifferential of
$\FF$.
We will also consider a weaker notion
of $\ell$-subdifferential: we say that 
$\xi$ belongs to $\partial_\ell^-\FF(v)$
if there exist sequences $v_n,\xi_n\in \Hilb$ such that 
\begin{equation}\label{eq:limitsubdiffweak}
  \xi_n\in \partial_F^-\FF(v_n),\quad v_n\rightarrow v\;\text{strongly
    in $\Hilb$},\quad \xi_n\rightharpoonup \xi\;\text{weakly in
  $\Hilb$},\quad \sup_{n}\FF(v_n)<\infty,
\end{equation}
see also Remark~\ref{rem:tedious}.

\begin{lemma}[Decomposition of the limiting subdifferential of $\FF$ -
  I]
  \label{Elk1}
  For every $V\in \K$ we have $\partial_\ell^-\FF(V)=\partial_L^-\FF(V)$.\\
  If $V\in \Kc c$, $\xi\in \partial^-_L\FF(V)$, and 
  $c_1>c$ then
  there exist
  $\xi_H\in \partial_L^- \HH_{c_1}(V)$
  and
  $\xi_K\in \partial^-_F \GG(V)$ such that
  $\xi=\xi_H+\xi_K$.
  In particular
  there exist $\xi_j\in \cconv\Big(\Sigma_j(V)\Big)$
  such that
  \begin{equation}
    \label{eq:109}
    \xi=\sum_{j=1}^J\gamma_j\xi_j+\xi_K,\quad
    \gamma_j=\partial_j\psi_{c_1}(\ssigma(V)).
  \end{equation}
\end{lemma}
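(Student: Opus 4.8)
The plan is to localize to a fixed sublevel of $\FF$, where $\HH$ may be replaced by the globally Lipschitz surrogate $\HH_{c_1}$, and then to combine the sum rule for limiting subdifferentials with the structural description of $\partial_L^-\HH_{c_1}$ supplied by Proposition~\ref{prop:compactness}. The starting observation is that whenever $V_n\to V$ strongly in $\Hilb$ with $\sup_n\FF(V_n)<\infty$ one has $\sup_n|V_n|<\infty$, so Lemma~\ref{le:Flsc} produces a single $c_*$ with $V_n,V\in\Kc{c_*}$; on $\Kc{c_*}$ the map $\HH$ is weakly — hence strongly — continuous and coincides with $\HH_c$ for every $c\ge c_*$. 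This makes the inclusion $\partial_L^-\FF(V)\subseteq\partial_\ell^-\FF(V)$ trivial, and reduces part~(1) to upgrading, along a sequence realizing $\xi\in\partial_\ell^-\FF(V)$, the bound $\sup_n\FF(V_n)<\infty$ to $\FF(V_n)\to\FF(V)$.

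Fix now $c_1>c$ large enough (the threshold depending only on $c_*$, $\theta$ and the structural constants), and set $\bar\FF:=\HH_{c_1}+\GG$, which agrees with $\FF$ on $\Kc{c_1}\supseteq\Kc{c_*}$. The crucial — and, I expect, the only genuinely delicate — step is to show that $\partial_F^-\FF(W)\subseteq\partial_F^-\bar\FF(W)$ for every $W\in\Kc{c_*}$. Since the Fr\'echet subdifferential only sees $\bar\FF$ near $W$, it is enough to control $\bar\FF$ at points $w$ close to $W$ with $w\notin\Kc{c_1}$: for such $w$ we have $|w|<c_1$ (because $|W|\le c_*<c_1$ and $w$ is close to $W$), hence $\GG_\theta(w)>c_1$, and therefore, using the Lipschitz bound on $\HH_{c_1}$, $\bar\FF(w)=\HH_{c_1}(w)+\GG_\theta(w)-\tfrac\theta2|w|^2>\bar\FF(W)+1$ once $c_1$ is large. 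As $\bar\FF=\FF$ on $\Kc{c_1}$ while $\bar\FF$ jumps above $\bar\FF(W)+1$ on the rest of a small ball about $W$, the difference quotients defining $\partial_F^-\bar\FF(W)$ are controlled by those of $\FF$, which proves the claim; this is precisely where the coercivity of $\FF$ from Lemma~\ref{le:Flsc} enters.

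For part~(2), take $\xi\in\partial_L^-\FF(V)$ with witnesses $V_n\to V$, $\xi_n\in\partial_F^-\FF(V_n)$, $\xi_n\weakto\xi$, $\FF(V_n)\to\FF(V)$, so $V_n,V\in\Kc{c_*}\subseteq\Kc{c_1}$. By the claim $\xi_n\in\partial_F^-\bar\FF(V_n)$; moreover $\HH_{c_1}$ is weakly continuous and $\GG(V_n)=\FF(V_n)-\HH(V_n)\to\GG(V)$, so $\bar\FF(V_n)\to\bar\FF(V)$ and thus $\xi\in\partial_L^-\bar\FF(V)$. The sum rule for limiting subdifferentials \cite[Ch.~1, Prop.~10.1]{Clarke}, applicable because $\HH_{c_1}$ is globally Lipschitz and $\GG$ is lower semicontinuous, gives $\xi=\xi_H+\xi_K$ with $\xi_H\in\partial_L^-\HH_{c_1}(V)$ and $\xi_K\in\partial_L^-\GG(V)=\partial_F^-\GG(V)$ (the last equality since $\GG$ is $-\theta$-convex and lower semicontinuous). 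Finally Proposition~\ref{prop:compactness}(2) yields $\xi_H\in\subH_{c_1}(V)$, and since $V\in\Kc{c_1}$ we have $\ssigma_{c_1}(V)=\ssigma(V)$ and $\partial^+\sigma_{j,c_1}(V)=\cconv(\Sigma_j(V))$ by \eqref{eq:89}; unwinding the definition \eqref{eq:93b} of $\subH_{c_1}$ then gives exactly \eqref{eq:109} with $\gamma_j=\partial_j\psi_{c_1}(\ssigma(V))$.

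For part~(1), run the same argument at each $V_n$: the claim together with the sum rule produce $\xi_n=\alpha_n+\beta_n$ with $\alpha_n\in\partial_L^-\HH_{c_1}(V_n)\subseteq\subH_{c_1}(V_n)$ and $\beta_n\in\partial_F^-\GG(V_n)$. By Proposition~\ref{prop:compactness}(1) the set $\bigcup_{W\in\Kc{c_*}}\subH_{c_1}(W)$ is strongly compact, hence bounded, so $(\alpha_n)$ and then $(\beta_n)=(\xi_n-\alpha_n)$ are bounded; testing the subgradient inequality \eqref{eq:73} for the $-\theta$-convex function $\GG$ at $V_n$ with $w=V$ gives $\GG(V)\ge\GG(V_n)+\langle\beta_n,V-V_n\rangle-\tfrac\theta2|V-V_n|^2$, whence $\limsup_n\GG(V_n)\le\GG(V)$, and with lower semicontinuity $\GG(V_n)\to\GG(V)$. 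Since also $\HH(V_n)\to\HH(V)$ by weak continuity on $\Kc{c_*}$, we get $\FF(V_n)\to\FF(V)$, i.e.\ $\xi\in\partial_L^-\FF(V)$; therefore $\partial_\ell^-\FF(V)=\partial_L^-\FF(V)$.
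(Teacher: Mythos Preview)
Your overall strategy matches the paper's: localize via the sublevels $\Kc{\cdot}$, show that Fr\'echet subdifferentials of $\FF$ are also Fr\'echet subdifferentials of $\HH_{c_1}+\GG$, apply the sum rule, and invoke Proposition~\ref{prop:compactness}. The argument for part~(1) and the derivation of \eqref{eq:109} are essentially correct.

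There is, however, a genuine gap in part~(2). The statement asserts the decomposition $\xi=\xi_H+\xi_K$ with $\xi_H\in\partial_L^-\HH_{c_1}(V)$ for \emph{every} $c_1>c$, but your argument only yields it for $c_1$ large in terms of $c_*$ (which in turn depends on the witnessing sequence $V_n$, hence on $\xi$). Your target inequality $\bar\FF(w)>\bar\FF(W)+1$ forces roughly $c_1>c_*+1+\tfrac\theta2 c_*^2$, and nothing guarantees that the \emph{given} $c_1>c$ exceeds this threshold. Since $\HH_{c_1}$ (and hence $\partial_L^-\HH_{c_1}(V)$) genuinely depends on $c_1$ outside $\Kc{c_1}$, you cannot simply transfer the conclusion from a large $c_1'$ to a smaller $c_1>c$.

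The paper closes this gap by two refinements. First, in the Fr\'echet case with $V\in\Kc c$ and \emph{arbitrary} $c_1>c$, it uses the sharper estimate $\GG(W)-\GG(V)\ge(c_1-c)-\theta c_1\delta$ for $W\notin\Kc{c_1}$ with $|W-V|\le\delta$, and then chooses $\delta$ small (depending on $c_1-c$, $|\xi|$, and the Lipschitz constant of $\HH_{c_1}$) so that the subdifferential inequality for $\HH_{c_1}+\GG$ holds; no largeness of $c_1$ is needed, only smallness of $\delta$. Second, in the limiting case it proceeds in two passes: a first decomposition via some large auxiliary $c_2$ gives $\GG(V_n)\to\GG(V)$ (exactly your part-(1) argument), whence $V_n\in\Kc{c'}$ eventually for any $c'\in(c,c_1)$; one then \emph{redoes} the Fr\'echet-case argument at each $V_n\in\Kc{c'}$ with the given $c_1>c'$, obtaining $\xi_H^n\in\partial_L^-\HH_{c_1}(V_n)$, and passes to the limit using the closure of the limiting subdifferential and the compactness in Proposition~\ref{prop:compactness}. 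You already have all the ingredients; you just need to aim for an $o(|w-W|)$ lower bound rather than the fixed jump $+1$, and to rerun the Fr\'echet inclusion at the shrunken level $c'$ rather than at $c_*$.
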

\begin{proof}
  We set $a:=c_1-c$
  and we first consider the case when $\xi\in \partial^-_F\FF(V)$
  is an element of the Fr\'echet subdifferential of $\FF$.

  In this case there exists $\varrho>0$
  and a positive function $\omega:\Hilb\to\R$
  as in \eqref{eq:barrier} such that 
    \begin{equation}
    \label{eq:60bis}
    \HH(W)-\HH(V)+\GG(W)-\GG(V)-\langle \xi,W-V\rangle\ge
    -\omega(W-V)\quad
    \text{for every }W\in \rmB(V,\varrho).
  \end{equation}
  If $\delta<a$, $W\in \rmB(V,\delta)$, and $W\not\in \Kc{c_1}$ then
    \begin{align*}
    \GG(W)-\GG(V)
    &=
    \GG(W)+\frac \theta2|W|^2-\big(\GG(V)+
    \frac\theta 2|V|^2\big)-
    \frac\theta 2\big(|W|^2-|V|^2\big)
    \\&\ge
      a-\frac\theta 2(|W|+|V|)|W-V|
      \ge a-\theta c_1\delta,
    \end{align*}
    since $|W|\le |V|+\delta\le c_1$ and $|V|\le c\le c_1$.
    On the other hand, if $L$ is the Lipschitz constant of $
    \HH_{c_1}$
    we get
    \begin{equation}
      \label{eq:110}
      \HH_{c_1}(W)-\HH_{c_1}(V)+\GG(W)-\GG(V)-\langle
      \xi,W-V\rangle
      \ge a-(L+\theta c_1+|\xi|)\delta\ge 0,
    \end{equation}
    if we choose $\delta>0$ so small that
    $(L+\theta c_1+|\xi|)\delta<a$.
    Possibly replacing $\varrho$ with $\delta$, since $ \HH_{c_1}$
    concides with $\HH$ on $\Kc {c_1}$ we deduce from \eqref{eq:60bis}
    and \eqref{eq:110} that
    $\xi\in \partial_F^-(\HH_{c_1}+\GG)(V)$.
    We can then apply the sum rule for the limiting subdifferential
    \cite{Clarke} and we 
    obtain the decomposition
    \begin{equation}
      \label{eq:111}
      \xi=\xi_H+\xi_K,\quad
      \xi_H\in \partial^-_L \HH_{c_1}(V),\quad
      \xi_K\in \partial^-_F\GG(V).
    \end{equation}
    Let us now consider the general case when
    $\xi\in \partial_\ell\FF(V)$. 
    By \eqref{eq:limitsubdiffweak}, we can find
    $V_n\in \K$ and $\xi_n\in \partial_F^-\FF(V_n)$ such that
    \begin{equation}
      \label{eq:112}
    V_n\to V,\quad
    \xi_n\weakto \xi,\quad
    \sup_n\FF(V_n)\le C<+\infty.
  \end{equation}
  We thus find a suitably large constant $c_2$ such that $V_n\in \Kc
  {c_2-1}$ and therefore we can decompose
  $\xi_n$ as
  \begin{equation}
    \label{eq:113}
    \xi_n=\xi_H^n+\xi_K^n,\quad
    \xi_H^n\in \partial^-_L \HH_{c_2}(V_n),\quad
    \xi_K^n\in \partial^-_F\GG(V_n).
  \end{equation}
  It follows that $\xi_H^n$ is uniformly bounded, so that also
  $\xi_K^n$ is uniformly bounded.
  Since $\GG$ is $(-\theta)$-convex, this implies that
  $\GG(V_n)\to \GG(V)$; on the other hand
  $\HH$ is continuous in $\K$ so that
  $\HH(V_n)\to \HH(V)$ as well, showing that
  $\xi\in \partial_L^-\FF(V)$.
  
  Choosing $c'\in (c,c_1)$ we definitely have $V_n\in \Kc {c'}$.
  We can thus refine the decomposition \eqref{eq:113}
  and assume that $\xi_H^n\in \partial^-_L
  \HH_{c_1}(V_n)\subset \subH_{c_1}(V_n)$.
  We can now extract an increasing subsequence
  $k\mapsto n(k)$ such that
  $\xi^{n(k)}_H\to \xi_H$ for some $\xi_H\in \partial^-_L
  \HH_{c_1}(V_n)$
  (here we use the closedness of the limiting subdifferential)
  and therefore $\xi^{n(k)}_K\weakto \xi-\xi_H\in \partial_F^-\GG(V)$.

  \eqref{eq:109} then follows by Proposition \ref{prop:compactness}
  and \eqref{eq:108}.
\end{proof}
\begin{corollary}
  \label{cor:vitafacile}
  Under the same assumption of Lemma \ref{Elk1}, let us suppose
  that $\llambda^{\J+1}(V)\in \Int(\Lambda^{\J+1})$ so that
  $\uU^\J(V)$ satisfies the minimality properties
  \eqref{eq:minimality}--\eqref{eq:minimality2}.
  If $\xi\in \partial_L^-\FF(V)$ and $\uu\in \uU^J(V)$, then we have
    \begin{equation}
      \label{eq:59tris}
      \xi-\sum_{i=1}^\J\partial_i\varphi(\llambda^\J(V))u_i^2\in \partial_F^- \GG(V).
    \end{equation}
  \end{corollary}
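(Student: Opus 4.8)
The plan is to read off \eqref{eq:59tris} from the decomposition of $\partial_L^-\FF$ established in Lemma~\ref{Elk1}, after noting that the simplicity hypothesis $\llambda^{\J+1}(V)\in\Int(\Lambda^{\J+1})$ collapses every set $\cconv\big(\Sigma_j(V)\big)$, $1\le j\le\J$, to a single point. Concretely, since $V\in\Kc c$ I would fix any $c_1>c$ and apply Lemma~\ref{Elk1}: for the given $\xi\in\partial_L^-\FF(V)$ there are $\xi_j\in\cconv\big(\Sigma_j(V)\big)$, $1\le j\le\J$, and $\xi_K\in\partial_F^-\GG(V)$ with
\[
  \xi=\sum_{j=1}^\J\gamma_j\,\xi_j+\xi_K,\qquad
  \gamma_j=\partial_j\psi_{c_1}(\ssigma(V)).
\]

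Next, $\llambda^{\J+1}(V)\in\Int(\Lambda^{\J+1})$ means $\lambda_{\rm min}\le\lambda_1(V)<\lambda_2(V)<\dots<\lambda_{\J+1}(V)$, so in particular $\lambda_j(V)<\lambda_{j+1}(V)$ for every $j\in\{1,\dots,\J\}$ and, moreover, $\llambda^\J(V)\in\Int(\Lambda^\J)$. By \eqref{eq:94} each $\Sigma_j(V)$ is then a singleton; since each eigenspace relative to $\lambda_i(V)$, $i\le\J$, is one–dimensional, the element $\uu\in\uU^\J(V)$ is unique up to the choice of the signs of the $u_i$, so the squares $u_i^2$ are well defined. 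Hence $\cconv\big(\Sigma_j(V)\big)=\{\sum_{h=1}^j u_h^2\}$ and $\xi_j=\sum_{h=1}^j u_h^2$.

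It then remains to identify $\sum_{j=1}^\J\gamma_j\xi_j$. Because $\llambda^\J(V)$ is an interior point of $\Lambda^\J$ lying in $[\lambda_{\rm min},1+\ell_\J(c_1)]^\J$, the $\rmC^1$ extension $\varphi_{c_1}$ has the intrinsic partial derivatives of $\varphi$ there, $\partial_i\varphi_{c_1}(\llambda^\J(V))=\partial_i\varphi(\llambda^\J(V))=:p_i$; combining this with \eqref{eq:96} gives $\gamma_j=p_j-p_{j+1}$ for $j<\J$ and $\gamma_\J=p_\J$ (with the convention $p_{\J+1}:=0$). The Abel–summation identity already used in \eqref{eq:141}, together with $\xi_j-\xi_{j-1}=u_j^2$ and $\xi_1=u_1^2$, yields $\sum_{j=1}^\J\gamma_j\xi_j=\sum_{i=1}^\J p_i\,u_i^2=\sum_{i=1}^\J\partial_i\varphi(\llambda^\J(V))u_i^2$. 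Substituting into the decomposition of $\xi$ gives $\xi-\sum_{i=1}^\J\partial_i\varphi(\llambda^\J(V))u_i^2=\xi_K\in\partial_F^-\GG(V)$, which is exactly \eqref{eq:59tris}. I do not expect any real obstacle: the statement is essentially a specialization of Lemma~\ref{Elk1} to the case of simple eigenvalues plus the telescoping identity \eqref{eq:141}. The only points worth a line of justification are that each $u_i^2$ is well defined (one–dimensionality of the eigenspaces, from the strict inequalities) and that $\varphi_{c_1}$ shares the derivatives of $\varphi$ at the interior point $\llambda^\J(V)$; both are immediate.
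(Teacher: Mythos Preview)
Your proposal is correct and follows essentially the same approach as the paper: apply the decomposition \eqref{eq:109} from Lemma~\ref{Elk1}, use the simplicity hypothesis (via \eqref{eq:94}) to reduce each $\cconv(\Sigma_j(V))$ to the singleton $\{\sum_{h=1}^j u_h^2\}$, and then invoke the Abel summation identity \eqref{eq:141}. Your added remarks on the well-definedness of $u_i^2$ and on $\partial_i\varphi_{c_1}=\partial_i\varphi$ at the interior point are correct clarifications that the paper leaves implicit.
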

  \begin{proof}
    If $\llambda^{\J+1}(V)\in \Int(\Lambda^{\J+1})$ then (see
    \eqref{eq:131})
    $\lambda_1(V)<\lambda_2(V)<\cdots<\lambda_\J(V)<\lambda_{\J+1}(V)$
    so that the set of normalized $(V,\lambda_j(V))$ eigenfunctions
    contains only two (opposite) elements for $1\le j\le \J$ and
    \eqref{eq:minimality}--\eqref{eq:minimality2} hold.

    If $\uu\in \uU^\J(V)$
    we have $\Sigma_k(V)=\{\xi_k\}$ where $\xi_k=\sum_{j=1}^ku_j^2$
    for every $k\in \{1,\cdots,\J\}$.
    Using \eqref{eq:109}, 
    we  can then argue as in \eqref{eq:141} to obtain
    \begin{displaymath}
      \sum_{j=1}^\J \gamma_j\xi_j=
      \sum_{i=1}^\J\partial_i\varphi(\llambda^\J(V))u_i^2.\qedhere
    \end{displaymath}
  \end{proof}
As a further step, we will prove that
the limiting subdifferential of $\FF$ contains sufficient information
to get the following chain rule property
(cf.~condition \textsc{(chain$_2$)} of \cite[Thm.~3]{rs}).
\begin{proposition}[Chain rule]
  \label{prop:chain}
  Let $V\in H^1(0,T;\Hilb)$, $\xi\in L^2(0,T;\Hilb)$ such that
  $\xi(t)\in \partial_L^-\FF(V(t))$ for a.e.~$t\in (0,T)$ and
  $\FF\circ V$ is bounded.
  Then the map $\FF\circ V$ is absolutely continuous in $[0,T]$ and
  \begin{equation}
    \label{eq:114}
    \frac\d{\d t}\FF(V(t))=\langle
    \xi(t),V'(t)\rangle\quad\text{a.e.~in $(0,T)$}.
  \end{equation}
\end{proposition}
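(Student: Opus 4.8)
The strategy is to decompose $\FF = \HH + \GG$ and prove the chain rule for each piece separately, then add. The term $\GG$ is $-\theta$-convex and lower semicontinuous, so the standard chain rule for (perturbations of) convex functionals along $H^1$ curves applies directly: if $\xi_K(t)\in\partial_F^-\GG(V(t))$ is an $L^2$ selection and $\GG\circ V$ is bounded, then $\GG\circ V$ is absolutely continuous with $\frac{\d}{\d t}\GG(V(t))=\langle\xi_K(t),V'(t)\rangle$ a.e. The genuinely new content is the chain rule for $\HH$, which is neither convex nor a smooth perturbation of a convex functional; here I would exploit the \emph{two-sided} differentiability information we have assembled, namely that the limiting subdifferential $\partial_L^-\HH_c(V)$ is contained in $\subH_c(V)$ (Proposition~\ref{prop:compactness}) together with the superdifferentiability estimate of Theorem~\ref{thm:superdiff} when \eqref{eq:29} holds — but since Proposition~\ref{prop:chain} is stated without assuming \eqref{eq:29}, I must work instead with the concave building blocks $\sigma_{j,c}$.

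The key observation is that $\HH_c(V)=\psi_c(\sigma_{1,c}(V),\dots,\sigma_{\J,c}(V))$ is a $\rmC^1$ function $\psi_c$ composed with the vector $\ssigma_c$ of globally Lipschitz \emph{concave} functions. For each fixed $c\ge c_o$ large enough that $V(t)\in\Kc c$ for all $t$ (using that $\FF\circ V$ and hence $|V|$ is bounded, via Lemma~\ref{le:Flsc}), each $\sigma_{j,c}$ is Lipschitz, so $\sigma_{j,c}\circ V\in W^{1,\infty}(0,T)$ and is differentiable a.e. Being concave and Lipschitz, $\sigma_{j,c}$ satisfies the concave chain rule along Lipschitz curves: at a.e. $t$, for \emph{every} $\zeta\in\partial^+\sigma_{j,c}(V(t))$ one has $\frac{\d}{\d t}\sigma_{j,c}(V(t))=\langle\zeta,V'(t)\rangle$. (This is the elementary fact that a concave Lipschitz function composed with an absolutely continuous curve is absolutely continuous, and its derivative is given by pairing $V'(t)$ with any supergradient — the value is independent of the choice because all supergradients pair equally with $V'(t)$ at a point of differentiability of the composition.) Then by the $\rmC^1$ chain rule for $\psi_c$, $\HH_c\circ V$ is absolutely continuous with
\[
\frac{\d}{\d t}\HH_c(V(t))=\sum_{j=1}^\J\partial_j\psi_c(\ssigma_c(V(t)))\,\frac{\d}{\d t}\sigma_{j,c}(V(t))
=\sum_{j=1}^\J\gamma_j(t)\langle\xi_j(t),V'(t)\rangle=\langle\xi_H(t),V'(t)\rangle
\]
for any measurable selection $\xi_j(t)\in\partial^+\sigma_{j,c}(V(t))$, where $\xi_H(t)=\sum_j\gamma_j(t)\xi_j(t)\in\subH_c(V(t))$. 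Since $\HH_c=\HH$ on $\Kc c$, this gives the chain rule for $\HH\circ V$.

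It remains to match the given selection $\xi(t)\in\partial_L^-\FF(V(t))$ with this decomposition. By Lemma~\ref{Elk1}, $\xi(t)=\xi_H(t)+\xi_K(t)$ with $\xi_H(t)\in\partial_L^-\HH_{c_1}(V(t))\subset\subH_{c_1}(V(t))$ of the form $\sum_j\gamma_j(t)\xi_j(t)$, $\xi_j(t)\in\cconv(\Sigma_j(V(t)))=\partial^+\sigma_{j,c}(V(t))$, and $\xi_K(t)\in\partial_F^-\GG(V(t))$; one checks that $\xi_H,\xi_K$ can be chosen measurably (by a standard measurable-selection argument applied to the closed-graph multimaps, using Proposition~\ref{prop:compactness}) and, being bounded by $|\xi|$ and the Lipschitz constant, they lie in $L^2(0,T;\Hilb)$. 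Adding the two chain rules and using $V'(t)=V'(t)$ gives $\frac{\d}{\d t}\FF(V(t))=\langle\xi_H(t)+\xi_K(t),V'(t)\rangle=\langle\xi(t),V'(t)\rangle$ a.e. The main obstacle is the concave chain rule step — ensuring that the a.e. identity $\frac{\d}{\d t}\sigma_{j,c}(V(t))=\langle\xi_j(t),V'(t)\rangle$ holds \emph{simultaneously} for the chosen measurable supergradient selection, which requires noting that at any Lebesgue point where $\sigma_{j,c}\circ V$ is differentiable, the concavity inequality $\sigma_{j,c}(V(t+h))-\sigma_{j,c}(V(t))\le\langle\zeta,V(t+h)-V(t)\rangle$ forces the derivative to equal $\langle\zeta,V'(t)\rangle$ for every $\zeta\in\partial^+\sigma_{j,c}(V(t))$, combined with the differentiability a.e. of $t\mapsto V(t)$ in $H^1$.
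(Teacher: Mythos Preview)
Your proposal is correct and follows essentially the same approach as the paper's proof: both fix $c$ large enough so that $V(t)\in\Kc c$ for all $t$ (via Lemma~\ref{le:Flsc}), decompose $\xi=\xi_H+\xi_K$ using Lemma~\ref{Elk1} with $\xi_H\in\subH_c(V)$ written as $\sum_j\gamma_j\xi_j$ and $\xi_K\in\partial_F^-\GG(V)$, apply the standard chain rule for the $-\theta$-convex functional $\GG$ and the concave chain rule for each $\sigma_{j,c}$, and then combine via the $\rmC^1$ chain rule for $\psi_c$. The paper's argument is slightly terser on the concave chain rule step (stating \eqref{eq:117} directly) and slightly more explicit on why $\GG\circ V$ is absolutely continuous (deducing first that $\partial_F^\circ\GG(V(\cdot))\in L^2$ from the boundedness of $\xi_H$), but the logical structure is the same.
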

\begin{proof}
  Since $\FF\circ V$ is bounded and $V$ is bounded as well in $\Hilb$
  being $V\in H^1(0,T;\Hilb)$,
  by Lemma \ref{le:Flsc} there exists a constant $c\ge c_o+1$ such that
  $V(t)\in \Kc {c-1}$ for every $t\in [0,T]$.

  We deduce that $\FF\circ V= \FF_{c}\circ V$ where
  $\FF_c=\GG+\HH_c$.
  Since $ \HH_c$ is a Lipschitz function, the composition
  $t\mapsto \HH_c\circ V(t)$ is absolutely continuous.
  Moreover by Lemma \ref{Elk1} we can decompose $\xi(t)$ as
  \begin{equation}
    \label{eq:115}
    \xi(t)=\xi_H(t)+\xi_K(t),\quad
    \xi_H\in \partial_L^-\HH_c(V(t)),\quad
    \xi_K(t)\in \partial_F^-\GG(V(t))\quad
    \text{for a.e.~$t\in (0,T)$}.
  \end{equation}
  Since $\HH_c$ is Lipschitz, $\xi_H$ is uniformly bounded and
  therefore
  the minimal selection $t\mapsto \partial_F^\circ\GG(V(t))$ is a
  function in $L^2(0,T;\Hilb)$.
  Being $\GG$ the difference between a convex function and a quadratic
  one, we conclude that
  $t\mapsto \GG(V(t))$ is absolutely continuous as well.

  We can then find a Borel set $D\subset (0,T)$ of full Lebesgue
  measure such that
  the functions $V,\ \HH_c\circ V,\ \GG\circ V,\ \sigma_{j,c}\circ
  V$ are differentiable at every $t\in D$, $j=1,\cdots,\J$, and there exist
  $\xi_j(t)\in \partial^+\sigma_{j,c}(V(t))$ and $\xi_K(t)\in
  \partial_F^-\GG(V(t))$ such that
  \begin{equation}
    \label{eq:116}
    \xi(t)=\sum_{j=1}^J\gamma_j(t)\xi_j(t)+\xi_K(t),\quad
    \gamma_j(t)=\partial_j\psi_c(\ssigma_{k,c}(V))\quad
    \text{for every }t\in D,
  \end{equation}
  thanks to \eqref{eq:109}.
  Since $\sigma_{j,c}$ are concave and $\GG$ is $(-\theta)$-convex,
  we have
  \begin{equation}
    \label{eq:117}
    \frac\d{\d t}\sigma_{j,c}(V(t))=
    \langle\xi_j(t),V'(t)\rangle,\quad
    \frac\d{\d t}\GG(V(t))=\langle \xi_K(t),V'(t)\rangle
    \quad\text{for every }t\in D.
  \end{equation}
  Since $\psi_c$ is of class $\rmC^1$ we clearly have
  \begin{equation}
    \label{eq:118}
    \frac\d{\d t}\HH_c(V(t))=
    \frac\d{\d t}\psi_c(\ssigma_c(V(t))=
    \sum_{j=1}^\J \gamma_j (t) \frac\d{\d t }\sigma_{j,c}(V(t))
    \quad\text{for every }t\in D.
  \end{equation}
  Combining \eqref{eq:118}, \eqref{eq:117} and \eqref{eq:116} we get
  \eqref{eq:114}.
\end{proof}
We conclude this section by showing a more refined decomposition of
$\partial_L^-\FF$ in the case $\varphi$ satisfies also the
structural condition \eqref{eq:29} of Section 2.F.
\begin{lemma}[Decomposition of the subdifferential of $\FF$ - II]
  \label{ELk}
  Let us suppose that all the assumptions of Section 2 are satisfied,
  including \eqref{eq:29}.
  \begin{enumerate}
  \item
    If $V\in \K$ and $\xi\in \partial^-_F\FF(V)$ then for every
    $\uu\in \uU^\J(V)$ 
    \begin{equation}
      \label{eq:59}
      \xi-\sum_{i=1}^\J\partial_i\varphi(\llambda^\J(V))u_i^2\in \partial_F^-\GG(V).
    \end{equation}
  \item
    If $V\in \K$ and $\xi\in \partial_L^-\FF(V)$ then there
    exist $\uu\in \uU^\J(V)$ and $\xi_K\in \partial^-_F\GG(V)$ such
    that
    \begin{equation}
      \label{eq:59bis}
      \xi=\sum_{i=1}^\J\partial_i\varphi(\llambda^\J(V))u_i^2+ \xi_K.
    \end{equation}
  \end{enumerate}
\end{lemma}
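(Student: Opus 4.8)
The plan is to obtain (2) from (1) by a strong--weak closure argument, while (1) itself will follow by combining the superdifferentiability result of Theorem~\ref{thm:superdiff} with the localization already performed in the first part of the proof of Lemma~\ref{Elk1}.

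For statement (1), I would fix $V\in\K$ and $\xi\in\partial^-_F\FF(V)$. Since $\GG(V)<\infty$ and $|V|<\infty$, one can choose $c\ge c_o$ with $V\in\Kc c$ and then some $c_1>c$. Repeating verbatim the localization step in the proof of Lemma~\ref{Elk1} --- which exploits that $W\notin\Kc{c_1}$ forces $\GG_\theta(W)>c_1$, so that $\GG(W)-\GG(V)$ is bounded below by a positive constant on a sufficiently small ball, while $\HH_{c_1}$ is Lipschitz and agrees with $\HH$ on $\Kc{c_1}$ --- one gets $\xi\in\partial^-_F(\HH_{c_1}+\GG)(V)$, i.e. there are $\varrho>0$ and a barrier $\omega_1$ as in \eqref{eq:barrier} such that
\[
  \HH_{c_1}(W)-\HH_{c_1}(V)+\GG(W)-\GG(V)-\langle\xi,W-V\rangle\ge-\omega_1(W-V)\qquad\text{for all }W\in\rmB(V,\varrho).
\]
On the other hand, since $V\in\Kc{c_1}$ and \eqref{eq:29} holds, Theorem~\ref{thm:superdiff} ensures that, for every $\uu\in\uU^\J(V)$, the vector $\xi_H:=\sum_{i=1}^\J\partial_i\varphi(\llambda^\J(V))u_i^2$ belongs to $\partial^+_F\HH_{c_1}(V)$, hence $\HH_{c_1}(W)-\HH_{c_1}(V)-\langle\xi_H,W-V\rangle\le\omega_2(W-V)$ on a ball around $V$ for a suitable barrier $\omega_2$. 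Subtracting these two inequalities on a common ball and using that $\omega_1+\omega_2$ is again a barrier yields $\GG(W)-\GG(V)-\langle\xi-\xi_H,W-V\rangle\ge-(\omega_1+\omega_2)(W-V)$ near $V$, which is exactly $\xi-\xi_H\in\partial^-_F\GG(V)$, i.e. \eqref{eq:59}.

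For statement (2), given $\xi\in\partial^-_L\FF(V)$ I would pick, as in Definition~\ref{subdiff}, sequences $V_n\to V$ strongly and $\xi_n\weakto\xi$ weakly with $\xi_n\in\partial^-_F\FF(V_n)$ and $\FF(V_n)\to\FF(V)$. Applying (1) at each $V_n$, with an arbitrary $\uu_n\in\uU^\J(V_n)$, gives $\zeta_n:=\xi_n-\sum_{i=1}^\J\partial_i\varphi(\llambda^\J(V_n))u_{n,i}^2\in\partial^-_F\GG(V_n)$. Since $\sup_n|V_n|<\infty$ and $\sup_n\FF(V_n)<\infty$, Lemma~\ref{le:Flsc} provides $c\ge c_o$ with $V_n,V\in\Kc c$; then Lemma~\ref{le:conteigpot} (see also Corollary~\ref{cor:compactnessU}) allows one to extract a subsequence along which $\uu_n\to\uu$ strongly in $(\V_4)^\J$ for some $\uu\in\uU^\J(V)$. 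Hence $u_{n,i}^2\to u_i^2$ strongly in $\Hilb$, because $|u_{n,i}^2-u_i^2|\le\|u_{n,i}-u_i\|_{L^4(\sfD,\mm)}\,\|u_{n,i}+u_i\|_{L^4(\sfD,\mm)}\to0$, and $\partial_i\varphi(\llambda^\J(V_n))\to\partial_i\varphi(\llambda^\J(V))$ by weak continuity of the eigenvalues together with $\varphi\in\rmC^1$; consequently $\zeta_n\weakto\xi-\xi_H$ weakly, where $\xi_H:=\sum_{i=1}^\J\partial_i\varphi(\llambda^\J(V))u_i^2$. Finally, since $\GG$ is $-\theta$-convex, \eqref{eq:73} gives a global characterization of $\partial^-_F\GG$, and passing to the $\liminf$ there (using the lower semicontinuity of $\GG$) shows that the graph of $\partial^-_F\GG$ is closed under strong convergence of the base point together with weak convergence of the subgradient; therefore $\xi_K:=\xi-\xi_H\in\partial^-_F\GG(V)$, which is \eqref{eq:59bis}.

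The step I expect to be the main obstacle is the passage to the limit in (2): turning the weak convergence $\xi_n\weakto\xi$ into the clean splitting \eqref{eq:59bis} requires subtracting off the eigenfunction contribution, and for this one genuinely needs both the uniform confinement $V_n\in\Kc c$ from Lemma~\ref{le:Flsc} and the \emph{strong} $L^4$-compactness of eigenfunctions built into assumption 2.D (so that $u_{n,i}^2\to u_i^2$ \emph{strongly} and not merely weakly). Statement (1) is, by comparison, essentially a bookkeeping combination of Theorem~\ref{thm:superdiff} with the localization already available from Lemma~\ref{Elk1}.
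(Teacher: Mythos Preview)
Your proposal is correct and follows essentially the same route as the paper. The only cosmetic difference is in part (1): you first invoke the localization from Lemma~\ref{Elk1} to replace $\FF$ by $\HH_{c_1}+\GG$ on a full ball and then subtract the global superdifferential inequality for $\HH_{c_1}$ from Theorem~\ref{thm:superdiff}, whereas the paper applies \eqref{eq:44bis} directly on $\Kc c$ and handles $W\notin\Kc c$ separately at the level of $\GG$; both arrive at \eqref{eq:59} by the same mechanism, and your argument for (2) matches the paper's almost line by line.
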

\begin{proof}
  Let us first consider Claim (1).
  By Definition \ref{subdiff} we know that
  there exist $\varrho>0$ and
  a positive function $\omega_\FF:\Hilb\to\R$ as in \eqref{eq:barrier}
  such that 
  \[
\HH(W)-\HH(V)+\GG(W)-\GG(V)-\langle \xi,W-V\rangle\ge -\omega_\FF(W-V)
\]
for every $W\in \K\cap\rmB(V,\varrho).$
  Let us set $\xi_H:=\sum_{i=1}^\J\partial_i\varphi(\llambda^\J(V))u_i^2
  $
  for some $\uu\in \uU^\J(V)$ 
  and let us select $c\ge c_o$ such that\[
    c\ge |V|+1, \qquad \text{and}\qquad  c\ge \GG(V)+\frac 12\theta|V|^2+1
\] 
so that $V\in\Kc {c-1}$.

We can now apply 
\eqref{eq:44bis} and find $\varrho_1\in (0,\varrho)$
and a positive function $\omega_\HH:\Hilb\to \R$ as in \eqref{eq:barrier}
such that
  \begin{equation}
    \label{eq:61}
    \HH(W)-\HH(V)\le \langle
    \xi_H,W-V\rangle+\omega_\HH(W-V)\quad
    \text{for every }W\in \rmB(V,\varrho_1)\cap \Kc c
  \end{equation}
  so that
  \begin{equation}
    \label{eq:62bis}
    \GG(W)-\GG(V)-\langle \xi-\xi_H,W-V\rangle
    \ge - \Big(\omega_\HH(W-V)+\omega_\FF(W-V)\Big),    
  \end{equation}
  for every $W\in \Kc c\cap\rmB(V,\varrho_1)$.
  
  On the other hand, if $W\not \in \Kc c$
  and $|W-V|\le\delta$ we have
  \begin{align*}
    \GG(W)-\GG(V)
    &=
    \GG(W)+\frac \theta2|W|^2-\big(\GG(V)+
    \frac\theta 2|V|^2\big)-
    \frac\theta 2\big(|W|^2-|V|^2\big)
    \\&\ge
    1-\frac\theta 2(|W|+|V|)\delta,
  \end{align*}
  so that choosing $\delta<\varrho_1$ sufficiently small we obtain
  \begin{equation}
    \label{eq:63}
    \GG(W)-\GG(V)-\langle \xi-\xi_H,W-V\rangle\ge \delta/2\quad\text{if
      $W\not\in \Kc c$ and $|W-V|<\delta$.}
  \end{equation}
  This implies that
  \eqref{eq:62bis}
  holds for every $W\in \K\cap\rmB(V,\delta)$
  and therefore 
  $\xi-\xi_H\in \partial^-_F\GG(V)$.

  Claim (2) readily follows: by the definition of limiting
  subdifferential we can find
  a sequence $V_n\in \K$ strongly convergent to $V$ and
  $\xi_n\in \partial_F^-\FF(V_n)$ weakly convergent to $\xi$ with
  $\FF(V_n)\to \FF(V)$ as $n\to\infty$. 
  We can then select arbitrary $\uu_n\in \uU^\J(V_n)$ setting
  $\xi_H^n:=\sum_{i=1}^\J\partial_i\varphi(\llambda^\J(V_n))(u_i^n)^2$
  and $\xi_K^n:=\xi_n-\xi_H^n\in \partial_F^-\GG(V_n)$.

  Up to extracting a subsequence, we may assume that
  $\uu^n\to\uu\in \uU^\J(V)$ strongly in $\Hilb^\J$
  and $\llambda^\J(V_n)\to \llambda^\J(V)$ in $\Lambda^\J$, so that
  $\xi_H^n\to
  \xi_H:=\sum_{i=1}^\J\partial_i\varphi(\llambda^\J(V))u_i^2$
  strongly in $\Hilb$ thanks to the regularity of $\varphi$.
  Correspondingly we have
  $\xi_k^n\weakto \xi_K=\xi-\xi_H\in \partial_F^-\GG(V)$
  since $\partial_F^-\GG$ is strongly-weakly closed.
\end{proof}

\section{Convergence of the Minimizing Movement scheme and proof of
  the main results}\label{sec:minmov}

We now refer to the construction we introduced in Section
\ref{sec:main} (see in particular \eqref{eq:104},
\eqref{impleuler}, \eqref{eq:interpolantlpc} and Definition
\ref{def:GMM}) and we
briefly recap the main
general properties and estimates 
from the abstract theory of Minimizing Movements,
following \cite[Section~4]{rs}.
As usual, we operate in the setting of Section
\ref{sec:settingresults},
2.A--2.E.

\subsection{Existence, stability estimates and weak convergence of Generalized Minimizing Movements}

We start by proving the existence of Generalized Minimizing Movements in our setting. 
\begin{lemma}\label{le:h1h2h3ok}
  Let $\tau_*>0$ such that $\theta\tau_*<1$.
  \begin{enumerate}
  \item 
    For every $\tau\in (0,\tau_*)$ and $V_0\in \K$
    there exists a discrete solution $(V^n_\tau)_{0\le n\le N(\tau)}$
    to the Minimizing Movement scheme \eqref{impleuler}. The
    interpolating functions
    $V_\tau$ and $\bar V_\tau$
    satisfy the discrete equation
    \begin{equation}
      \label{eq:135}
      V'_\tau(t)\in -\partial_F^-\FF(\bar V_\tau(t))\quad\text{for
        a.e.~}t\in (0,T).
    \end{equation}
  \item
    There exists a constant $C$ independent of $\tau$
    such that for every discrete solution and for every $\tau\in (0,\tau_*)$
    \begin{align}\label{eq:4.33}
      \sup_{t\in [0,T]}|
      V_\tau(t)|
      \le
      \sup_{t\in [0,T]}|\bar V_\tau(t)|
      &\leq C\\
      \label{eq:4.34}
      \sup_{t\in [0,T]}
      \FF(\bar V_\tau(t))
      \leq \FF(V_0)&\le C,\\
        \label{eq:4.35}
      \|V_\tau'\|_{L^2(0,T;\Hilb)}
      &\leq C,\\
        \|\bar V_\tau-V_\tau\|_{L^\infty(0,T;\Hilb)}
      &\le C\tau^{1/2}.
    \end{align}
    \item There exists a constant $c$ independent of $\tau$ such that
  $V_\tau(t)\in \Kc c$ and $\bar V_\tau(t)\in \Kc c$ for every $t\in [0,T]$.
  \end{enumerate}
\end{lemma}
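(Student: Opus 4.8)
\emph{Strategy and Step~1 (existence and the discrete equation).} The plan is to run the classical De~Giorgi (implicit Euler) scheme, using Lemma~\ref{le:Flsc} for the solvability of each variational step and the coercivity estimate~\eqref{eq:107} together with the discrete energy inequality to close all a~priori bounds; no compactness of the sublevels of $\FF$ is needed here. Since $V_0\in\K=D(\FF)$ and $\theta\tau<1$, Lemma~\ref{le:Flsc} shows inductively that each step $V^n_\tau\in\argmin_{W}\big\{\FF(W)+\tfrac1{2\tau}|W-V^{n-1}_\tau|^2\big\}$ admits a solution, which automatically lies in $D(\FF)$ because the minimal value does not exceed $\FF(V^{n-1}_\tau)<+\infty$. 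At such a minimizer the Fr\'echet subdifferential of $W\mapsto\FF(W)+\tfrac1{2\tau}|W-V^{n-1}_\tau|^2$ contains $0$; as the quadratic term is of class $\rmC^1$, the sum rule for $\partial_F^-$ gives $-\tau^{-1}(V^n_\tau-V^{n-1}_\tau)\in\partial_F^-\FF(V^n_\tau)$. On each interval $(t_{n-1},t_n)$ one has $V'_\tau(t)=\tau^{-1}(V^n_\tau-V^{n-1}_\tau)$ and $\bar V_\tau(t)=V^n_\tau$, which is precisely~\eqref{eq:135}.

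\emph{Step~2 (a~priori bounds).} Testing the minimality of $V^n_\tau$ against $V^{n-1}_\tau$ produces the discrete energy inequality $\tfrac1{2\tau}|V^n_\tau-V^{n-1}_\tau|^2+\FF(V^n_\tau)\le\FF(V^{n-1}_\tau)$, which yields at once~\eqref{eq:4.34}, namely $\FF(\bar V_\tau(t))\le\FF(V_0)$, and, after summation, $D_n:=\sum_{k\le n}\tfrac1{2\tau}|V^k_\tau-V^{k-1}_\tau|^2\le\FF(V_0)-\FF(V^n_\tau)$. Combining this with the lower bound $\FF(V)\ge-C_\theta|V|^2-S$ coming from~\eqref{eq:107} and with the telescopic Cauchy--Schwarz estimate $|V^n_\tau|\le|V_0|+\sqrt{t_n}\,(2D_n)^{1/2}$, one obtains a self-improving inequality for $D_n$. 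The quadratic term $C_\theta|V^n_\tau|^2$ forces, for the absorption to be effective, that the time horizon be short compared with $1/C_\theta$; so the bound is first derived on a subinterval $[0,\ell]$ with $\ell$ depending only on $C_\theta$ and $\tau_*$, and then propagated to all of $[0,T]$ by iterating the argument over the $\lceil T/\ell\rceil$ consecutive blocks, each restart being fed by the bound on $|V_\tau|$ and by the energy monotonicity $\FF(V^m_\tau)\le\FF(V_0)$ inherited from the previous block. This produces a constant $C=C(T)$, independent of $\tau$, controlling $\sup_t|\bar V_\tau(t)|$ (hence $\sup_t|V_\tau(t)|$, since $V_\tau(t)$ is a convex combination of consecutive $V^n_\tau$), the cumulated dissipation $D_{N(\tau)}$, and therefore $\|V'_\tau\|_{L^2(0,T;\Hilb)}$; this gives~\eqref{eq:4.33} and~\eqref{eq:4.35}. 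Finally, for $t\in(t_{n-1},t_n]$ one has $\bar V_\tau(t)-V_\tau(t)=\tfrac{t_n-t}{\tau}(V^n_\tau-V^{n-1}_\tau)$, whence $|\bar V_\tau(t)-V_\tau(t)|\le|V^n_\tau-V^{n-1}_\tau|\le\tau^{1/2}(2D_{N(\tau)})^{1/2}\le C\tau^{1/2}$.

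\emph{Step~3 (confinement in $\Kc c$).} Set $a:=\max\{C,|\FF(V_0)|\}\ge0$. By Step~2 every $V^n_\tau$ satisfies $|V^n_\tau|\le a$ and $\FF(V^n_\tau)\le\FF(V_0)\le a$, so the second assertion of Lemma~\ref{le:Flsc} gives $V^n_\tau\in\Kc c$ for a constant $c=c(a)$ independent of $\tau$; since $\Kc c$ is closed and convex, also $\bar V_\tau(t)=V^n_\tau$ and the convex combinations $V_\tau(t)$ belong to $\Kc c$ for every $t\in[0,T]$.

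\emph{Main difficulty.} The only genuinely delicate point is the uniform bound~\eqref{eq:4.33}: since $\FF$ is neither bounded from below nor $-\eta$-convex, and, when $\theta>0$, only a lower bound of quadratic growth is available, the usual one-step Gronwall absorption fails for large $T$; the block-by-block iteration over subintervals whose length is dictated solely by the coercivity constants (and not by $T$ or $\tau$) is what makes the estimate global in time. Everything else is routine once these bounds and Lemma~\ref{le:Flsc} are in hand.
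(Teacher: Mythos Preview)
Your proposal is correct and follows essentially the same route as the paper: existence via Lemma~\ref{le:Flsc}, the Euler inclusion from minimality, the discrete energy inequality plus the quadratic lower bound~\eqref{eq:107} for the a~priori estimates, and then Lemma~\ref{le:Flsc} together with the convexity of $\Kc c$ for the confinement. The only difference is one of presentation: the paper simply invokes \cite[Prop.~4.6]{rs} for all of part~(2), whereas you unpack the argument explicitly, including the block-by-block absorption needed because the lower bound on $\FF$ is only of (negative) quadratic type; this is exactly the content of that reference, so no genuinely new idea is involved.
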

\begin{proof}
  \textbf{(1)}
  The existence of discrete solutions to the Minimizing Movement
  scheme follows directly from Lemma \ref{le:Flsc}.
  Notice that in our case we did not assume that the sublevels of
  $\FF(V)+\frac1{2\tau_*}|V|^2$ are strongly compact as in
  \cite[Lemma 1.2]{rs}; however Lemma \ref{le:Flsc}
  guarantees the weak lower
  semicontinuity of $\FF$ and the weak compactness of the sublevels
  of $\FF(V)+\frac1{2\tau_*}|V|^2$.

  \eqref{eq:135} is then a simple application of the definition of
  Fr\'echet subdifferential (see e.g.~\cite[(4.29)]{rs}).
 In fact the minimality of $V^n_\tau$ in \eqref{impleuler} yields
  \begin{align*}
    \FF(W)-\FF(V^n_\tau)
    &\ge
      \frac{1}{2\tau}|V^n_\tau-V^{n-1}_\tau|^2-
      \frac{1}{2\tau}|W-V^{n-1}_\tau|^2
      \\&=
      -\frac1\tau\langle V^n_\tau-V^{n-1}_\tau,W-V^{n}_\tau\rangle-
      \frac{1}{2\tau}|W-V^n_\tau|^2;
  \end{align*}
  using \eqref{eq:136}
  with $\omega(Z):= \frac{1}{2\tau}|Z|^2$ we get
  \begin{equation}
    \label{eq:138}
    -\frac{V^n_\tau-V^{n-1}_\tau}\tau\in \partial_F^-\FF(V^n_\tau).
  \end{equation}
  \eqref{eq:135} then follows 
  since for every $1\le n\le N(\tau)$ 
  \begin{equation}
    \label{eq:137}
    V_\tau'(t)=\frac{V^n_\tau-V^{n-1}_\tau}\tau\quad\text{in
    }(t_{n-1},t_n).
  \end{equation}
  \textbf{(2)}
  is a direct application of \cite[Prop.~4.6]{rs}.

  \noindent
  \textbf{(3)} still follows by \eqref{eq:4.33}, \eqref{eq:4.34},
  Lemma \ref{le:Flsc}, and the convexity of $\Kc c$.
\end{proof}
\begin{lemma}[Weak convergence of the Minimizing Movement scheme]
  \label{le:weak-convergence}
  Under the same assumptions of Lemma \ref{le:h1h2h3ok}
  from every vanishing sequence $k\mapsto \tau(k)\downarrow 0$
  it is possible to extract a further subsequence (not relabeled) and
  to find a limit function
  \begin{equation}
    \label{eq:60}
    V\in H^1(0,T;\Hilb),\quad
    \sup_{t\in [0,T]}\FF(V(t))\le \FF(V_0)
  \end{equation}
   such
  that
  \begin{align}
    \label{eq:139}
    \bar V_{\tau(k)}(t)\weakto V(t),\quad
    V_{\tau(k)}(t)\weakto V(t)\quad&\text{weakly in  }\Hilb\text{ for
                                     every }t\in [0,T],\\
    \label{eq:140}
    V_{\tau(k)}'\weakto V'\quad&\text{weakly in }L^2(0,T;\Hilb).
  \end{align}
\end{lemma}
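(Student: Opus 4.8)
The plan is to obtain everything from the uniform bounds of Lemma~\ref{le:h1h2h3ok} by a weak‑compactness argument in $H^1(0,T;\Hilb)$, followed by passage to the limit in the discrete energy inequality.

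First I would use \eqref{eq:4.33} and \eqref{eq:4.35}: the family $\{V_{\tau(k)}\}$ is bounded in $L^\infty(0,T;\Hilb)$ and $\{V_{\tau(k)}'\}$ in $L^2(0,T;\Hilb)$, so $\{V_{\tau(k)}\}$ is bounded in $H^1(0,T;\Hilb)$, and by reflexivity I can pass to a further (not relabeled) subsequence with $V_{\tau(k)}\weakto V$ in $H^1(0,T;\Hilb)$ for some $V\in H^1(0,T;\Hilb)$; this is already \eqref{eq:140}. For each fixed $t$ the evaluation map $W\mapsto W(t)$ is bounded and linear from $H^1(0,T;\Hilb)$ to $\Hilb$, hence weakly continuous, so $V_{\tau(k)}(t)\weakto V(t)$ in $\Hilb$ for every $t\in[0,T]$; since $V_{\tau(k)}(0)=V_0$ for all $k$ this also forces $V(0)=V_0$. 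The estimate $\|\bar V_{\tau}-V_\tau\|_{L^\infty(0,T;\Hilb)}\le C\tau^{1/2}$ of Lemma~\ref{le:h1h2h3ok} then gives $\bar V_{\tau(k)}(t)\weakto V(t)$ for every $t$ as well, which completes \eqref{eq:139}.

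It remains to establish the energy bound $\sup_{t}\FF(V(t))\le\FF(V_0)$. The natural route is to pass to the limit in the discrete energy inequality obtained by telescoping the minimality of the $V^n_\tau$ (cf.~\cite[Prop.~4.6]{rs}), $\FF(\bar V_\tau(t))+\tfrac12\int_0^t|V_\tau'|^2\le\FF(V_0)$. By Lemma~\ref{le:h1h2h3ok}(3) the points $\bar V_{\tau(k)}(t)$ and $V(t)$ all lie in a fixed set $\Kc c$, so I would split $\FF=\HH+\GG$: on $\Kc c$ the term $\HH$ is weakly continuous (Lemma~\ref{le:conteigpot}, Corollary~\ref{cor:compactnessU}), whence $\HH(\bar V_{\tau(k)}(t))\to\HH(V(t))$, while $\int_0^t|V'|^2\le\liminf_k\int_0^t|V_{\tau(k)}'|^2$ by weak lower semicontinuity of the $L^2$‑norm; combined with lower semicontinuity of the confining term $\GG$ this would give $\FF(V(t))+\tfrac12\int_0^t|V'|^2\le\FF(V_0)$.

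The two compactness steps are routine; the delicate point is this last one. Since $\GG$ is only $-\theta$-convex, neither $\GG$ nor $\FF$ need be weakly lower semicontinuous on the bounded sets $\Kc c$, so the weak convergence $\bar V_{\tau(k)}(t)\weakto V(t)$ by itself does not transfer the discrete energy inequality to the limit. To get around this I would rewrite the inequality in terms of $\FF_\eta=\FF+\tfrac\eta2|\cdot|^2$, which is weakly lower semicontinuous by Lemma~\ref{le:Flsc}, and control the quadratic correction $\tfrac\eta2|\bar V_{\tau(k)}(t)|^2$ along the scheme using the characterization \eqref{eq:73} of the $-\theta$-convex part $\partial_F^-\GG$ together with the uniform bound on the $\HH_c$-component of $\partial_L^-\FF$ coming from the decomposition of Lemma~\ref{Elk1}; the inequality also becomes transparent once the strong pointwise convergence $V_{\tau(k)}(t)\to V(t)$ is in hand (Proposition~\ref{primastima}, the next main step), which is precisely what makes the limit curve $V$ a strong Generalized Minimizing Movement in the sense of Definition~\ref{def:GMM}.
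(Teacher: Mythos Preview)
Your compactness argument is correct and matches the paper's approach: the paper's own proof is one line, citing the a~priori estimates of Lemma~\ref{le:h1h2h3ok} and \cite[Prop.~2.2.3]{ags} applied with $\sigma$ the weak topology on $\Hilb$. Extracting a weak $H^1(0,T;\Hilb)$ limit and using weak continuity of the evaluation map is a clean way to obtain \eqref{eq:139}--\eqref{eq:140}.

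You are also right to flag the energy bound in \eqref{eq:60} as the delicate point: since $\GG$ is only $(-\theta)$-convex, neither $\GG$ nor $\FF$ is weakly lower semicontinuous, and \eqref{eq:4.34} does not pass to the limit under weak convergence alone. The paper's brief proof does not address this either. Your resolution via Proposition~\ref{primastima} is the correct one, and there is no circularity: the proof of Proposition~\ref{primastima} uses only \eqref{eq:135}, \eqref{eq:139}, \eqref{eq:140} and the a~priori bounds on the discrete solutions, never the bound on $\FF(V(t))$; once strong pointwise convergence is available, $\FF(V(t))\le\liminf_k\FF(\bar V_{\tau(k)}(t))\le\FF(V_0)$ by strong lower semicontinuity of $\FF$ (and in fact the full energy identity \eqref{eq:128} then follows from \cite[Thm.~3]{rs}). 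By contrast, your intermediate idea of rewriting in terms of $\FF_\eta$ and ``controlling the quadratic correction $\tfrac\eta2|\bar V_{\tau(k)}(t)|^2$'' via \eqref{eq:73} and Lemma~\ref{Elk1} is vague and does not obviously close without essentially reproving the norm convergence of Proposition~\ref{primastima}; I would drop it and simply record that the second half of \eqref{eq:60} is completed after the strong-convergence step.
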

\begin{proof}
  The proof of the weak convergence is a simple application of the a
  priori estimates of Lemma \ref{le:h1h2h3ok}
  (see also \cite[Prop.~2.2.3]{ags}, by choosing as $\sigma$
  the weak topology of $\Hilb$).
\end{proof}
\subsection{Strong convegence of
  the Minimizing Movements scheme}\label{sec:convergence}
This section contains the crucial argument improving Lemma
\ref{le:weak-convergence}, which is based on a compensated compactness strategy.
\begin{proposition}\label{primastima}
  Let $V_k:=V_{\tau(k)}$, $\bar V_k:=\bar V_{\tau(k)}$ be sequences of
  discrete solutions of the Minimizing Movement scheme weakly converging to
  $V$ along a decreasing sequence of step sizes $\tau(k)\downarrow0$
  as in Lemma
  \ref{le:weak-convergence}. 
  Then $V_k,\bar V_k\rightarrow V$ uniformly in $\Hilb$
  so that $V$ is a Generalized Minimizing Movement in $\mathrm{GMM}(\Phi,V_0,T)$.
\end{proposition}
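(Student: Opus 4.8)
The plan is to exploit the splitting $\FF=\GG+\HH$ recorded in Lemma~\ref{Elk1}. Fix the uniform radius $c$ from Lemma~\ref{le:h1h2h3ok}(3), so that $V_k(t),\bar V_k(t)\in\Kc c$ for every $t$ and $k$, pick $c_1>c$, and recall that $\FF=\GG+\HH_{c_1}$ on $\Kc c$. From the discrete Euler inclusion \eqref{eq:135} and Lemma~\ref{Elk1} we decompose, for a.e.\ $t$,
\[
  -V_k'(t)=\xi^k_H(t)+\xi^k_K(t),\qquad
  \xi^k_H(t)=\sum_{j=1}^\J \gamma^k_j(t)\,\eta^k_j(t)\in\subH_{c_1}(\bar V_k(t)),\qquad
  \xi^k_K(t)\in\partial^-_F\GG(\bar V_k(t)),
\]
with $\gamma^k_j(t)=\partial_j\psi_{c_1}(\ssigma_{c_1}(\bar V_k(t)))$ and $\eta^k_j(t)\in\partial^+\sigma_{j,c_1}(\bar V_k(t))$. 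Since $\HH_{c_1}$ is globally Lipschitz, $(\xi^k_H)$ is bounded in $L^\infty(0,T;\Hilb)$, hence $(\xi^k_K)$ is bounded in $L^2(0,T;\Hilb)$; passing to a subsequence, $\xi^k_H\weakto\xi_H$ and $\xi^k_K\weakto\xi_K$ weakly in $L^2(0,T;\Hilb)$ and $-V'=\xi_H+\xi_K$.

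\textbf{The $\HH$ part (the compensated compactness).} I claim that $\int_0^t\langle\xi^k_H(s),\bar V_k(s)-V(s)\rangle\,\d s\to0$ for every $t$. Each $\sigma_{j,c_1}$ is concave and Lipschitz (Lemma~\ref{le:Lip}), so we may pick $\tilde\eta_j(s)\in\partial^+\sigma_{j,c_1}(V(s))$ and, from $\eta^k_j(s)\in\partial^+\sigma_{j,c_1}(\bar V_k(s))$ and concavity of $\sigma_{j,c_1}$ at $\bar V_k(s)$ and at $V(s)$, obtain the two-sided bound
\[
  \sigma_{j,c_1}(\bar V_k(s))-\sigma_{j,c_1}(V(s))\ \le\ \langle\eta^k_j(s),\bar V_k(s)-V(s)\rangle\ \le\ \langle\tilde\eta_j(s),\bar V_k(s)-V(s)\rangle .
\]
Integrating over $(0,t)$: the right-hand term tends to $0$ (fixed $L^2(0,T;\Hilb)$ function tested against $\bar V_k-V\weakto0$), while the left-hand term tends to $0$ because $\sigma_{j,c_1}=\sigma_j$ is weakly continuous on $\Kc c$ (Lemma~\ref{le:Lip}) and the integrand is bounded; hence $\int_0^t\langle\eta^k_j(s),\bar V_k(s)-V(s)\rangle\,\d s\to0$. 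Since $\gamma^k_j(s)\to\partial_j\psi_{c_1}(\ssigma_{c_1}(V(s)))$ pointwise and boundedly ($\psi_{c_1}\in\rmC^1$), splitting the limiting coefficient into positive and negative parts and repeating the squeeze gives $\int_0^t\langle\xi^k_H(s),\bar V_k(s)-V(s)\rangle\,\d s\to0$, and therefore $\int_0^t\langle\xi^k_H,\bar V_k\rangle\,\d s\to\int_0^t\langle\xi_H,V\rangle\,\d s$ for every $t$.

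\textbf{The $\GG$ part and conclusion.} Take $\theta=0$ first, so $\GG$ is convex and $\partial^-_F\GG$ is maximal monotone. A telescoping identity for the piecewise linear interpolant, together with $\|\bar V_k-V_k\|_{L^\infty(0,T;\Hilb)}\le C\tau(k)^{1/2}$, the bound $\|V_k'\|_{L^2}\le C$ and $V_k(0)=V_0$, gives $\int_0^T\langle -V_k',\bar V_k\rangle\,\d t=-\tfrac12|\bar V_k(T)|^2+\tfrac12|V_0|^2+o(1)$; combining with the previous step and $\xi^k_K=-V_k'-\xi^k_H$, weak lower semicontinuity of the norm ($V_k(T)\weakto V(T)$) and $\int\langle V_k',V\rangle\to\int\langle V',V\rangle$, one gets $\limsup_k\int_0^T\langle\xi^k_K,\bar V_k\rangle\,\d t\le\int_0^T\langle\xi_K,V\rangle\,\d t$. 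By Minty's lemma this forces $\xi_K(t)\in\partial^-_F\GG(V(t))$ a.e.\ and $\int_0^T\langle\xi^k_K,\bar V_k-V\rangle\,\d t\to0$; since convexity of $\GG$ gives $\liminf_k\int_J\langle\xi^k_K,\bar V_k-V\rangle\,\d s\ge0$ on every subinterval $J$ (Fatou and weak lower semicontinuity of $\GG$), splitting $(0,T)=(0,t)\cup(t,T)$ yields $\int_0^t\langle\xi^k_K,\bar V_k\rangle\,\d s\to\int_0^t\langle\xi_K,V\rangle\,\d s$ for every $t$. Feeding Steps into $\tfrac12|V_k(t)|^2=\tfrac12|V_0|^2-\int_0^t\langle\xi^k_H+\xi^k_K,\bar V_k\rangle\,\d s+o(1)$ we obtain $\tfrac12|V_k(t)|^2\to\tfrac12|V_0|^2+\int_0^t\langle -\xi_H-\xi_K,V\rangle\,\d s=\tfrac12|V(t)|^2$ for every $t$; with $V_k(t)\weakto V(t)$ this gives $V_k(t)\to V(t)$ strongly in $\Hilb$ for every $t$, and then $\bar V_k(t)\to V(t)$ strongly as well, because $\|\bar V_k-V_k\|_{L^\infty(0,T;\Hilb)}\to0$. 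Hence $V\in\mathrm{GMM}(\Phi,V_0,T)$ by Definition~\ref{def:GMM}.

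\textbf{Main obstacle.} The whole point is that $\FF$ has no compact sublevels, so one cannot simply extract a strongly convergent subsequence of $\bar V_k(t)$ from its boundedness; the argument is forced to go through the two mechanisms that do survive, namely the concavity \emph{and} weak continuity of the $\sigma_j$'s on $\Kc c$ (which make the $\HH$–contribution vanish, using the compactness built in Proposition~\ref{prop:compactness} only implicitly, through Lemmas~\ref{le:1}--\ref{le:Lip}) and the monotonicity of the convex part together with the telescoping of $\langle V_k',\bar V_k\rangle$. When $\theta>0$ the last step must be run for the convex functional $\GG_\theta$ and the shifted selection $\xi^k_K(t)+\theta\bar V_k(t)\in\partial\GG_\theta(\bar V_k(t))$; an extra quadratic term $\theta\,\|\bar V_k-V\|^2$ then appears on the "wrong" side and has to be absorbed by a more careful simultaneous bookkeeping, exploiting the sharp discrete energy estimate $\FF(\bar V_\tau(t_n))+\tfrac12\int_0^{t_n}|V_\tau'|^2\,\d s\le\FF(V_0)$ (valid by minimality alone) — this is the delicate point of the proof.
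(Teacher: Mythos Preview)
Your squeeze argument in the $\HH$ part is the crucial step, and it does not work as written. For a concave function $\sigma_{j,c_1}$ with $\eta^k_j\in\partial^+\sigma_{j,c_1}(\bar V_k)$ and $\tilde\eta_j\in\partial^+\sigma_{j,c_1}(V)$, concavity gives
\[
  \langle\eta^k_j,\bar V_k-V\rangle\ \le\ \sigma_{j,c_1}(\bar V_k)-\sigma_{j,c_1}(V)\ \le\ \langle\tilde\eta_j,\bar V_k-V\rangle,
\]
i.e.\ the \emph{opposite} of your displayed chain: the quantity being squeezed is $\sigma_{j,c_1}(\bar V_k)-\sigma_{j,c_1}(V)$, not $\langle\eta^k_j,\bar V_k-V\rangle$. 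From concavity and weak continuity alone you only obtain $\limsup_k\int_0^t\langle\eta^k_j,\bar V_k-V\rangle\,\d s\le 0$; there is no matching lower bound, and without it the product $\langle\xi^k_H,\bar V_k\rangle$ remains a genuine weak--weak pairing that you cannot pass to the limit in. This is exactly the obstacle you identify in your last paragraph, and your argument does not overcome it.

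The paper fills this gap with a different mechanism: it uses the \emph{strong} compactness of the range of $\xi^k_H$ (values in a fixed compact set $\mathcal C\subset\Hilb$, from Proposition~\ref{prop:compactness}(1)) to apply Ascoli--Arzel\`a to the primitive $\mathcal A_k(t)=\int_0^t\mathrm e^{-2\theta s}\xi^k_H(s)\,\d s$, obtaining $\mathcal A_k\to\mathcal A$ uniformly in $\rmC^0([0,T];\Hilb)$. An integration by parts then trades the bad product $\int\langle\xi^k_H,V_k\rangle$ for $\int\langle\mathcal A_k,V_k'\rangle$ plus boundary terms, and the pairing (strong $\mathcal A_k$)$\times$(weak $V_k'$) passes to the limit. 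This is the compensated--compactness step; your argument uses Proposition~\ref{prop:compactness} only for boundedness, not for the pointwise compactness that is actually needed. On a secondary note, your treatment of $\theta>0$ is only sketched; the paper handles it cleanly by carrying the weight $\mathrm e^{-2\theta t}$ throughout, so that the decomposition reads $-V_k'=A_k+B_k-\theta\bar V_k$ with $B_k\in\partial^-\GG_\theta(\bar V_k)$ and the identity $\frac{\d}{\d t}\big(\tfrac12\mathrm e^{-2\theta t}|V_k|^2\big)=\mathrm e^{-2\theta t}\langle V_k'-\theta V_k,V_k\rangle$ absorbs the extra term without any residual quadratic error.
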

\begin{proof}
First of all we note that $V_k,\bar V_k$ satisfy the differential
inclusion
\begin{equation}\label{diffinclappr}
 V_k'(t)\in -\partial ^-_F\FF(\bar V_k(t)) \quad \text{for a.e.~}t\in (0,T)
\end{equation}
as in \eqref{eq:135}, the apriori estimates of Lemma
\ref{le:h1h2h3ok}, and the weak convergences \eqref{eq:139} and \eqref{eq:140}.
By Lemma \ref{Elk1} we can decompose $-V_k'(t)$ as the sum of two
piecewise constant terms 
\begin{equation}
  \label{eq:119}
  -V_k'(t)=A_k(t)+B_k(t)
  -\theta \bar V_k(t),\quad
  A_k(t)\in \subH(\bar V_k(t)),\quad
  B_k(t)\in \partial_F^-\GG_\theta(\bar V_k(t)),
\end{equation}
where $S(\bar V_k(t))$ was defined in~\eqref{eq:93b} and~\eqref{eq:108}.
For the sake of clarity, we now divide the proof in several steps.

{\bf Step 1: compactness of $A_k$.}
Thanks to Proposition \ref{prop:compactness}
the image of $A_k$ is contained in a compact set $\mathcal C\subset
\Hilb$ independent of $k$.
For late use, we will introduce 
\[
  \mathcal C_{0,t}:=t\, \cconv(\mathcal C\cup \{0\})
  =\big\{tx:x\in \cconv(\mathcal C\cup \{0\})\big\},\quad t\ge 0.
\]
By~\cite[Theorem~3.25]{rudin}, we deduce that $\mathcal C_{0,t}$ is
a family of 
compact sets in $\Hilb$, which by definition are also a convex,
contain the origin, and satisfies $\mathcal C_{0,t}\subset \mathcal
C_{0,T}$
for every $t\in [0,T]$ since
$\mathcal C_{0,t}=\frac tT \mathcal C_{0,T}$ and  $\mathcal C_{0,T}$
is a convex set containing $0$.

As a consequence, $\int_0^T|A_k(t)|^2\,dt$ is uniformly bounded and, up to pass to subsequences, \[
A_k\rightharpoonup A,\qquad \text{weakly in $L^2(0,T;\Hilb)$ as $k\to+\infty$}.
\]

{\bf Step 2: a limsup inequality.}
At this point we only have that, for all $t\in[0,T]$, \[
  B_k
  \rightharpoonup B:=-V'-A+\theta V
  \quad \text{weakly in $L^2(0,T;\Hilb)$ as $k\to+\infty$}, 
\]
We want now to prove
\begin{equation}\label{firsteqprimastima}
  \limsup_{k\to\infty}\int_0^T
  \mathrm e^{-2\theta t}\langle B_k(t),V_k(t)\rangle\,\d t
  \leq \int_0^T \mathrm e^{-2\theta t}\langle B(t), V(t)\rangle\,\d t.
\end{equation}
We first introduce the perturbation $C_k:=B_k+\theta(V_k-\bar V_k)=
-V_k'-A_k+\theta V_k$ ;
since
$\sup_{t\in [0,T]}|\bar V_k(t)-V(t)|\to 0$ as $k\to\infty$,
\eqref{firsteqprimastima} is equivalent to
\begin{equation}\label{eq:f}
  \limsup_{k\to\infty}\int_0^T
  \mathrm e^{-2\theta t}\langle C_k(t),V_k(t)\rangle\,\d t
  \leq \int_0^T \mathrm e^{-2\theta t}\langle B(t), V(t)\rangle\,\d t.
\end{equation}
By definition of $C_k$ we have
\begin{align}
  \notag
  \int_0^T\mathrm e^{-2\theta t}\langle C_k(t),V_k(t)\rangle \,\d
  t&=-\int_0^T
  \mathrm e^{-2\theta t}\langle V_k'(t)
  -\theta  V_k,V_k(t) \rangle \,\d t
  -\int_0^T\mathrm e^{-2\theta t}
     \langle A_k(t),V_k(t) \rangle \,\d t\\
  \notag
  &=-\int_0^T
  \frac\d{\d t}\Big(\frac 12\mathrm e^{-2\theta t}|V_k(t)|^2\Big)\,\d t
  -\int_0^T\mathrm e^{-2\theta t}
    \langle A_k(t),V_k(t) \rangle \,\d t\\
  \label{eq:124}
  &=\frac12|V_0|^2-\frac12
  \mathrm e^{-2\theta T}|V_k(T)|^2-\int_0^T
  \mathrm e^{-2\theta t}\langle A_k(t),V_k(t) \rangle \,\d t,
\end{align}
and a similar calculation holds for $B$:
\begin{equation}
  \label{eq:125}
  \int_0^T\mathrm e^{-2\theta t}\langle B(t),V(t)\rangle \,\d
  t
  =\frac12|V_0|^2-\frac12
  \mathrm e^{-2\theta T}|V(T)|^2-\int_0^T
  \mathrm e^{-2\theta t}\langle A(t),V(t) \rangle \,\d t.
\end{equation}
The lower semicontinuity of the norm with respect to the weak convergence yields\[
\limsup_{k\to+\infty} -|V_k(T)|^2 \leq -|V(T)|^2.
\]
Hence
\eqref{firsteqprimastima}
will follow if we to prove the convergence
\begin{equation}
  \label{eq:120}
  \lim_{k\to\infty}\int_0^T
  \mathrm e^{-2\theta t}\langle A_k(t),V_k(t) \rangle \,\d t=
  \int_0^T
  \mathrm e^{-2\theta t}\langle A(t),V(t) \rangle \,\d t.
\end{equation}
We use a compensated-compactness argument and we introduce
the integral function
$$\mathcal A_k(t):=\int_0^t\mathrm e^{-2\theta s} A_k(s)\,\d s
\quad t\in [0,T].$$
Since for every $k\in \N$ 
the sequence $\mathcal A_k'(t)=\mathrm e^{-2
  \theta t} A_k(t)$
takes values
in the compact subset $\CC\subset \Hilb$ and thus is uniformly bounded,
we deduce that $\mathcal A_k$ is uniformly Lipschitz
equicontinuous.

It is also easy to show that $\mathcal A_k(t)\in \mathcal C_{0,T}$
for every $k\in \N$ and every $t\in [0,T]$, since
by Jensen inequality
\begin{displaymath}
  t^{-1}\mathcal A_k(t)=\mean{0}^{t}\mathrm e^{-2\theta t}
  A_k(t)\,\d t\in \cconv\big(\mathcal C\cup \{0\}\big)
  \quad\text{for every }t\in (0,T].
\end{displaymath}
All in all, by Ascoli-Arzel\`a Theorem, we deduce that $(\mathcal
A_k)_k$ is relatively compact in $C^0([0,T]; \Hilb)$, and therefore 
\begin{equation}\label{eq:strongL2A}
  \mathcal A_k\rightarrow \mathcal A
  \text{ uniformly and in $L^2(0,T;\Hilb)$ as $k\to+\infty$},
\end{equation}
where $\displaystyle \mathcal A(t):=
  \int_0^t\mathrm e^{-2\theta s} A(s)\,\d s$.
  An integration by parts then gives
  \begin{equation}
  \int_0^T\mathrm e^{-2\theta t}\langle A_k(t),V_k(t) \rangle \,\d t
  =-\int_0^T\langle \mathcal A_k(t),V_k'(t)\rangle \,\d t+
  \langle \mathcal A_k(T), V_k(T)\rangle-\langle \mathcal A_k(0), V_k(0)\rangle
\label{eq:121}
\end{equation}
with a similar identity involving $A,V$, and $\mathcal A$.
Then we combine~\eqref{eq:strongL2A},~\eqref{eq:139}
and \eqref{eq:140} to infer
\[
\langle \mathcal A_k(T), V_k(T)\rangle\to \langle \mathcal A(T), V(T)\rangle,\qquad\langle \mathcal A_k(0), V_k(0)\rangle\to\langle \mathcal A(0), V(0)\rangle
\]
and
\[
\lim_{k\to+\infty}\int_0^T\langle \mathcal A_k(t),V_k'(t)
\rangle\,dt=\int_0^T\langle\mathcal A(t),V'(t) \rangle\,\d t;
\]
we can then pass to the limit in \eqref{eq:121}
and we get \eqref{eq:120}.

{\bf Step 3: for a.e. $t\in (0,T)$ we have $B(t)\in
  \partial^-\GG_\theta(V(t))$.}
Introducing the integral functional
\begin{equation}
  \label{eq:122}
  \widetilde{\mathscr K}_\theta(V):=
  \int_0^T \mathrm e^{-2\theta t}\GG_\theta(V)\,\d t
\end{equation}
in the Hilbert space $\widetilde\Hilb:=L^2((0,T),\mu_\theta,\Hilb)$
associated with the Borel measure $\mu_\theta:=\mathrm e^{-2\theta
  t}\mathcal L^1$ in $(0,T)$,
since $V\in D(\widetilde\GG_\theta)$ and $B\in \widetilde\Hilb$,
we can equivalently prove that
for all $W\in D(\widetilde\GG_\theta)\subset \widetilde\Hilb$
\begin{equation}
  \int_0^T\mathrm e^{-2\theta t}\langle B(t),W(t)-V(t)\rangle\,\d t
  \leq \int_0^T \mathrm e^{-2\theta
    t}\GG_\theta(W(t))\,dt-\int_0^T\mathrm e^{-2\theta t}\GG_\theta(V(t))\,\d
  t.\label{eq:123}
\end{equation}
Since $B_k(t)\in \partial^-\GG_\theta(\bar V_k(t))$
we have 
\[
  \int_0^T\mathrm e^{-2\theta t}\langle B_k(t),W(t)-\bar V_k(t)\rangle\,\d t
  \leq
  \int_0^T \mathrm e^{-2\theta t}\GG_\theta(W(t))\,\d t-\int_0^T\GG_\theta(\bar
  V_k(t))\,\d t.
\]
Then it is sufficient to use Step~2, the weak lower semicontinuity of
$\widetilde\GG_\theta$ in $\widetilde\Hilb$
(since it is strongly lower semicontinuous and $(-\theta)$-convex),
the weak convergence of $B_k$, and the strong convergence of
$\bar V_k-V_k$ to $0$ in $\widetilde\Hilb$
to obtain \[
\begin{split}
  \int_0^T
  \mathrm e^{-2\theta t}\langle B(t),W(t)-V(t)\rangle\,\d t
  &\leq \liminf_{k\to\infty}
  \int_0^T
  \mathrm e^{-2\theta t}\langle B_k(t),W(t)-V_k(t)\rangle\,\d t\\
  & =\liminf_{k\to\infty}
  \int_0^T \mathrm e^{-2\theta t}\langle B_k(t),W(t)-\bar V_k(t)\rangle\,\d t\\
  &\leq \int_0^T \mathrm e^{-2\theta t}\GG_\theta(W(t))\,\d t+
  \liminf_{k\to\infty}\Big(-\int_0^T \mathrm e^{-2\theta t}
  \GG_\theta(\bar V_k(t))\,\d t\Big)\\
  &\leq \int_0^T \mathrm e^{-2\theta t}\GG_\theta(W(t))\,\d t-\int_0^T
  \mathrm e^{-2\theta t}\GG_\theta(V(t))\,\d t,
\end{split}
\]
which means $B(t)\in \partial^-\GG_\theta(V(t))$ for a.e. $t$.

{\bf Step 4: $V_k\rightarrow V$ uniformly in $\rmC^0([0,T];\Hilb)$.}
By the equicontinuity estimate
and the weak convergence \eqref{eq:140},
it is sufficient to prove that for all $S\in (0,T]$ 
\begin{equation}\label{convatT}
  \limsup_{k\rightarrow +\infty}|V_k(S)|^2=|V(S)|^2.
\end{equation}
Using the identities \eqref{eq:124} and \eqref{eq:125}
written in the interval $[0,S]$
and taking into account of \eqref{eq:120},
\eqref{convatT} is equivalent to
\begin{equation}\label{eq:liminf1}
  \limsup_{k\to\infty}-
  \int_{0}^S\mathrm e^{-2\theta t}\langle C_k(t),V_k(t)\rangle\,\d t
  \le -\int_0^S\langle \mathrm e^{-2\theta t}B(t), V(t)\rangle\,\d t.
\end{equation}
Recalling that $C_k=B_k+\theta(V_k-\bar V_k)$
and $V_k-\bar V_k\to 0$ uniformly in $\Hilb$,
\eqref{eq:liminf1} can be reduced to
\begin{equation}\label{eq:liminf}
  \liminf_{k\to\infty}\int_{0}^T
  \mathrm e^{-2\theta t}\langle B_k(t),\bar V_k(t)\rangle\,\d t
  \ge \int_0^T \mathrm e^{-2\theta t}\langle B(t), V(t)\rangle\,\d t.
\end{equation}
Since by step 3 we have $B(t)\in \GG_\theta(V(t))$ a.e., the
monotonicity property of the subdifferential of a convex function
yields
\begin{equation}
  \langle B_k(t)-B(t),\bar V_k(t)-V(t)\rangle \ge 
  0\quad\text{for a.e.~$t\in (0,T)$}\label{eq:126}
\end{equation}
so that
\[
\begin{split}
  \langle B_k(t),\bar V_k(t)\rangle-\langle B(t),V(t)\rangle
  &=\langle B_k(t)-B(t), \bar V_k(t)-V(t)\rangle+
  \langle B(t),\bar V_k(t)\rangle-\langle B_k(t),V(t)\rangle
  \\&\geq \langle B(t),\bar V_k(t)\rangle
  -\langle B_k(t),V(t)\rangle
\end{split}
\]
and therefore
\[
\begin{split}
  \liminf_{k\to\infty}\int_0^T&\mathrm e^{-2\theta t}\Big(\langle
B_k(t),V_k(t)\rangle-\langle B(t),V(t)\rangle\Big)\,\d t\\
&\geq \liminf_{k\to\infty}
\int_{0}^T \mathrm e^{-2\theta t}\langle B(t),V_k(t)\rangle\,\d t-
\int_{0}^T \mathrm e^{-2\theta t}\langle
B_k(t),V(t)\rangle\,\d t=0
\end{split}
\]
where we used the weak convergence of $B_k$ to $B$ and of $V_k$ to
$V$.
\end{proof}

\subsection{Proof of the main results of Section \ref{sec:main}}
\label{subsec:proofs}
We can now collect all the information on the convergence of the
Minimizing Movement scheme to conclude the proofs of the main results
of
Section \ref{sec:main}.

\subsubsection*{\bfseries Proof of Theorem \ref{thm:main1}}
The fact that $\mathrm{GMM}(\Phi,V_0,T)$ is not empty
just follows from Proposition \ref{primastima}.
We can then apply \cite[Theorem 3]{rs} which shows that
every element $V\in \mathrm{GMM}(\Phi,V_0,T)$ satisfies \eqref{eq:127ter}, 
\eqref{eq:127bis}, \eqref{eq:128} and \eqref{eq:130}
if $\FF$ satisfies the Chain rule property we proved in Proposition
\ref{prop:chain}.
In fact the compactness assumption in
\cite[Theorem 3]{rs} was just needed to guarantee the existence of an
element in $\mathrm{GMM}(\Phi,V_0,T)$ but the proof of the
characterization of the limiting subdifferential equation is independent of
such an assumption.

\subsubsection*{\bfseries Proof of Proposition \ref{prop:better}}
It is sufficent to combine Theorem \ref{thm:main1} with Corollary
\ref{cor:vitafacile}.

\subsubsection*{\bfseries Proof of Theorem \ref{thm:main2}}
It just follows by Theorem \ref{thm:main1} and \eqref{eq:59bis} of Lemma \ref{ELk}.

\appendix

\section{Convergence of eigenvalues and eigenfunctions for
  Schr\"odinger potentials}\label{sec:appmosco}

In order to study the behaviour of the eigenvalues of $\E_V$ with
respect to $V$
we will use Mosco convergence in $\Hilb$.
Recall that a sequence
of functionals $\Phi_n:\Hilb\to \R\cup\{+\infty\}$ converges in the sense of Mosco to a limit functional $\Phi:\Hilb\to\R\cup\{+\infty\}$ if
the following two conditions hold:
\begin{enumerate}[(M1)]
\item for every sequence $(w_n)_{n\in \N}\subset  \Hilb$ weakly converging to $w\in
  \Hilb$ we have $\liminf_{n\to\infty}\Phi_n(w_n)\ge \Phi(w)$;
\item for every $v\in \Hilb$ there exists a sequence $(w_n)_{n\in \N}$
  strongly converging to $w$ such that
  $\lim_{n\to\infty}\Phi_n(w_n)=\Phi(w)$.
\end{enumerate}
Mosco convergence is equivalent to $\Gamma$-convergence with respect
to the weak and strong $\Hilb$-topology, see
\cite[Chapters 12,13]{DalMaso}.
Under equi-coercivity (guaranteed in our case
by the compactness of the imbedding of $\V$ in $\Hilb$),
weak and strong $\Gamma$-convergence are equivalent and are also
related to uniform convergence of the resolvents.

We split the proof of Lemma~\ref{le:conteigpot} in two parts: first we prove that the weak convergence of potentials implies the Mosco convergence of the associated functionals, and then show that the Mosco convergence implies the convergence of eigenvalues and eigenfunctions.
\begin{lemma}\label{le:conteigapp}
Let $V_n\in \K$, $n\in \N$, be a sequence weakly converging in
$\Hilb$ to $V\in\K$ as $n\to+\infty$.
Then the corresponding sequence of quadratic forms $\E_{V_n}$
converges
in the sense of Mosco to $\E_V$.
\end{lemma}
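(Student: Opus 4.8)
The plan is to verify the two conditions (M1) and (M2) directly. The crucial elementary fact to exploit is that, although $V_n$ only converges weakly in $\Hilb=L^2(\sfD,\mm)$, every admissible potential lies above the fixed constant $\Vmin$; hence the positive parts $(V_n)_+ = V_n - \Vmin + (\Vmin)_- \cdot(\text{sign adjustment})$ are controlled and, more importantly, the quadratic terms $\int_\sfD V_n u^2\,\d\mm$ can be split into a weakly-continuous linear-in-$V_n$ piece tested against the \emph{fixed} function $u^2\in L^1\cap L^2$ and a nonnegative remainder. Throughout I would write $V_n = \Vmin + W_n$ with $W_n\ge 0$ $\mm$-a.e., so that $W_n\weakto W:=V-\Vmin\ge0$ in $\Hilb$, and record that $\E_{V_n}(u) = \E(u) + \Vmin|u|^2 + \int_\sfD W_n u^2\,\d\mm$.

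For the $\liminf$ inequality (M1), I would take $w_n\weakto w$ in $\Hilb$. If $\liminf_n \E_{V_n}(w_n)=+\infty$ there is nothing to prove, so passing to a subsequence I may assume $\sup_n \E_{V_n}(w_n)<+\infty$; by the coercivity estimate \eqref{eq:8} (which gives $\E(w_n)\ge \alpha|w_n|^2$) together with the equivalence of norms, $(w_n)$ is bounded in $\V$, hence — using the compact embedding \eqref{eq:7} — $w_n\to w$ strongly in $\Hilb$ and weakly in $\V$ along a further subsequence, so $w\in\V$. Then $\E$ is a continuous nonnegative (hence weakly lower semicontinuous) quadratic form on $\V$, giving $\liminf_n \E(w_n)\ge \E(w)$, while $\Vmin|w_n|^2\to\Vmin|w|^2$. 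For the remaining term $\int_\sfD W_n w_n^2\,\d\mm$ with $W_n\ge0$: I would use that $w_n\to w$ strongly in $\Hilb$ and (if needed, invoking \eqref{ass:main}-type boundedness only for the final refined convergence, but here just Fatou on a.e.-convergent subsequences) apply Fatou's lemma after extracting an $\mm$-a.e.\ convergent subsequence of $w_n^2$, combined with the weak convergence $W_n\weakto W$, to conclude $\liminf_n\int_\sfD W_n w_n^2\,\d\mm \ge \int_\sfD W w^2\,\d\mm$. (The clean way: for every $k$, $\int W_n\min(w_n^2,k)\,\d\mm \to \int W\min(w^2,k)\,\d\mm$ because $\min(w_n^2,k)\to\min(w^2,k)$ strongly in $\Hilb$ and $W_n\weakto W$; then let $k\to\infty$.) Adding the three pieces yields $\liminf_n\E_{V_n}(w_n)\ge \E_V(w)$, and if $w\notin D(\E_V)$ the same computation forces the $\liminf$ to be $+\infty$.

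For the limsup/recovery sequence (M2), given $v\in\Hilb$: if $v\notin D(\E_V)$ take $w_n=v$ and note $\E_{V_n}(v)\to+\infty$ is not required — rather any sequence works since $\Phi(w)=+\infty$; so I simply pick $w_n\equiv v$. If $v\in D(\E_V)\subset\V$, I would also take the \emph{constant} sequence $w_n\equiv v$. Then $\E(w_n)+\Vmin|w_n|^2$ is constant and equal to its limit, and $\int_\sfD W_n v^2\,\d\mm\to\int_\sfD W v^2\,\d\mm$ \emph{precisely} because $v^2\in L^2(\sfD,\mm)$ (indeed $v\in\V_4\supset$ the eigenfunctions, or more simply $v\in D(\E_V)$ ensures $\int W v^2<\infty$; but to guarantee $v^2\in L^2$ one restricts to the dense subspace $\V_4=\V\cap L^4$ of $D(\E_V)$ from \eqref{eq:58} and uses a diagonal argument) and $W_n\weakto W$ in $\Hilb$. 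Hence $\E_{V_n}(w_n)\to\E_V(v)$.

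\textbf{Main obstacle.} The delicate point is exactly the recovery sequence when $v\in D(\E_V)\setminus \V_4$, i.e.\ when $v^2\notin L^2(\sfD,\mm)$: then testing the weakly convergent $W_n$ against $v^2$ is not licit, and a constant recovery sequence need not give convergence of $\int W_n v^2\,\d\mm$. I would handle this by the density \eqref{eq:58} of $\V_4$ in $D(\E_V)$ (in the form-norm $\E_V(\cdot)+|\cdot|^2$): approximate $v$ by $v^{(j)}\in\V_4$ with $\E_V(v^{(j)})\to\E_V(v)$, build for each $j$ the (trivial, constant) recovery sequence, and extract a diagonal sequence $w_n:=v^{(j(n))}$ with $j(n)\to\infty$ slowly enough that $w_n\to v$ strongly in $\Hilb$ and $\E_{V_n}(w_n)\to\E_V(v)$; the uniform-in-$n$ control needed for the diagonalization comes from the global Lipschitz/linear bound $|\int W_n (f^2-g^2)\,\d\mm|\le \|W_n\|_{\Hilb}\,\|f^2-g^2\|_{\Hilb}$ valid on $\V_4$ together with the uniform bound $\sup_n\|V_n\|_\Hilb<\infty$ (weak convergence implies boundedness). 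This is the only step requiring care; the rest is the routine bookkeeping sketched above.
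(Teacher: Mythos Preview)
Your proposal is correct and follows essentially the same route as the paper. For (M1) both arguments exploit $V_n\ge\Vmin$ to extract coercivity, use the compact embedding $\V\hookrightarrow\Hilb$ to upgrade to strong $L^2$-convergence of $w_n$, and handle the nonnegative remainder $\int(V_n-\Vmin)w_n^2\,\d\mm$ by truncation (the paper truncates $|w_n|$ at level $k$, you truncate $w_n^2$---either way one is left pairing a weakly convergent $L^2$-sequence against a strongly convergent bounded one); for (M2) both take the constant recovery sequence on $\V_4$ and then diagonalize, the paper via the explicit eigenfunction expansion of $w$ and you via the density statement \eqref{eq:58}, which is itself proved by that same expansion.
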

\begin{proof}
We start from the condition (M1) and consider a sequence
$w_n$ weakly converging to $w$ in $\Hilb$ such that $\E_{V_n}(w_n)\le
E$ definitely. In particular $\E(w_n)$ is uniformly bounded from
above, so that $w_n$ is converging strongly to $w$ in $\Hilb$ and
\begin{equation}
  \label{eq:16}
  \liminf_{n\to+\infty}\E(w_n)+V_{\rm min} \int_\sfD |w_n|^2\,\d\mm\ge
  \E(w)+V_{\rm min}\int_\sfD |w|^2\,\d\mm.\end{equation}
On the other hand, for every $k>0$, $w_n\land k$ converges strongly to
$w\land k$ in $L^4(\sfD,\mm)$ so that 
\begin{align*}
  \liminf_{n\to+\infty}\int_\sfD (V_n-V_{\rm min})|w_n|^2\,\d\mm
  \ge \liminf_{n\to+\infty}\int_\sfD (V_n-V_{\rm min})|w_n\land k|^2\,\d\mm=
  \int_\sfD (V-V_{\rm min})|w\land k|^2\,\d\mm.
\end{align*}
Since $k>0$ is arbitrary we conclude that
\begin{equation}
  \label{eq:14}
  \liminf_{n\to+\infty}\int_\sfD (V_n-V_{\rm min})|w_n|^2\,\d\mm\ge
  \int_\sfD (V-V_{\rm min})|w|^2\,\d\mm.
\end{equation}
Combining \eqref{eq:16} and \eqref{eq:14} we obtain $\E_V(w)\le E$ as
well.

Concerning (M2), we first show that 
for every $w\in D(\E_V)$
there exists a sequence $(w_k)_{k\in \N}$ in $D(\E_V)\cap
L^4(\sfD,\mm)$ converging strongly to $w$ in $\Hilb$ such that
$\E_V(w_k)\to \E_V(w)$ as $k\to+\infty$.
It is sufficient to consider an orthonormal basis of eigenfunctions
$(u_h)_{h\in \N}$ for $\E_V$ and set
\begin{equation}
  \label{eq:17}
  w_k:=\sum_{h=1}^k \langle w,u_h\rangle u_h
\end{equation}
On the other hand,
for every $k\in \N$ we have
\begin{equation}
  \label{eq:18}
  \lim_{n\to+\infty}\E_{V_n}(w_k)=\E_V(w_k)
\end{equation}
so that a standard diagonal argument yields (M2).
\end{proof}

Now we provide the proof of some well-known facts concerning the Mosco convergence and the convergence of eigenvalues.

\begin{definition}\label{def:resolvent}
  For all $\beta>\lambda_{\rm min}$ and $V\in \K$,
  the resolvent operator $\rmR_V^\beta \colon \Hilb\to \Hilb$
  maps every
  $f\in \Hilb$ into the the unique solution $u$ of the problem \[
    \E(u,w)+\int_{\sfD}(V+\beta)u\,w\,\d\mm=\int_{\sfD}f w\,\d\mm
    \qquad \text{for all }w\in D(\E_V).
\]
$\rmR_V^\beta f$ is the unique minimizer of the functional \[
  v\mapsto \frac 12\E(v)+\frac 12
  \int_{\sfD}(V+\beta)v^2\,\d\mm-\int_\sfD fv\,\d\mm
\]
\end{definition}
We list here some properties of the resolvent operator:
\begin{itemize}
\item The operator $\rmR_{V}^\beta$ is continuous.
\item The operator $\rmR_{V}^\beta $ is compact, thanks to the compact embedding of $\V$ into $\Hilb$.
\item The operator $\rmR_{V}^\beta $ is self-adjoint.
\item The operator $\rmR_{V}^\beta $ is positive.
\end{itemize}
As a consequence, the spectrum of $\rmR_{V}^\beta $ is real, positive and discrete and it is made of eigenvalues ordered as \[
  0\leq \dots \leq \Lambda_k(\beta,V)\leq \dots \leq \Lambda_1(\beta,V)=
  \|R_{V}^\beta \|_{\mathcal L(\Hilb)},
\]
which are related to the sequence $\lambda_k(V)$ by the formula
\begin{equation}
  \label{eq:144}
  \Lambda_k(V)=(\lambda_k(V)+\beta)^{-1},\quad
  \text{$u$ is a $(V,\lambda_k(V))$-eigenfunction if and only if }
  \rmR_V^\beta u=(\lambda_k(V)+\beta)^{-1}u.
\end{equation}
The next fundamental lemma relates Mosco convergence
to the (uniform) norm convergence of the resolvent operators.
\begin{lemma}\label{le:A1}
  Let $V_n,V\in \K$ and let us assume that $V_n\weakto V$ in $\Hilb.$
  Then for every $\beta>\lambda_{\rm min}$
  the associated resolvent operators converge, namely \[
    \rmR_{V_n}^\beta \rightarrow \rmR_V^\beta
    \qquad \text{in }\mathcal L(\Hilb)
    \quad\text{as $n\to +\infty$.}
\]
\end{lemma}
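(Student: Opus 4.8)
The plan is to upgrade the Mosco convergence $\E_{V_n}\to\E_V$ of Lemma~\ref{le:conteigapp} to convergence of the resolvents in the norm of $\mathcal L(\Hilb)$, by combining a uniform a priori bound with a $\Gamma$-convergence argument in which the linear datum is allowed to vary. Fix $\beta$ as in Definition~\ref{def:resolvent} and set $\kappa:=\lambda_{\rm min}+\beta>0$, so that $\E_W(v)+\beta|v|^2\ge\kappa|v|^2$ for every $W\in\K$, $v\in D(\E_W)$; for $f\in\Hilb$ and $W\in\K$ let
\[
  \Psi^f_W(v):=\tfrac12\E_W(v)+\tfrac\beta2|v|^2-\langle f,v\rangle,\qquad v\in\Hilb,
\]
with the convention $\Psi^f_W(v)=+\infty$ for $v\in\Hilb\setminus D(\E_W)$; by Definition~\ref{def:resolvent}, $\rmR_W^\beta f$ is the unique minimizer of $\Psi^f_W$ on $\Hilb$.

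First I would establish a uniform a priori bound: testing the minimality of $\rmR_{V_n}^\beta f$ against $0$ gives $\tfrac12\E_{V_n}(\rmR_{V_n}^\beta f)+\tfrac\beta2|\rmR_{V_n}^\beta f|^2\le\langle f,\rmR_{V_n}^\beta f\rangle$, whence $|\rmR_{V_n}^\beta f|\le 2|f|/\kappa$ and $\E_{V_n}(\rmR_{V_n}^\beta f)\le C|f|^2$; since $V_n\ge V_{\rm min}$, this bounds $\E(\rmR_{V_n}^\beta f)$, and then \eqref{eq:8} yields $\|\rmR_{V_n}^\beta f\|_\V\le C'|f|$ with $C'$ independent of $n$. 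By the compact embedding $\V\hookrightarrow\Hilb$, for every bounded $B\subset\Hilb$ the set $\{\rmR_{V_n}^\beta f:n\in\N,\ f\in B\}$ is relatively compact in $\Hilb$; moreover $\rmR_V^\beta$ is compact, hence maps weakly convergent sequences to strongly convergent ones.

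The second ingredient is a $\Gamma$-convergence with varying datum: if $f_n\weakto f$ weakly in $\Hilb$, with $R:=\sup_n|f_n|<\infty$, then $\Psi^{f_n}_{V_n}$ $\Gamma$-converges to $\Psi^f_V$ with respect to the weak topology of $\Hilb$, and these functionals are equi-coercive since $\Psi^{f_n}_{V_n}(v)\ge\tfrac\kappa2|v|^2-R|v|$ and $\V$ embeds compactly in $\Hilb$. For the $\Gamma$-limsup inequality I would use a Mosco recovery sequence $v_n\to v$ for $\E_{V_n}\to\E_V$ (condition (M2)), along which $\E_{V_n}(v_n)\to\E_V(v)$ and $\langle f_n,v_n\rangle\to\langle f,v\rangle$ because $f_n\weakto f$ and $v_n\to v$ strongly; for the $\Gamma$-liminf inequality, given $v_n\weakto v$, I would distinguish the case $\sup_n\E_{V_n}(v_n)<\infty$ (where the a priori bound forces $v_n\to v$ strongly and one concludes from condition (M1) for the forms together with $\langle f_n,v_n\rangle\to\langle f,v\rangle$) from the case $\E_{V_n}(v_n)\to+\infty$ (where $\Psi^{f_n}_{V_n}(v_n)\ge\tfrac\kappa2|v_n|^2-R|v_n|\to+\infty$). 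Since $\Psi^f_V$ is strictly convex, it follows that the minimizers $\rmR_{V_n}^\beta f_n$ of $\Psi^{f_n}_{V_n}$ converge weakly, and hence (by the relative compactness above) strongly, to the unique minimizer $\rmR_V^\beta f$ of $\Psi^f_V$.

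Finally I would conclude by contradiction. If $\|\rmR_{V_n}^\beta-\rmR_V^\beta\|_{\mathcal L(\Hilb)}$ did not tend to $0$, then along a subsequence there would be unit vectors $f_n$ with $|\rmR_{V_n}^\beta f_n-\rmR_V^\beta f_n|\ge\eps>0$; passing to a further subsequence $f_n\weakto f$, so $\rmR_V^\beta f_n\to\rmR_V^\beta f$ strongly by compactness of $\rmR_V^\beta$, while $\rmR_{V_n}^\beta f_n\to u$ strongly along a further subsequence by the first ingredient; but the second ingredient forces $u=\rmR_V^\beta f$, contradicting $|u-\rmR_V^\beta f|\ge\eps$. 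The step I expect to be the main obstacle is precisely this identification $u=\rmR_V^\beta f$: one cannot simply invoke the textbook equivalence between Mosco convergence and strong convergence of resolvents (cf.~\cite[Chapters~12,13]{DalMaso}), since the datum $f_n$ varies and converges only weakly, so the $\Gamma$-liminf inequality must be checked directly for competitors of possibly unbounded form energy, which is exactly where the uniform coercivity $\kappa>0$ enters.
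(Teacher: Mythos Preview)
Your argument is correct, with one slip: in the $\Gamma$-liminf case $\E_{V_n}(v_n)\to+\infty$ you invoke $\Psi^{f_n}_{V_n}(v_n)\ge\tfrac\kappa2|v_n|^2-R|v_n|$, but since $v_n\rightharpoonup v$ that lower bound stays bounded; the intended conclusion follows instead directly from $\Psi^{f_n}_{V_n}(v_n)=\tfrac12\E_{V_n}(v_n)+O(1)$, the remaining terms being bounded because $|v_n|$ and $|f_n|$ are.

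The paper takes a different, more operator-theoretic route: after choosing near-extremal unit vectors $f_n\rightharpoonup f$ it splits
\[
|\rmR_{V_n}^\beta f_n-\rmR_V^\beta f_n|\le|\rmR_{V_n}^\beta(f_n-f)|+|\rmR_{V_n}^\beta f-\rmR_V^\beta f|+|\rmR_V^\beta(f_n-f)|,
\]
disposes of the last term by compactness of $\rmR_V^\beta$, of the middle term by the standard Mosco $\Leftrightarrow$ strong-resolvent equivalence \cite[Theorem~13.12]{DalMaso}, and of the first via a self-adjointness duality: $\rmR_{V_n}^\beta(f_n-f)$ is bounded in $\V$ (hence relatively compact in $\Hilb$), and $\langle\rmR_{V_n}^\beta(f_n-f),g\rangle=\langle f_n-f,\rmR_{V_n}^\beta g\rangle\to0$ for every $g$ since $\rmR_{V_n}^\beta g\to\rmR_V^\beta g$ strongly. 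Your approach instead absorbs the moving datum directly into the $\Gamma$-convergence, which makes the proof self-contained (no separate appeal to \cite{DalMaso}) and does not use self-adjointness; the paper's splitting is shorter and its duality trick reduces the varying-datum piece to the fixed-datum one in a single line.
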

\begin{proof}
  We fix $\beta>\lambda_{\rm min}$.
  From the definition of operator convergence, for $t\geq 0$ fixed, we have \[
\|\rmR_{V_n}^\beta -\rmR_V^\beta \|_{\mathcal L(\Hilb)}=\sup_{|f|\leq 1}|\rmR_{V_n}^\beta (f)-\rmR_V^\beta (f)|\leq |\rmR_{V_n}^\beta (f_n)-\rmR_V^\beta (f_n)|+\frac1n,
\]
for a suitable
sequence $(f_n)\subset \Hilb$ with $|f_n|\leq 1$ and that we can assume to be weakly-$\Hilb$ converging to some $f\in \Hilb$.
We can then split
\[
  |\rmR_{V_n}^\beta (f_n)-\rmR_V^\beta (f_n)|\leq
  |\rmR_{V_n}^\beta (f_n-f)|+
  |\rmR^\beta_{V_n}(f)-\rmR_V^\beta (f)|+|\rmR_V^\beta (f_n)-\rmR_V^\beta (f)|.
\]
The last term is vanishing, as the resolvent operator is
continuous and $\|\rmR_V^\beta (f_n)\|_{\V}$ is uniformly bounded.

The second term is also infinitesimal thanks to
\cite[Theorem 13.12]{DalMaso}.
Concerning the first term, since
$\rmR_{V_n}^\beta (f_n-f) $ is uniformly bounded in $\V$, it is
sufficient to prove its weak convergence in $\Hilb$.
For every $g\in \Hilb$ we have
\begin{displaymath}
  \langle \rmR_{V_n}^\beta (f_n-f),g\rangle=
  \langle f_n-f, \rmR_{V_n}^\beta g\rangle\to 0
\end{displaymath}
since $\rmR_{V_n}^\beta g\to \rmR_V^\beta g$ strongly in $\Hilb$
and $f_n\weakto 0$.
In conclusion, $w=u=\rmR_V^\beta (f)$ and by the compact embedding of $\V$ into $\Hilb$, we conclude that \[
\rmR_{V_n}^\beta (f_n)\rightarrow \rmR_V^\beta (f),\qquad \text{strongly in }\Hilb,
\]
and the convergence holds for the whole sequence, since the limit is independent of the chosen subsequence.
\end{proof}

Eventually, thanks to the classical theory of linear operators, the
norm convergence of the operators implies the convergence of the
spectrum, see for example~\cite[Lemma XI.9.5]{dunford}.
Passing to the limit in the equation
\begin{displaymath}
  \rmR_{V_n}^\beta u_n=\Lambda_n u_n
\end{displaymath}
where $u_n$ is normalized sequence of eigenvalues
associated with a converging sequence $\Lambda_n$ 
and using the uniform boundedness of $\rmR_{V_n}^\beta$
and the compactness of the embedding of $\V$ in $\Hilb$
we can also prove the convergence (possibly up to subsequences)
of the eigenfunctions: this concludes the proof of Lemma~\ref{le:conteigpot}.

\section{Trace of symmetric operators}

Let $E\subset \Hilb$ be a subspace. 
We denote by $\Ort^k(E)$ the subset of orthonormal sets of $E^k$:
\begin{equation}
  \label{eq:27b}
  \Ort^k(E):=\Big\{\ww=(w_1,\cdots,w_k)\in E^k:\langle w_i,w_j\rangle =\delta_{ij}\Big\}.
\end{equation}
If $E$ has finite dimension $\dim(E)=d$ then we set $\Ort(E):=\Ort^d(E)$.
An orthogonal matrix $\sfQ\in \mathrm O(k)$ operates on $E^k$ 
by
\begin{equation}
  \label{eq:32}
  (\sfQ\ww)_j:=\sum_{i=1}^k \sfQ_{ij}w_j,\quad
  \ww=(w_1,\cdots,w_k)\in \Hilb^k,\quad j=1,\cdots,k.
\end{equation}
It is clear that if $\ww\in \Ort^k(E)$ then
also $\sfQ\ww\in \Ort^k(E)$ since
\begin{displaymath}
  \langle (\sfQ\ww)_i,(\sfQ\ww)_j\rangle
  =
  \langle \sum_{h=1}^k\sfQ_{hi}w_h,\sum_{l=1}^k\sfQ_{lj}w_l\rangle
  =
  \sum_{h,l=1}^k\sfQ_{hi}\sfQ_{lj}\langle w_h,w_l\rangle
  =
  \sum_{h,l=1}^k\sfQ_{hi}\sfQ_{lj}\delta_{hl}=
\sum_{h=1}^k\sfQ_{hi}\sfQ_{hj}=\delta_{ij}
\end{displaymath}
Let now $\calQ$ be a symmetric bilinear form on $E$
and let
$\sfQ\in \mathrm O(k)$ be an orthogonal matrix.
For every $\ww\in \Ort^k(E)$ with $\ww'=\sfQ\ww$ we have
\begin{equation}
  \label{eq:33}
  \sum_{h=1}^k \calQ(w_h,w_h)=
  \sum_{h=1}^k \calQ(w'_h,w_h').
\end{equation}
In fact
\begin{align*}
  \sum_{h=1}^k \calQ(w'_h,w_h')
  &=
    \sum_h\calQ\Big(\sum_{i}\sfQ_{ih}w_i, \sum_{j}\sfQ_{jh}w_j\Big)
    =\sum_h\sum_{i,j}\sfQ_{ih}\sfQ_{jh}\calQ(w_i, w_j)=
  \\&=\sum_{i,j}\calQ(w_i, w_j)\Big(\sum_h \sfQ_{ih}\sfQ_{jh}\Big)
  =\sum_{i,j}\calQ(w_i, w_j)\delta_{ij}=
  \sum_{h=1}^k \calQ(w_h,w_h).
\end{align*}
In particular, if $E$ has finite dimension $\dim(E)=d$ the quantity
\begin{equation}
  \label{eq:31}
  \mathrm{tr}_E(\calQ):=\sum_{h=1}^d\calQ(w_h,w_h),\quad\ww\in \Ort(E)
\end{equation}
is well defined and independent of the choice of $\ww\in \Ort(E)$.

A first application concerns the function 
\begin{equation}
  \label{eq:28}
  |\ww|^2(x):=\sum_{h=1}^d |w_h(x)|^2\quad x\in \sfD,\quad \ww\in \Ort(E)
\end{equation}
which is defined $\mm$-a.e.~in $\sfD$ and defines a quadratic form on $E$.
\begin{corollary}
  \label{cor:a1}
  If $E\subset \Hilb$ is a finite dimensional space with $\dim(E)=d$
  and
  $\ww',\ww''\in \Ort(E)$ then
  $|\ww'|^2(x)=|\ww''|^2(x)$ for $\mm$-a.e.~$x\in \sfD$.
\end{corollary}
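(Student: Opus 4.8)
The plan is to reduce the statement to the invariance identity \eqref{eq:33} established above, applied to the symmetric bilinear form on $E$ given by pointwise evaluation.

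First I would fix once and for all a basis of $E$ together with Borel representatives of its elements; since $\dim(E)=d<+\infty$, every $u\in E$ then acquires a canonical Borel representative (the corresponding linear combination), and for $\mm$-a.e.\ $x\in\sfD$ the value $u(x)$ is unambiguously defined. With these representatives, for each such $x$ the map $\calQ_x\colon E\times E\to\R$, $\calQ_x(u,v):=u(x)v(x)$, is a genuine symmetric bilinear form on the (abstract) $d$-dimensional space $E$.

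Next, recalling that any two orthonormal frames $\ww',\ww''\in\Ort(E)$ of the same finite dimensional inner product space $E$ differ by an orthogonal change of coordinates, I would write $\ww''=\sfQ\ww'$ for a suitable $\sfQ\in\mathrm O(d)$, with $\sfQ$ acting on $E^d$ as in \eqref{eq:32}. Then, applying \eqref{eq:33} with $\calQ=\calQ_x$, I obtain
\[
  |\ww''|^2(x)=\sum_{h=1}^d\calQ_x(w''_h,w''_h)=\sum_{h=1}^d\calQ_x(w'_h,w'_h)=|\ww'|^2(x)
\]
for every $x$ in the full-measure set on which all the chosen representatives are defined, which is exactly the claim.

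There is essentially no obstacle here; the only point requiring a little care is the measure-theoretic bookkeeping needed to make sense of pointwise evaluation of elements of $E$, which is harmless precisely because $\dim(E)<+\infty$. Alternatively one may bypass this entirely by integrating against test functions: for every $\varphi\in L^\infty(\sfD,\mm)$ the bilinear form $(u,v)\mapsto\int_\sfD\varphi\,uv\,\d\mm$ on $E$ is symmetric, so \eqref{eq:33} yields $\int_\sfD\varphi\,|\ww'|^2\,\d\mm=\int_\sfD\varphi\,|\ww''|^2\,\d\mm$ for all such $\varphi$, whence $|\ww'|^2=|\ww''|^2$ $\mm$-a.e.\ in $\sfD$.
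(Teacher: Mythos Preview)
Your proof is correct and follows essentially the same approach as the paper: both observe that $\ww''=\sfQ\ww'$ for some $\sfQ\in\mathrm O(d)$ and then invoke \eqref{eq:33} for the pointwise quadratic form $\calQ_x(u,v)=u(x)v(x)$ (which the paper already introduced just before the corollary via \eqref{eq:28}). Your additional care with Borel representatives and the alternative argument via test functions are nice elaborations, but the core idea is identical.
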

\begin{proof}
  Since $\ww',\ww''$ are orthonormal basis of $E$ there exists an
  orthogonal matrix $\mathsf Q\in \mathrm O(d)$ such that
  $\ww''=\sfQ\ww'$. It is then sufficient to apply \eqref{eq:33}.
\end{proof}

\section{Basic facts concerning non smooth differential calculus}
\label{sec:basic}
Let
$C$ be a compact metrizable topological space, let $f:C\to \Hilb$ be a continuous map with image
$R:=f(C)$,
and let $g:C\to \R$
be a lower semicontinuous map.
We denote by $\mathscr K(R)$ the space of compact
subsets of $R$.
We set
\begin{equation}
  \label{eq:87}
  F(v):=\min\Big\{\langle v,f(u)\rangle+g(u),\quad
  u\in C\Big\}\quad
  \text{for every }v\in \Hilb,
\end{equation}
and we denote by $M(v)$ the set of $u\in C$ where the minimum
in \eqref{eq:87} is attained.
\begin{lemma}
  \label{le:a1}
  $F$ is a Lipschitz concave function whose superdifferential is given by
  \begin{equation}
    \label{eq:88}
    \partial^+ F(v)=\cconv \big(f(M(v)));
  \end{equation}
  in particular, for every $\xi\in \partial^+ F(v)$ there exists a
  Borel probability measure $\mu\in \mathscr P(C)$ such that
  \begin{equation}
    \label{eq:91}
    \supp(\mu)\subset M(v),\quad
    \xi=\int_C f(u)\,\d\mu(u).
  \end{equation}
  The map $\partial^+F:\Hilb\to \mathscr K(R)$ is weakly-strongly upper
  semicontinuous and satisfies
  \begin{equation}
    \label{eq:90}
    v_n\rightharpoonup v,\quad
    \xi_n\in \partial^+ F(v_n)\quad
    \Rightarrow\quad
    \left\{
      \begin{aligned}
        &(\xi_n)_{n\in \N}\text{ is strongly relatively compact in }\Hilb,\\
        &\text{every limit point $\xi$ of $(\xi_n)_{n\in \N}$ belongs to }\partial^+ F(v).
      \end{aligned}
    \right.
  \end{equation}
  $F$ is Fr\'echet differentiable at $v_0$ if and only if
  $f(M(v_0))$ is a singleton.
\end{lemma}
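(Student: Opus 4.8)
The plan is to work from the representation of $F$ as the pointwise infimum of the affine maps $v\mapsto\langle v,f(u)\rangle+g(u)$, $u\in C$, and to combine elementary convex analysis with a barycentre argument and a Hahn--Banach separation. As preliminary observations: since $R=f(C)$ is compact, $L:=\sup_{u\in C}|f(u)|<\infty$, so each of these affine maps is $L$-Lipschitz; hence $F$, being their infimum, is concave and $L$-Lipschitz, and finite (bounded below by $-L|v|+\min_C g$, with $\min_C g>-\infty$ since $g$ is l.s.c.\ on the compact $C$). Each affine map is also continuous for the weak topology of $\Hilb$, so $F$ is weakly upper semicontinuous, which I will use below. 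Finally, $M(v)$ is nonempty (a minimum of an l.s.c.\ function on a compact set) and closed, hence compact, so $f(M(v))$ is a compact subset of $R$; by the classical fact that the closed convex hull of a compact subset of a Banach space is compact (\cite[Theorem~3.25]{rudin}), both $\cconv(f(M(v)))$ and $\cconv(R)$ are compact.

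To prove \eqref{eq:88} and \eqref{eq:91} at once, I would set $\mathcal B(v):=\big\{\int_C f\,\d\mu:\mu\in\mathscr P(C),\ \supp\mu\subset M(v)\big\}$ and establish the chain $\cconv(f(M(v)))\subset\mathcal B(v)\subset\partial^+ F(v)\subset\cconv(f(M(v)))$. The inclusion $\mathcal B(v)\subset\partial^+ F(v)$ is a one-line estimate: if $\xi=\int_C f\,\d\mu$ with $\supp\mu\subset M(v)$, then $g(u)=F(v)-\langle v,f(u)\rangle$ for $\mu$-a.e.\ $u$, so integrating $F(w)\le\langle w,f(u)\rangle+g(u)$ against $\mu$ gives $F(w)\le F(v)+\langle\xi,w-v\rangle$ for every $w$. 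The inclusion $\cconv(f(M(v)))\subset\mathcal B(v)$ holds because $\mathcal B(v)$ is convex, contains $f(M(v))$ (Dirac masses), and is strongly closed: it is the image of the weak-$*$ compact set of admissible measures under the barycentre map, which is affine and continuous from the weak-$*$ topology to the weak topology of $\Hilb$, hence $\mathcal B(v)$ is weakly compact, hence weakly closed and, being convex, strongly closed; the dual estimate $\langle\int_C f\,\d\mu,y\rangle\le\sup_{\cconv(f(M(v)))}\langle\cdot,y\rangle$ gives $\mathcal B(v)\subset\cconv(f(M(v)))$ as well, so $\mathcal B(v)=\cconv(f(M(v)))$ and the measure representation \eqref{eq:91} will follow from \eqref{eq:88}.

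The remaining inclusion $\partial^+ F(v)\subset K:=\cconv(f(M(v)))$ is the crux, and I would argue by contradiction. If $\xi\notin K$, Hahn--Banach separation provides $z\in\Hilb$ and $\varepsilon>0$ with $\langle\xi,z\rangle=\varepsilon+\sup_{\eta\in K}\langle\eta,z\rangle=:\varepsilon+s$. Since $f(M(v))\subset K$, the open set $U:=\{u\in C:\langle f(u),z\rangle<s+\varepsilon/4\}$ contains $M(v)$, so on the compact set $C\setminus U$ the l.s.c.\ function $u\mapsto\langle v,f(u)\rangle+g(u)$ attains a minimum strictly larger than $F(v)$. Splitting the infimum defining $F(v-tz)$ over $U$ and over $C\setminus U$ then yields $F(v-tz)\ge F(v)-t(s+\varepsilon/4)$ for all small $t>0$, which contradicts the supergradient inequality $F(v-tz)\le F(v)-t\langle\xi,z\rangle=F(v)-t(s+\varepsilon)$. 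This Danskin-type localisation --- genuinely using the l.s.c.\ of $g$ and the compactness of $C$ to control the infimum near $M(v)$ --- is the step I expect to be the main obstacle.

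It then remains to derive \eqref{eq:90} and the differentiability criterion. By \eqref{eq:88}, $\partial^+ F(v)\subset\cconv(R)$ for all $v$, and $\cconv(R)$ is strongly compact, which gives the strong relative compactness of $(\xi_n)$; if $v_n\weakto v$, $\xi_n\in\partial^+ F(v_n)$ and $\xi_{n_k}\to\xi$ strongly along a subsequence, passing to the limit in $F(w)\le F(v_{n_k})+\langle\xi_{n_k},w\rangle-\langle\xi_{n_k},v_{n_k}\rangle$ --- using $\langle\xi_{n_k},v_{n_k}\rangle\to\langle\xi,v\rangle$ (strong times weak), $\langle\xi_{n_k},w\rangle\to\langle\xi,w\rangle$, and $\liminf_k F(v_{n_k})\le F(v)$ from weak upper semicontinuity --- gives $\xi\in\partial^+ F(v)$, proving the weak--strong upper semicontinuity. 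Finally, if $f(M(v_0))=\{\xi_0\}$ then $\partial^+ F(v_0)=\{\xi_0\}$ by \eqref{eq:88}; choosing $\xi_w\in\partial^+ F(w)$ and using concavity twice gives $-|\xi_w-\xi_0|\,|w-v_0|\le F(w)-F(v_0)-\langle\xi_0,w-v_0\rangle\le0$, and since $\xi_w\to\xi_0$ as $w\to v_0$ by the upper semicontinuity just shown, the difference quotient tends to $0$, so $F$ is Fr\'echet differentiable at $v_0$. Conversely, G\^ateaux (a fortiori Fr\'echet) differentiability of the concave $F$ at $v_0$ forces $\partial^+ F(v_0)$, hence $f(M(v_0))\subset\partial^+ F(v_0)$, to be a singleton.
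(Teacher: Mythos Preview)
Your proof is correct and covers all the claims, but your route differs from the paper's in two places worth noting.

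For the reverse inclusion $\partial^+F(v)\subset\cconv(f(M(v)))$, both arguments start from a Hahn--Banach separation; the paper then invokes upper semicontinuity of the minimiser map (``by a standard compactness argument, $f(M(w))\subset H(\eta,\alpha)$ for $w$ near $v$'') and produces a single point $w=v+\varepsilon\eta$ with $u\in M(w)$ to get the contradiction. Your Danskin-type localisation---splitting the infimum over $U$ and $C\setminus U$ and estimating $F(v-tz)$ directly---avoids explicitly invoking u.s.c.\ of $M$ and is slightly more self-contained, but the underlying compactness idea is the same.

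The genuine methodological difference is in the weak--strong upper semicontinuity \eqref{eq:90}. The paper exploits the measure representation: writing $\xi_n=\int_C f\,\d\mu_n$ with $\supp\mu_n\subset M(v_n)$, it extracts a weak-$*$ convergent subsequence $\mu_{n(k)}\to\mu$ in $\mathscr P(C)$, checks $\supp\mu\subset M(v)$ by passing to the limit in the minimality condition, and concludes $\xi_{n(k)}\to\int_C f\,\d\mu\in\partial^+F(v)$ \emph{strongly}. You instead observe that $F$ is weakly upper semicontinuous (as an infimum of weakly continuous affine maps), get relative compactness from $\partial^+F(v_n)\subset\cconv(R)$, and pass to the limit directly in the supergradient inequality using strong-times-weak convergence of $\langle\xi_{n_k},v_{n_k}\rangle$. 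Your argument is more elementary and does not need the integral representation at that stage; the paper's argument, on the other hand, simultaneously yields that strong limit points are themselves barycentres of admissible measures, which is what it needs downstream. Your separate treatment of \eqref{eq:91} via the barycentre set $\mathcal B(v)$ is in fact more explicit than the paper's one-line appeal to ``continuity of $f$ and Krein--Milman''. The differentiability criterion is handled the same way in both proofs.
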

\begin{proof}
If $\xi=f(u)$ for some $u\in M(v)$ we have
\begin{align*}
  F(w)-F(v)\le \langle w,f(u)\rangle- g(u)-\big(\langle
  v,f(u)\rangle-g(u)\big)
  =\langle w-v,f(u)\rangle=
  \langle w-v,\xi\rangle
\end{align*}
showing that $\xi\in \partial^+F(v)$. It follows that
$f(M(v))\subset \partial^+F(v)$ and therefore also
$\cconv\big(f(M(v))\big)\subset \partial^+F(v)$.

Let us now prove that if $\xi\not\in \cconv\big(f(M(v))\big)$ then
$\xi\not\in \partial^+F(v)$. Since $\cconv\big(f(M(v))\big)$ is a
compact convex set, we can apply the second geometric form of
Hahn-Banach theorem and find $\eta\in \Hilb$ with $|\eta|=1$ and
$\alpha\in \mathbb R$ such that
\begin{displaymath}
  \langle \eta,\xi\rangle<\alpha< \min_{u\in M(v)}\langle \eta,f(u)\rangle,
\end{displaymath}
i.e.~the compact set $f(M(v))$ is contained in the open set
$H(\eta,\alpha):= \big\{x\in \Hilb:\langle
\eta,x\rangle>\alpha\big\}$.  By a standard compactness argument, we
can find $\varepsilon>0$ such that for every
$w\in \mathrm B(v,2\varepsilon)$ $f(M(w))\subset H(\eta,\alpha)$.
Choosing $w:=v+\varepsilon \eta$ and $u\in M(w)$ so that
$f(u)\in H(\eta,\alpha)$ we get
\begin{align*}
  \langle w-v,\xi\rangle
  &=\varepsilon \langle \eta,\xi\rangle
    <\eps\alpha<
    \eps \langle \eta,f(u)\rangle
    =\langle w-v,f(u)\rangle
  \\&=\langle w,f(u)\rangle-g(u)-
  \big(\langle v,f(u)\rangle-g(u)\big)
  =F(w)-\big(\langle v,f(u)\rangle-g(u)\big)
  \le F(w)-F(v)
\end{align*}
which shows that $\xi\not\in \partial^+F(v)$.

The representation \eqref{eq:91} is an immediate consequence of the
continuity of $f$ and the Krein-Milman Theorem.

Let now suppose that $v_n\weakto v$ in $\Hilb$ and let $\xi_n\in \partial^+F(v_n)$;
we can find a Borel probability measure $\mu_n$ on $C$ such that 
\begin{displaymath}
  \supp(\mu_n)\subset M(v_n),
  \quad
  \xi_n=\int_C f(u)\,\d\mu_n(u).
\end{displaymath}
Since $C$ is compact and metrizable, we can find a subsequence
$k\mapsto n(k)$ and a limit measure $\mu$ such that
$\mu_{n(k)}\to\mu$ weakly in $\mathscr P(C)$.
For every point $u$ of the support of $\mu$ there exists a sequence of
points
$u_n\in \supp(\mu_n)\subset M(v_n)$ converging to $u$; passing to the
limit in the family of inequalities
\begin{displaymath}
  \langle v_n,f(u_n)\rangle+g(u_n)\le \langle
  v_n,f(w)\rangle+g(w)\quad\text{for every }w\in C,
\end{displaymath}
we get
\begin{displaymath}
  \langle v,f(u)\rangle+g(u)\le \langle
  v,f(w)\rangle+g(w)\quad\text{for every }w\in C,
\end{displaymath}
so that $u\in M(v)$. It follows that setting
\begin{displaymath}
  \xi:=\int_C f(u)\,\d\mu(u)\in \partial^+ F(v)
\end{displaymath}
we then conclude that $\xi_{n(k)}\to \xi$ strongly in $\Hilb$ as
$k\to\infty$.

Concerning the Fr\'echet differential of $F$, it is obvious that
if $F$ is differentiable at $v_0$ then $\partial^+F(v_0)$ reduces
to a singleton. To prove the converse property,
let $\xi_0$ be the unique element of $f(M(v_0))$:
we have just to show that $\xi_0\in \partial^-F(v_0)$.
By \eqref{eq:90}, for every $\eps>0$ we can find $\delta>0$ such that
\begin{displaymath}
  f(M(w))\subset \mathrm B(\xi_0,\eps)\quad\text{for every }w\in
  \mathrm B(v_0,\delta).
\end{displaymath}
For every $w\in \rmB(v_0,\delta)$ and $\xi\in f(M(w))$ we thus have
\begin{displaymath}
  F(w)-F(v)-
  \langle \xi_0,w-v\rangle\ge \langle \xi-\xi_0,w-v\rangle\ge
  -|\xi-\xi_0|\cdot |w-v|\ge -\eps|w-v|
\end{displaymath}
which shows that $\xi_0\in \partial^-F(v_0)$.
\end{proof}



\begin{thebibliography}{10}

\bibitem{ags}
Luigi Ambrosio, Nicola Gigli, and Giuseppe Savar\'{e}.
\newblock {\em Gradient flows in metric spaces and in the space of probability
  measures}.
\newblock Lectures in Mathematics ETH Z\"{u}rich. Birkh\"{a}user Verlag, Basel,
  second edition, 2008.

\bibitem{attouch}
Hedy Attouch.
\newblock {\em Variational convergence for functions and operators}.
\newblock Applicable Mathematics Series. Pitman (Advanced Publishing Program),
  Boston, MA, 1984.

\bibitem{bhr}
Henri Berestycki, Fran\c{c}ois Hamel, and Lionel Roques.
\newblock Analysis of the periodically fragmented environment model. {I}.
  {S}pecies persistence.
\newblock {\em J. Math. Biol.}, 51(1):75--113, 2005.

\bibitem{Borwein-Zhu99}
Jonathan~M. Borwein and Qiji~J. Zhu.
\newblock A survey of subdifferential calculus with applications.
\newblock {\em Nonlinear Anal.}, 38(6, Ser. A: Theory Methods):687--773, 1999.

\bibitem{Brezis73}
H\"aim Br\'{e}zis.
\newblock {\em Op\'{e}rateurs maximaux monotones et semi-groupes de
  contractions dans les espaces de {H}ilbert}.
\newblock North-Holland Mathematics Studies, No. 5. North-Holland Publishing
  Co., Amsterdam-London; American Elsevier Publishing Co., Inc., New York,
  1973.

\bibitem{bbs}
Dorin Bucur, Giuseppe Buttazzo, and Ulisse Stefanelli.
\newblock Shape flows for spectral optimization problems.
\newblock {\em Interfaces Free Bound.}, 14(4):521--544, 2012.

\bibitem{bdm}
Giuseppe Buttazzo and Gianni Dal~Maso.
\newblock An existence result for a class of shape optimization problems.
\newblock {\em Arch. Rational Mech. Anal.}, 122(2):183--195, 1993.

\bibitem{bgrv}
Giuseppe Buttazzo, Augusto Gerolin, Berardo Ruffini, and Bozhidar Velichkov.
\newblock Optimal potentials for {S}chr\"{o}dinger operators.
\newblock {\em J. \'{E}c. polytech. Math.}, 1:71--100, 2014.

\bibitem{cc}
Robert~Stephen Cantrell and Chris Cosner.
\newblock Diffusive logistic equations with indefinite weights: population
  models in disrupted environments.
\newblock {\em Proc. Roy. Soc. Edinburgh Sect. A}, 112(3-4):293--318, 1989.

\bibitem{Babuska-Osborn}
Philippe~G. Ciarlet and Jacques-Louis Lions, editors.
\newblock {\em Handbook of numerical analysis. {V}ol. {II}}.
\newblock Handbook of Numerical Analysis, II. North-Holland, Amsterdam, 1991.
\newblock Finite element methods. Part 1.

\bibitem{Clarke}
Francis~H. Clarke, Yuri~S. Ledyaev, Ron~J. Stern, and Peter~R. Wolenski.
\newblock {\em Nonsmooth analysis and control theory}, volume 178 of {\em
  Graduate Texts in Mathematics}.
\newblock Springer-Verlag, New York, 1998.

\bibitem{DaPrato}
Giuseppe Da~Prato and Jerzy Zabczyk.
\newblock {\em Second order partial differential equations in {H}ilbert
  spaces}, volume 293 of {\em London Mathematical Society Lecture Note Series}.
\newblock Cambridge University Press, Cambridge, 2002.

\bibitem{DalMaso}
Gianni Dal~Maso.
\newblock {\em An introduction to {$\Gamma$}-convergence}, volume~8 of {\em
  Progress in Nonlinear Differential Equations and their Applications}.
\newblock Birkh\"{a}user Boston, Inc., Boston, MA, 1993.

\bibitem{davies}
Edward~B. Davies.
\newblock {\em Heat kernels and spectral theory}, volume~92 of {\em Cambridge
  Tracts in Mathematics}.
\newblock Cambridge University Press, Cambridge, 1990.

\bibitem{Dogan-Morin-Nochetto-Verani}
G\"unay Do\v{g}an, Pedro Morin, Ricardo~H. Nochetto, and Marco Verani.
\newblock Discrete gradient flows for shape optimization and applications.
\newblock {\em Comput. Methods Appl. Mech. Engrg.}, 196(37-40):3898--3914,
  2007.

\bibitem{dunford}
Nelson Dunford and Jacob~T. Schwartz.
\newblock {\em Linear operators. {P}art {II}}.
\newblock Wiley Classics Library. John Wiley \& Sons, Inc., New York, 1988.
\newblock Spectral theory. Selfadjoint operators in Hilbert space, With the
  assistance of William G. Bade and Robert G. Bartle, Reprint of the 1963
  original, A Wiley-Interscience Publication.

\bibitem{fukushima}
Masatoshi Fukushima, Yoichi Oshima, and Masayoshi Takeda.
\newblock {\em Dirichlet forms and symmetric {M}arkov processes}, volume~19 of
  {\em De Gruyter Studies in Mathematics}.
\newblock Walter de Gruyter \& Co., Berlin, extended edition, 2011.

\bibitem{henrot2}
Antoine Henrot, editor.
\newblock {\em Shape optimization and spectral theory}.
\newblock De Gruyter, 2017.

\bibitem{hpnew}
Antoine Henrot and Michel Pierre.
\newblock {\em Shape variation and optimization}, volume~28 of {\em EMS Tracts
  in Mathematics}.
\newblock European Mathematical Society (EMS), Z\"{u}rich, 2018.
\newblock A geometrical analysis, English version of the French publication [
  MR2512810] with additions and updates.

\bibitem{hiye}
Jean-Baptiste Hiriart-Urruty and Dong Ye.
\newblock Sensitivity analysis of all eigenvalues of a symmetric matrix.
\newblock {\em Numer. Math.}, 70(1):45--72, 1995.

\bibitem{krmo}
Alexander~Ja. Kruger and Boris~Sh. Mordukhovich.
\newblock Extremal points and the {E}uler equation in nonsmooth optimization
  problems.
\newblock {\em Dokl. Akad. Nauk BSSR}, 24(8):684--687, 763, 1980.

\bibitem{llnp}
Jimmy Lamboley, Antoine Laurain, Gr\'{e}goire Nadin, and Yannick Privat.
\newblock Properties of optimizers of the principal eigenvalue with indefinite
  weight and {R}obin conditions.
\newblock {\em Calc. Var. Partial Differential Equations}, 55(6):Art. 144, 37,
  2016.

\bibitem{moa}
Albert~W. Marshall, Ingram Olkin, and Barry~C. Arnold.
\newblock {\em Inequalities: theory of majorization and its applications}.
\newblock Springer Series in Statistics. Springer, New York, second edition,
  2011.

\bibitem{mor1}
Boris~Sh. Mordukhovich.
\newblock Maximum principle in the problem of time optimal response with
  nonsmooth constraints.
\newblock {\em Prikl. Mat. Meh.}, 40(6):1014--1023, 1976.

\bibitem{mor}
Boris~Sh. Mordukhovich.
\newblock Nonsmooth analysis with nonconvex generalized differentials and
  conjugate mappings.
\newblock {\em Dokl. Akad. Nauk BSSR}, 28(11):976--979, 1984.

\bibitem{ovwo}
Michael~L. Overton and Robert~S. Womersley.
\newblock Optimality conditions and duality theory for minimizing sums of the
  largest eigenvalues of symmetric matrices.
\newblock {\em Math. Programming}, 62(2, Ser. B):321--357, 1993.

\bibitem{rowe}
R.~Tyrrell Rockafellar and Roger J.-B. Wets.
\newblock {\em Variational analysis}, volume 317 of {\em Grundlehren der
  mathematischen Wissenschaften [Fundamental Principles of Mathematical
  Sciences]}.
\newblock Springer-Verlag, Berlin, 1998.

\bibitem{rs}
Riccarda Rossi and Giuseppe Savar\'{e}.
\newblock Gradient flows of non convex functionals in hilbert spaces and
  applications.
\newblock {\em ESAIM Control Optim. Calc. Var.}, 12(3):564--614, 2006.

\bibitem{rudin}
Walter Rudin.
\newblock {\em Functional analysis}.
\newblock McGraw-Hill Series in Higher Mathematics. McGraw-Hill Book Co., New
  York-D\"{u}sseldorf-Johannesburg, 1973.

\end{thebibliography}

\end{document}